\def\specialsection{\@startsection{section}{1}%
  \z@{\linespacing\@plus\linespacing}{.5\linespacing}%
  {\normalfont}}
\def\section{\@startsection{section}{1}%
  \z@{.7\linespacing\@plus\linespacing}{.5\linespacing}%
  {\normalfont\scshape}}
\patchcmd{\section}{\scshape}{\bfseries}{}{}
\renewcommand{\@secnumfont}{}
\def\subsection{\@startsection{subsection}{1}%
  \z@{.5\linespacing\@plus.7\linespacing}{-.5em}%
  {\normalfont\itshape}}
	\def\@sect#1#2#3#4#5#6[#7]#8{%
  	\edef\@toclevel{\ifnum#2=\@m 0\else\number#2\fi}%
  	\ifnum #2>\c@secnumdepth \let\@secnumber\@empty
  	\else \@xp\let\@xp\@secnumber\csname the#1\endcsname\fi
  	\@tempskipa #5\relax
  	\ifnum #2>\c@secnumdepth
  	  \let\@svsec\@empty
  	\else
  	  \refstepcounter{#1}%
    \edef\@secnumpunct{%
      \ifdim\@tempskipa>\z@ 
        \@ifnotempty{#8}{.\@nx\enspace}%
      \else
        \@ifempty{#8}{.}{.\@nx\enspace}%
      \fi
    }%
    \@ifempty{#8}{%
      \ifnum #2=\tw@ \def\@secnumfont{\bfseries}\fi}{}%
    \protected@edef\@svsec{%
      \ifnum#2<\@m
        \@ifundefined{#1name}{}{%
          \ignorespaces\csname #1name\endcsname\space
        }%
      \fi
      \@seccntformat{#1}%
    }%
  \fi
  \ifdim \@tempskipa>\z@ 
    \begingroup #6\relax
    \@hangfrom{\hskip #3\relax\@svsec}{\interlinepenalty\@M #8\par}%
    \endgroup
    \ifnum#2>\@m \else \@tocwrite{#1}{#8}\fi
  \else
  \def\@svsechd{#6\hskip #3\@svsec
    \@ifnotempty{#8}{\ignorespaces#8\unskip
       }%
    \ifnum#2>\@m \else \@tocwrite{#1}{#8}\fi
  }%
  \fi
  \global\@nobreaktrue
  \@xsect{#5}}
\def\abstract#1{ \gdef\@abstract{#1}}
\def\@abstract{\@latex@error{No \noexpand\abstract given}\@ehc}
\def\keywords#1{ \gdef\@keywords{#1}}
\def\@keywords{\@latex@error{No \noexpand\keywords given}\@ehc}
\def\MSC#1{ \gdef\@MSC{#1}}
\def\@MSC{\@latex@error{No \noexpand\MSC given}\@ehc}
\def\abstract#1{ \gdef\@abstract{#1}}
\def\@abstract{\@latex@error{No \noexpand\abstract given}\@ehc}
\newcommand{\Div}{\mathrm{div}}
\newcommand{\curl}{\mathrm{curl}\,}
\newcommand{\ee}{\varepsilon}
\newcommand{\Cc}{\mathcal{C}}
\newcommand{\dd}{\mathrm{d}}
\newcommand{\pre}{ \mathrm{p}}
\newcommand{\uu}{ \mathsf{u}}
\newcommand{\s}{ \mathrm{s}}
\newcommand{\e}{ \mathrm{e}}
\newcommand{\Tt}{\vartheta}
\newcommand{\I}{\mathcal{I}}
\newcommand{\EE}{\mathcal{E}}
\newcommand{\FF}{\mathcal{F}}
\newcommand{\DD}{\mathfrak{D}}
\newcommand{\RR}{\mathbb{R}}
\newcommand{\ZZ}{\mathbb{Z}}
\newcommand{\R}{\mathbb{R}}
\newcommand{\BB}{\dot{B}}
\newcommand{\Hh}{\dot{H}}
\newcommand{\Dd}{\dot{\Delta}}
\newcommand{\Sd}{\dot{S}}
\newcommand{\Rd}{\dot{R}}
\newcommand{\Th}{\dot{T}}
\newcommand{\hra}{\hookrightarrow}
\newcommand{\rhu}{\rightharpoonup}
\newcommand{\loc}{\textnormal{loc}}
\newcommand{\NS}{Navier-Stokes }
\newcommand{\dx}{\textnormal{d}{x}}
\renewcommand{\d}{\textnormal{d}}
\renewcommand{\div}{\textnormal{div}}
\newcommand{\pare}[1]{\left( #1 \right)}
\newcommand{\angles}[1]{\left\langle #1 \right\rangle}
\newcommand{\norm}[1]{\left\| #1 \right\|}
\newcommand{\av}[1]{\left| #1 \right|}
\newcommand{\bra}[1]{\left[ #1 \right]}
\newcommand{\set}[1]{\left\{ #1 \right\}}
\newcommand{\psc}[2]{\left\langle \left. #1 \right| #2\right\rangle}
\newcommand{\cP}{\mathcal{P}}
\newcommand{\Ft}[1]{\mathcal{F}_{x, t} #1}%
\newcommand{\cC}{\mathcal{C}}
\newcommand{\cQ}{\mathcal{Q}}
\newcommand{\cF}{\mathcal{F}}
\newcommand{\cJ}{\mathcal{J}}
\newcommand{\cD}{\mathcal{D}}
\newcommand{\bR}{\mathbb{R}}
\newcommand{\bN}{\mathbb{N}}
\newcommand{\cG}{\mathcal{G}}
\newtheorem{theorem}{Theorem}[section]
\newtheorem{prop}[theorem]{Proposition}
\newtheorem{lemma}[theorem]{Lemma}
\newtheorem{definition}[theorem]{Definition}
\DeclareFontFamily{OT1}{rsfs}{}
\DeclareFontShape{OT1}{rsfs}{m}{n}{ <-7> rsfs5 <7-10> rsfs7 <10-> rsfs10}{}
\DeclareMathAlphabet{\mycal}{OT1}{rsfs}{m}{n}
\DeclareSymbolFont{letters}{OML}{ztmcm}{m}{it}
\begin{document}
 \begin{center}%
  \noindent\rule{155mm}{0.01cm}
  \let \footnote \thanks
  {\LARGE  \textsc{A global well-posedness result for the Rosensweig system of ferrofluids} \par}%
  \noindent\rule{155mm}{0.01cm}
  \vskip 1.em
  \today	
 \end{center}
 \vskip 1.em%
 {\let \footnote \thanks  Francesco De Anna$\,^1$,$\hspace{0.2cm}$Stefano Scrobogna$\,^2$ \par}
 {\textit{\footnotesize{$\,^1$ Department of Mathematics, Pennsylvania State University, University Park, PA 16802, US\\
			e-mail: fzd16@psu.edu\vspace{0.1cm} \\			
			$\,^2\,$ Basque Center for Applied Mathematics, Mazarredo 14, 48009, Bilbao, Spain\\
			e-mail: sscrobogna@bcamath.org } } }
 \par
 \vskip 1.em
 \noindent\rule{155mm}{0.01cm} 
 \vskip 1.em
 {\small
  \begin{minipage}{0.3\linewidth}
  \noindent\hspace{-0.6cm} \textsc{article info}
  
  \noindent\hspace{-0.4cm}\rule{4.3cm}{0.01cm} 
  
  \end{minipage}
  \begin{minipage}{0.6\linewidth}
   \noindent \noindent \textsc{ abstract}
      
   \noindent\rule{10.32cm}{0.01cm}

  \end{minipage}
	
  \smallskip
  \hspace{-0.4cm}
  \begin{minipage}{0.29\linewidth}
  
  \noindent \hspace{-0.0cm}\textit{Keywords: }
  Ferrofluids, fractional time derivative,  global well-posedness, long time behavior, decay estimates, regularity propagation.
  
  \vspace{0.4cm}
  \noindent \hspace{-0.0cm}\textit{MSC:} 	35Q30,	76A05, 	76W99. 
  
  \end{minipage}
  \hspace{0.34cm}
  \begin{minipage}{0.6\linewidth}
  
  \noindent $$\,$$

   In this Paper we study a Bloch-Torrey regularization of the Rosensweig system for ferrofluids. The scope of this paper is twofold. First of all, we investigate 
   the existence and uniqueness of solutions \`a la Leray of this model in the whole space $\RR^2$. Interesting enough, the well-posedness relies on a variation of the Aubin-Lions lemma for fractional time derivatives. 
   
   In the second part of this paper we investigate both the long-time behavior of weak solutions and the propagation of Sobolev regularities in dimension two
   
   $\,$
  \end{minipage}
  
  \vspace{0.1cm}
  \noindent\rule{155mm}{0.01cm} 
  } 	
\makeatother
\allowdisplaybreaks{}

\tableofcontents
\section{Introduction}

\noindent
The purpose of the present work is the mathematical study of a model describing the hydrodynamics of ferrofluids, under the action of an applied magnetic field. Ferrofluids are complex liquids presenting a strong magnetization under the action of a magnetic field.
The motion of these fluids is complicated by the existence of mesoscale or sub-domain structures: a carrier fluids (water, oil or other organic solvent) surround a composed of nanoscale particles, such as of magnetite and hermatite, or some other compounds of iron.

\smallskip\noindent
Ferrofluids are purely artificial, and they were invented by the NASA program in 1963 \cite{step}. The drive to create such a magnetic liquid was to control and direct rocket fuel in outer space.
Indeed, the absence of gravity leaded the fuel to float in the holding tank and it was therefore a challenge to pump the fuel efficiently into the rocket engine. The pioneristic work of Papell allowed to convert the nonmagnetic rocket fuel into a fuel having magnetic properties so that it could be controlled under zero gravity by powerful magnets.

\smallskip\noindent
Unlike solids and Newtonian liquids, the model equations for ferrofluids continue to evolve as new experimental evidence become available. New mathematical descriptions and self-consistent theories  resolve the ensemble of micro-elements, their intermolecular and distortional elastic interactions, the coupling between the hydrodynamics and the applied electric and magnetic fields. 
Thus, there exist several models describing the hydrodynamics of homogeneous micropolar fluids \cite{Nochetto, Shil}, and we investigate a regularization of Bloch-Torrey type \cite{Torrey} for the Rosensweig model \cite{Neuringer, Stefano1, Stefano2}. For the sake of unified presentation, it is convenient to introduce some terminology. The local magnetizing field (i.e. the dipole moment per unit volume) is described through a time-dependent function $M$  taking values from a domain of $\RR^3$ into the three dimensional vector space $\RR^3$ . The description of the ferrohydrodynamic interaction is driven by the equation governing the magnetic field $B$ and the demagnetizing field $H$, the Gauss law and the  Amp\'ere's law
\begin{equation*}
	\Div\,B(t,\,x)\,=\,F\quad\quad \text{and}\quad\quad \curl\, H(t,\,x)\,=\,0
\end{equation*}
with $H$ and $B$ vector functions of $(t,\,x)\in \RR_+\times\Omega$ and $F\,=\,F(t,\,x)$ the external magnetic field applied to the system. We consider here nonconduting ferrofluids for which the current density is null. Furthermore the relation connecting $B$ and $H$ is
\begin{equation*}
	B\,=\,\mu_0(\,M\,+\,H\,)
\end{equation*}
where $\mu_0$ is the permeability of vacuum. For monodispersion of spherical particles, the spin can be expressed in terms of the inertia $k>0$ per unit volume and the the average angular velocity $\Omega\,=\,\Omega(t,\,x)\in\RR^3$ of particles about their own center. 

\smallskip\noindent
The velocity $\uu\,=\,\uu(t,\,x)$ of centres of masses of particles obeys a forced incompressible Navier-Stokes system, with an additional stress tensor, a forcing term modelling the effect that the magnetic field
interacts on the dynamics of the centres of masses of the particles. The final system is then a combination of the Navier-Stokes equation, the magnetization equation and the magnetostatic equation.
Explicitly the equations, in non-dimensional form, are \cite{Amirat6, Rosenweig}:
\begin{equation}\label{main_system-init}
\begin{cases}
	\vspace{3pt}
	\;\rho_ 0\big(\partial_t \uu\,+\uu\cdot\nabla \uu\,\big)\,-\,(\eta +\zeta)\,\Delta \uu\,+\,\nabla \pre\,= \,\mu_0\, M\cdot \nabla H\,+2\zeta\, \curl \Omega,\\
	\vspace{3pt}
	\;\rho_0 k\big( \partial_t \Omega\,+\uu\cdot \nabla \Omega\,\big) - \eta'\Delta \Omega \,-\lambda' \nabla \Div\,\Omega\,=\,\mu_0 M\times H\,+\,2\zeta (\,\curl \uu \,-\,2\Omega\,), \\
	\vspace{3pt}
	\;\partial_t M \,+\,\uu\cdot \nabla M - \sigma \Delta M\,=\,\Omega \times M - \frac{1}{\tau}(\,M-\chi_0H\,),\\
	\vspace{3pt}
	\;\Div\,(\,M+H\,)\,=\,F,\\
	\vspace{3pt}
	\;\Div\,\uu\,=\,\curl\,H \,=\,0,	
\end{cases}
\end{equation}
supported by the initial condition:
\begin{equation*}
	(\uu,\,\Omega,\,M,\,H)_{|t=0}\,=\,(\uu_0,\,\Omega_0,\,M_0,\,H_0),
\end{equation*}
where $\uu_0$ is free-divergent, $H_0$ has null curl and the couple $(M_0,\,H_0)$ also satisfies the Amp\'ere's law
$
	\Div\,(\,H_0(x)\,+\,M_0(x)\,)\,=\,F(0,\,x).
$
The equations are defined in the whole three-dimensional space $\RR^3\times \RR_+$, and we assume that
\begin{equation*}
	F\,=\,F(x_1,\,x_2,\,t),
\end{equation*}
which corresponds to the case of a magnetic field independent of the vertical variable. Under this hypothesis we introduce the specific family of solutions of the form
\begin{equation*}
\begin{aligned}
	\uu \,&=\,(\uu(x_1,\,x_2,\,t),\,\uu_2(x_1,\,x_2,\,t),\,0), \\
	\Omega\,&=\,(\,0,\,0,\,\omega(\,x_1,\,x_2,\,t)\,),\\
	M\,&=\,(M_1(x_1,\,x_2,\,t),\,M_2(x_1,\,x_2,\,t),\,0).
\end{aligned}
\end{equation*}
Under these hypotheses, we recast the system \eqref{main_system-init} in the bidimensional form (cf. \cite{Stefano1})
\begin{equation}\label{main_system}
\begin{cases}
	\vspace{3pt}
	\;\rho_ 0\big(\partial_t \uu\,+\uu\cdot\nabla \uu\,\big)\,-\,(\eta +\zeta)\,\Delta \uu\,+\,\nabla \pre\,= \,\mu_0\, M\cdot \nabla H\,
												+\,2\zeta\, \left(\,\begin{matrix}\hspace{0.25cm}\partial_2  \omega\\-\partial_1 \omega	\end{matrix}\,\right),\\
	\vspace{3pt}
	\;\rho_0 k\big( \partial_t \omega\,+\uu\cdot \nabla \omega\,\big) - \eta'\Delta \omega\,=\,\mu_0\, M\times H\,+\,2\zeta (\,\curl \uu \,-\,2\omega\,), \\
	\vspace{3pt}
	\;\partial_t M \,+\,\uu\cdot \nabla M - \sigma \Delta M\,=\,\left(\,\begin{matrix}\hspace{0.25cm}M_2\\-M_1	\end{matrix}\,\right)\omega - \frac{1}{\tau}(\,M-\chi_0H\,),\\
	\vspace{3pt}
	\;\Div\,(\,M+H\,)\,=\,F,\\
	\vspace{3pt}
	\;\Div\,\uu\,=\,\curl\,H \,=\,0.	
\end{cases}
\end{equation}
Here, by an abuse of notation, we identify vector products and the curl operator by means of
\begin{equation*}
	M\times H\,=\,M_1\,H_2\,-\,H_1\,M_2\quad\text{and}\quad \curl\, \uu\,=\, \partial_1 \uu_2\,-\,\partial_2\,\uu_1,
\end{equation*}
respectively.

\noindent
Our analysis relies basically on energy estimates, so that we will look for weak solutions \`a la Leray, which are defined in a rather standard
manner:
\begin{definition}\label{def-weaksol}
	We say that $(\uu,\,\omega,\,M,\,H)$ is a weak solution of problem \eqref{main_system} if the conditions below are satisfied
	\begin{itemize}
		\item[(i)] 	The quad $(\uu,\,\omega,\,M,\,H)$ belongs to $L^\infty(0,T;L^2(\RR^2))\cap L^2(0,T;\,\Hh^1(\RR^2))$;
		\item[(ii)] The momentum equation of system \eqref{main_system} holds in the distributional sense: 
					for any compactly supported $\varphi_1, $ in $\Cc^\infty(\,[0,\,+\infty)\times \RR^2,\,\RR^2)$ with $\Div\,\varphi_1\,=\,0$, 
		\begin{equation*}
		\begin{aligned}
			\rho_0\int_{\RR^2}\uu(t,\,x)\cdot \varphi_1(t,\,x)\dd x\,+\,(\eta+\zeta)\int_0^t\int_{\RR^2} \nabla \uu(s ,\,x): \nabla \varphi_1(s ,\,x)\dd x\dd s \,=\\=\,
			\rho_0\int_{\RR^2}\uu_0(x)\cdot \varphi_1(0,x)\dd x\,+
			\int_0^t\rho_0\int_{\RR^2}\uu(s ,\,x)\cdot \partial_t \varphi_1(s ,x)\dd x\dd s \,+\\+\,
			\rho_0\int_0^t \int_{\RR^2}\uu(s ,x)\otimes\uu(s ,x):\nabla \varphi_1(t,x)\dd x\,+\\+
			\mu_0 \int_0^t\int_{\RR^2}(M(s ,x)\cdot \nabla H(t,x))\cdot \varphi_1(s ,x)\dd x\dd s \,-\,
			2\zeta
			\int_0^t\int_{\RR^2}\omega(s ,\,x)\,\curl \varphi_1(s ,x) \dd x\dd s ,
		\end{aligned}
		\end{equation*}
		for almost any $t\in(0,T)$.
		\item[(iii)] The angular momentum equation of system \eqref{main_system} holds in the distributional sense:
		for any compactly supported $\varphi_2\in \Cc^\infty(\,[0,\,+\infty)\times \RR^2)$
		\begin{equation*}
			\begin{aligned}
				\rho_0k\int_{\RR^2}\omega(t,x)\varphi_2(t,x)\dd x\,+\,
				(\eta'\,+\,\zeta)
				\int_{\RR^2}\nabla \omega(t,x)\cdot \nabla \varphi_2(t,x)\dd x
				\,=\\=\,
				\int_0^t\rho_0k\int_{\RR^2}\omega(s ,x)\partial_t\varphi_2(s ,x)\dd x\dd s \,+\,
				\rho_0k\int_{\RR^2}\omega_0(x)\varphi_2(0,x)\dd x\,+\\+\,
				4\zeta\rho_0\int_0^t\int_{\RR^2}\omega(s ,x)\varphi_2(s ,x)\dd x\dd s \,=
				\rho_0k\rho_0\int_0^t\int_{\RR^2}\omega(s ,x)\uu(s ,x)\cdot \nabla \varphi_2(t,x)\dd x\dd s \,+\\+\,
				\mu_0\rho_0\int_0^t\int_{\RR^2}M(s ,x)\times H(t,x)\varphi_2(s ,x)\dd x\dd s \,+\,
				2\zeta\int_{\RR^2}\uu(s ,x)\times \nabla \varphi_2 (s ,x)\dd x\dd s ,
			\end{aligned}
		\end{equation*}
		for almost any $t\in (0,T)$.
		\item[(iv)] The magnetizing equation and the magnetostatic equations hold in the distributional sense: for any compactly supported $\varphi_3\in \Cc^\infty(\,[0,\,+\infty)\times \RR^2, \RR^2)$ and 
					compactly supported $\varphi_4$ in $\Cc^\infty(\RR^2, \RR^2)$
		\begin{equation*}
		\begin{aligned}
			\int_{\RR^2}M(t,\,x)\cdot \varphi_3(t,\,x)\dd x\,+\,(\eta+\zeta)\int_0^t\int_{\RR^2} \nabla M(s ,\,x): \nabla \varphi_3(s ,\,x)\dd x\dd s 
			=\\=\,
			\int_0^t\int_{\RR^2}M(s ,\,x)\cdot \varphi_3(s ,\,x)\dd x\dd s \,+
			\int_{\RR^2}M_0(x)\cdot \varphi_3(0,x)\dd x\,+\\+\,
			\int_0^t \int_{\RR^2}\uu(s ,x)\otimes M(s ,x):\nabla \varphi_3(s ,x)\dd x\dd s \,+
			\int_0^t\int_{\RR^2}\omega(s ,\,x)M(s ,\,x)\times \varphi_3(s ,x)\dd x\dd s \,-\\-\,
			\frac{1}{\tau}
			\int_0^t\int_{\RR^2}(M(s ,\,x)-H(s ,\,x))\cdot \varphi_3(s ,x) \dd x\dd s ,
		\end{aligned}
		\end{equation*}
		for almost any time $t\in(0,T)$.
	\end{itemize}
	The solution is said to be global if the previous properties are satisfied for all fixed time $T>0$.
\end{definition}

\noindent
We state our first main result. It asserts the existence and uniqueness of weak solutions to our system, for any initial datum and external force.
\begin{theorem}\label{thm:uniqueness}
	For any initial datum $(\uu_0,\,\omega_0,\,M_0,\,H_0)$ in $L^2(\RR^2)$ and any external force 
	\begin{equation*}
		F\in L^2_{loc}(\RR_+; L^2(\RR^2))\cap W^{1,2}_{loc}(\RR_+, \Hh^{-1}(\RR^2)),
	\end{equation*} 
	there exists a unique global in time weak solution $(\uu,\,\omega,\,M,\,H)$ to system \eqref{main_system} in the sense of Definition \ref{def-weaksol}.
	Moreover for any $T>0$, such a solution satisfies the following energy inequality, for all time $t\in [0,T]$:
	\begin{equation}\label{eq:L2_energy_bound}
		\EE(t)\,+\,\int_0^t \DD(s)\dd s\,\leq \, C\left(\,\EE(0)\,+\, 
		\int_0^t
		\left[ 
			\|\,(F(s)\,\|_{L^2(\RR^2)}^2\,+\,\|\,(\,F(s),\,\partial_t F(s)\,)\,\|_{\Hh^{-1}(\RR^2)}^2
		\right]\dd s\,
		\right)
	\end{equation}
	where the energy $\EE(t)$ and its dissipation $\DD(t)$ are determined by
		\begin{equation*}
		\begin{aligned}
			\EE(t)\,&=\,\rho_0\,\|\,\uu\,\|_{L^2_x}\,+\,\rho_0k\|\,\omega\,\|_{L^2_x}^2\,+\,\mu_0\,\|\,H(t)\,\|_{L^2_x}^2\,+\,\|\,M\,\|_{L^2_x}^2,\\
			\DD(t)\,&=\,\,\|\,\nabla \uu\,\|_{L^2_x}\,+\,\|\,\nabla \omega\,\|_{L^2_x}^2\,+\,\|\,\nabla M\,\|_{L^2_x}^2\,+\,\|\,\Div\, M\,\|_{L^2_x}^2\,+\,\|\, M\,\|_{L^2_x}^2+\,\|\,H\,\|_{L^2}^2.
		\end{aligned}
		\end{equation*}
\end{theorem}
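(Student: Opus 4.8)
\emph{Overall strategy.} The plan is to run Leray's scheme---construct approximate solutions, derive uniform a priori bounds, pass to the limit by compactness, and then settle uniqueness by a stability estimate---while accommodating the two features that distinguish this system: the magnetostatic field $H$ is not an independent unknown, and the available time-regularity of the approximate solutions is too weak for the classical Aubin--Lions lemma, so that the compactness step rests on the fractional-in-time variant announced in the introduction. The first move is to eliminate $H$: since $\curl H=0$ and $\Div(M+H)=F$, one has on $\RR^2$ the elliptic relation $H=\nabla\Delta^{-1}(F-\Div M)$, a gradient field, with $\|H\|_{L^2}\lesssim\|M\|_{L^2}+\|F\|_{\Hh^{-1}}$ and $\|\nabla H\|_{L^2}\lesssim\|\nabla M\|_{L^2}+\|F\|_{L^2}$ by $L^2$-boundedness of the Riesz transforms. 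This turns \eqref{main_system} into a closed system in $(\uu,\omega,M)$ with a nonlocal zeroth-order coupling, which I would regularise by a Friedrichs frequency cut-off; the resulting ODE systems are globally solvable thanks to the a priori bound below, producing approximations $(\uu_n,\omega_n,M_n)$ and $H_n=\nabla\Delta^{-1}(F-\Div M_n)$.

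\emph{A priori estimate.} This is the analytic core. Testing the momentum, angular-momentum and magnetisation equations of the truncated system against $\uu_n$, $\omega_n$ and $M_n$ respectively, I would use: that $(M_2,-M_1)\cdot M\equiv0$, so the rotational torque does not enter the $\|M_n\|_{L^2}^2$ balance; the classical micropolar cancellation between the two $\zeta$-coupling terms, which costs only $\zeta\|\nabla\uu_n\|_{L^2}^2$ and is reabsorbed by the $(\eta+\zeta)$-dissipation; that, because $\curl H_n=0$, the Kelvin force $\mu_0M_n\cdot\nabla H_n$ (written through the Hessian of the magnetic potential, up to a gradient that the pressure absorbs) and the magnetic torque $\mu_0M_n\times H_n$ cancel---once one also keeps track of $\tfrac{d}{dt}\|H_n\|_{L^2}^2$, i.e.\ tests the magnetisation equation against $\mu_0H_n$---against the transport and rotational-forcing contributions of that same equation; and that the relaxation term $-\tfrac1\tau(M_n-\chi_0H_n)$, after inserting the slaving relation and using $\int H_n\cdot M_n\le0$ when $F=0$, supplies the dissipations of $\|M_n\|_{L^2}^2$ and $\|H_n\|_{L^2}^2$ up to an $F$-forcing. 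The cross terms that survive are controlled with the two-dimensional Ladyzhenskaya inequality $\|f\|_{L^4}^2\lesssim\|f\|_{L^2}\|\nabla f\|_{L^2}$, absorbing gradient-squared factors into $\DD_n$; collecting everything yields a closed differential inequality for $\EE_n$ and $\DD_n$ from which \eqref{eq:L2_energy_bound} follows uniformly in $n$ on every $[0,T]$ by Gr\"onwall's lemma. Reading the equations backwards, one also obtains uniform bounds for $\partial_t(\uu_n,\omega_n,M_n)$ in negative Sobolev spaces; because the nonlinear and magnetic forcing terms live in spaces with poor time-integrability, these bounds are of the fractional-in-time type for which the usual Aubin--Lions lemma is not available.

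\emph{Passage to the limit.} From the uniform bounds I extract weak-$*$ limits; to pass to the limit in the quadratic terms $\uu_n\otimes\uu_n$, $\uu_n\otimes M_n$, $\omega_nM_n$, $M_n\cdot\nabla H_n$, $M_n\times H_n$, and in the linear relation defining $H_n$, I need strong convergence of $(\uu_n,\omega_n,M_n)$ in $L^2(0,T;L^2_{\loc}(\RR^2))$. Because the time-derivatives are controlled only in a fractional-in-time norm, the classical Aubin--Lions--Simon lemma does not apply directly, and I would invoke instead its fractional-in-time variant: this is the step I expect to be the main obstacle, both in pinning down the right statement and in verifying its hypotheses against the bounds above; the absence of a compact Sobolev embedding on the unbounded domain $\RR^2$ is dealt with by a spatial cut-off together with a diagonal extraction. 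The limit $(\uu,\omega,M,H)$ is then a weak solution in the sense of Definition \ref{def-weaksol}, it inherits \eqref{eq:L2_energy_bound} by weak lower semicontinuity, and the time-derivative bounds combined with $\uu,\omega,M\in L^\infty(0,T;L^2(\RR^2))$ upgrade it, by interpolation, to $\uu,\omega,M\in C([0,T];L^2(\RR^2))$, so that $\EE(t)$ is defined for every $t$; since the estimates hold on every $[0,T]$, the solution is global.

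\emph{Uniqueness.} For two solutions with the same data, write $\delta\uu=\uu^1-\uu^2$, and likewise $\delta\omega,\delta M,\delta H$; the slaving relation gives $\delta H=-\nabla\Delta^{-1}\Div\delta M$, hence $\|\delta H\|_{L^2}\le\|\delta M\|_{L^2}$. Testing the difference equations against $(\delta\uu,\delta\omega,\delta M)$, using $\Div\uu^i=0$ to kill the symmetric transport contributions and estimating every remaining difference by the two-dimensional Gagliardo--Nirenberg inequality---the coefficients $\uu^i,\omega^i,M^i,H^i$ belonging to $L^\infty(0,T;L^2)\cap L^2(0,T;\Hh^1)$, hence, in two dimensions, to $L^4(0,T;L^4(\RR^2))$---I would arrive at $\tfrac{d}{dt}\|(\delta\uu,\delta\omega,\delta M)\|_{L^2}^2+c\,\|(\nabla\delta\uu,\nabla\delta\omega,\nabla\delta M)\|_{L^2}^2\le g(t)\,\|(\delta\uu,\delta\omega,\delta M)\|_{L^2}^2$ with $g\in L^1(0,T)$, so that Gr\"onwall's lemma forces $\delta\equiv0$.
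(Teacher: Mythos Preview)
Your existence argument and energy estimate follow the paper closely: Friedrichs cut-off, the $L^2$ cancellation structure you describe (which the paper imports from \cite{Stefano2}), and---crucially---the fractional-in-time Aubin--Lions substitute (the paper's Theorem~\ref{thm:AL_fractional}, proved in Appendix~\ref{sec:compactness}), which is exactly how the paper handles the fact that $\partial_t V_n$ is only $L^1_{\loc}$ in time with values in a negative Sobolev space.

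For uniqueness you diverge from the paper. You propose an $L^2$ stability estimate closed with the two-dimensional Ladyzhenskaya inequality; the paper instead works in $\Hh^{-1/2}(\RR^2)$, using the product law $\Hh^{1/2}\times L^2\hookrightarrow\Hh^{-1/2}$ together with a commutator lemma (Lemma~\ref{comm-est}). The authors assert in the introduction that the $L^2$ route ``would only allow to prove a weak-strong uniqueness result''; in this two-dimensional Bloch--Torrey-regularised setting, however, your scheme does close: every cross term---including the worst one, $\langle M\cdot\nabla\delta H,\delta\uu\rangle_{L^2}$, after integration by parts---is controlled with coefficients of the form $\|M\|_{L^4}^4$, $\|\nabla M\|_{L^2}^2$, $\|\nabla\tilde H\|_{L^2}^2$, all of which lie in $L^1_{\loc}(0,T)$ for weak solutions precisely because the $\sigma\Delta M$ dissipation puts $M\in L^2_tH^1_x$. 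So your argument yields a more elementary uniqueness proof here. The paper's $\Hh^{-1/2}$ approach is less sharp but more structural: it avoids the borderline 2D interpolation and would survive if the diffusion on $M$ were removed or if one moved to three dimensions, situations in which your $L^2$ estimate really would degenerate to weak--strong.
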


\noindent
The existence part of weak solutions will be studied in Section \ref{sec:existence}; the methodology adopted is a rather classical use of Faedo-Galerkin approximation scheme, with an interesting technical challenge. The approximated system reads in fact as
\begin{equation*}
\partial_t V_n = g_n, 
\end{equation*}
where $ V_n\in L^2_{\loc}\pare{\bR_+; H^1} $ (we refer to the uniform energy bounds provided in \cite{Stefano2}) while we can say that $ g_n\in L^1_{\loc}\pare{\bR_+; H^1} $ \textit{only}. Such low time-regularity does not allows us to use compactess theorems such as Aubin-Lions lemma \cite{Aubin} in order to deduce that the sequence $ \pare{V_n}_n $ is \textit{strongly} compact in some suitable topology. In this setting hence we use an approach similar to the one adopted in \cite[pp. 69--71]{Lions69} (we refer as well to \cite{Amirat10}); such approach consists in providing a uniform bound for the sequence
\begin{equation*}
\pare{\pare{1+\av{\partial_t}^\gamma}V_n}_n, \hspace{1cm} \ \gamma \in\pare{0, \frac{1}{4}}, 
\end{equation*}
which is indeed less restrictive than providing a uniform bound for the sequence $ \pare{\partial_t V_n}_n $. We prove (and we refer to Section \ref{sec:existence} and Appendix \ref{sec:compactness} for details) that, given any $ 0<T<\infty $, if the sequence $ \pare{\pare{1+\av{\partial_t}^\gamma}V_n}_n $ is uniformly bounded in $ L^2\pare{\bra{0, T}; H^{-N}}, \ N\gg 1 $ and $ \pare{V_n}_n $ is uniformly bounded in $  V_n\in L^2\pare{\bra{0, T}; H^1} $ then the sequence $ \pare{V_n}_n $ is compact in the space $ L^2\pare{\bra{0, T}; H^s_{\loc}}, \ s\in\pare{-N, 1} $. 

\noindent
The main difficulties associated with treating the uniqueness of weak solutions for system \eqref{main_system} are related in first place to the presence of the Navier-Stokes part, in particular to the conservative contribution of the Lorentz force. We should essentially think of the system \eqref{main_system} as an highly non-trivial perturbation of a Navier-Stokes system. It is known that for Navier--Stokes alone the uniqueness of weak solutions in $2D$ can be achieved through rather standard arguments, while in $3D$ it is  a major problem.

\noindent
The extended system that we deal with has an intermediary position. Indeed, the non-linear perturbation produced by the presence of the Lorentz force generates a significant technical challenge, which should first be attributed to the low regularities available both for the  magnetizing and demagnetizing field, $M$ and $H$. To deal with this issue and estimate the difference betweeen two solutions with same initial data, a rather common way is to introduce a weak norm being below the natural spaces in which the weak solutions are defined.
This approach is not new in literature, and it was used before in the context of the hydrodynamics of liquid cyrstals \cite{Li} as well as for the usual Navier-Stokes system in \cite{Furioli} and \cite{Marchand}.
We mention that evaluating the difference of two solutions at the same regularity level of the standard energy is not enough for our purpose. Indeed it would only allow to prove a weak-strong uniqueness result, along the same lines of \cite{Stefano2}.

\noindent
In our case, for technical convenience we use a homogeneous Sobolev space, namely $\Hh^{-1/2}(\RR^2)$. In particular, we are allowed to proceed with a negative regularity since the difference between two weak solution (with same initial data) is null at initial time $t=0$.  [35].
One of the main reasons for choosing the homogeneous setting is a specific product law, reflecting the continuity of the product within the functional spaces
\begin{equation*}
	\Hh^{s}\pare{\RR^\dd}\times \Hh^t\pare{\RR^\dd}\,\rightarrow\,\Hh^{s+t-\frac{\dd}{2}}\pare{\RR^\dd}
\end{equation*}
under suitable conditions for $s$ and $t$. 

\noindent
Our main work is then to determine a standard Gronwall inequality of the form
\begin{equation*}
	\Phi'(t)\,\leq\,f(t)\,\Phi(t),
\end{equation*}
$f(t)$ being a locally integrable function and  $\Phi(t)$ being a norm between the difference of two solutions. 
To this end, we need to overcome certain difficulties that are specific to this system. These are mainly of two different types, being related to:
\begin{itemize}
	\item Controlling the ``extraneous'' maximal derivatives, such as the highest derivatives of $M$ and $H$ in  the $\uu$-equation.
	\item Control any nonlinear terms that easily cancel at an $L^2(\RR^2)$ energy level, but persist in $\Hh^{-1/2}(\RR^2)$, because of the negative regularity.
\end{itemize}
The first issue is dealt with a specific cancellation property of the coupling system, that allows to 
simplify the worst terms, when considering certain physically meaningful combinations. 
The second one is dealt with by the mentioned product law in Sobolev space: introducing suitable indexes of regularity, we gather a standard Gronwall inequality, being careful to make use only of the norms provided by weak solutions.

Section \ref{sec:long-time} is instead dedicated to the qualitative analysis of the long-time dynamic of the global weak solutions constructed in Theorem \ref{thm:uniqueness}. The system being dissipative, we address the rate of convergence of solutions towards equilibrium. In particular we prove the following result:

\begin{theorem}\label{thm:long-time-dyn}
		Let $(\uu_0,\,\omega_0,\,M_0,\,H_0)$ be in $L^2(\RR^2)$ and $F$ in $W^{1,2}_{loc}(\RR_+, \Hh^{-1}(\RR^2))$.
		Denote by $(\uu,\,\omega,\,M,\,H)$ the unique weak solution to \eqref{main_system}  
		given by Theorem \ref{thm:uniqueness}.	Suppose that there exists a positive constants $K$ and an 
		exponent $\eta\in ]0,1[$ such that, for almost every $t>0$ one has
		\begin{equation}\label{eq:long_time_integrability_F}
			\|\,F(t)\,\|_{L^2(\RR^2)}^2
			\,+\,
			\|\,F(t)\,\|_{\Hh^{-1}(\RR^2)}^2
			+\,
			\|\,\partial_t F(t)\,\|_{H^{-1}(\RR^2)}^2\,\leq\,\frac{K}{(1+t)^{1+\eta}}
		\end{equation}
		Then, for any $\alpha<\eta$ there exists a constant $C_\alpha$ such that the following decay property is satisfied
		\begin{equation*}
			\|\,(\uu(t),\,\omega(t),\,M(t),\,H(t))\,\|_{L^2(\RR^2)}\leq\,\frac{C}{(1+t)^{\alpha}}.
		\end{equation*}
	\end{theorem}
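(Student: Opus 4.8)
The plan is to run the Fourier-splitting method of Schonbek, tuned to the dissipative structure of \eqref{main_system}. The starting point is the pointwise-in-time form of the energy balance underlying \eqref{eq:L2_energy_bound},
\[
  \frac{\dd}{\dd t}\,\EE(t)\;+\;c\,\DD(t)\;\le\;C\,g(t),\qquad
  g(t):=\|F(t)\|_{L^2}^2+\|F(t)\|_{\Hh^{-1}}^2+\|\partial_t F(t)\|_{H^{-1}}^2 ,
\]
which is produced on the way to \eqref{eq:L2_energy_bound} and for which hypothesis \eqref{eq:long_time_integrability_F} gives $g(t)\le K(1+t)^{-1-\eta}$. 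The decisive structural remark is that $\DD$ already controls the \emph{full} $L^2$-norms $\|M\|_{L^2}^2$ and $\|H\|_{L^2}^2$, and — keeping the zeroth-order damping $-4\zeta\omega$ of the angular-momentum equation in the dissipation rather than absorbing all of it into $\|\nabla\uu\|_{L^2}^2$ — also a full $\|\omega\|_{L^2}^2$ (equivalently, $\omega$ inherits exponential-type decay from its own damped parabolic equation). Hence the $\omega,M,H$ parts of $\EE$ are coercive against $\DD$, and the only obstruction to a Gronwall argument is the velocity, for which $\DD$ supplies merely $\|\nabla\uu\|_{L^2}^2$: there is no Poincar\'e inequality on $\RR^2$.

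To handle $\uu$ I would split its Fourier support at the moving frequency $R(t)^2=m/\big(\kappa(1+t)\big)$, with $\kappa$ a structural constant and $m$ a large exponent to be fixed, and use
\[
  \|\nabla\uu(t)\|_{L^2}^2\;\ge\;R(t)^2\Big(\|\uu(t)\|_{L^2}^2-\int_{|\xi|\le R(t)}|\widehat{\uu}(t,\xi)|^2\,\dd\xi\Big).
\]
Combined with the coercivity of the rest of $\DD$ and the integrating factor $(1+t)^m$, this yields
\[
  \frac{\dd}{\dd t}\Big[(1+t)^m\,\EE(t)\Big]\;\le\;C(1+t)^m\Big(g(t)+R(t)^2\!\!\int_{|\xi|\le R(t)}\!\!|\widehat{\uu}(t,\xi)|^2\,\dd\xi\Big).
\]
Integrating the $g$-term is routine; the heart of the matter is the low-frequency integral, which I would estimate from the Duhamel representation in frequency, $\widehat{\uu}(t,\xi)=e^{-\nu|\xi|^2t}\widehat{\uu}_0(\xi)+\int_0^t e^{-\nu|\xi|^2(t-s)}\widehat{\PP N}(s,\xi)\,\dd s$, where $\nu>0$ is the normalised viscosity, $\PP$ the Leray projector, and $N$ collects the advection $\uu\cdot\nabla\uu$, the Kelvin force $\mu_0M\cdot\nabla H$ and the solid-rotation coupling $2\zeta\nabla^\perp\omega$. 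Writing the quadratic terms in divergence form ($\uu\cdot\nabla\uu=\Div(\uu\otimes\uu)$) produces a factor $|\xi|$ whose square integrates over $\{|\xi|\le R(t)\}$ to a power of $R(t)^2\sim(1+t)^{-1}$; together with the $L^2_tL^2_x$ bounds carried by $\DD$ — and the magnetostatic identity, which recovers $H$ from $M$ through an order-zero Fourier multiplier plus a term of size $|\xi|^{-1}\widehat F$, so that $\|\nabla H\|_{L^2}\lesssim\|\nabla M\|_{L^2}+\|F\|_{L^2}$ — this estimates $\int_{|\xi|\le R(t)}|\widehat{\uu}(t,\xi)|^2\,\dd\xi$ by a decaying quantity. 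One then feeds the first crude decay rate for $\EE$ back into this low-frequency bound and iterates: each pass improves the exponent, and the strict inequality $\alpha<\eta$ records the loss at every step. The short initial interval is dispatched by the uniform bound $\EE(t)\le C$ of Theorem \ref{thm:uniqueness}, and the decay of $\omega,M,H$ then follows from the coercive part of $\DD$ together with the same Gronwall/bootstrap inequality.

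The main obstacle is this low-frequency analysis of $\uu$, where two difficulties compound. First, the ``extraneous'' top-order term $\mu_0M\cdot\nabla H$ — precisely the one flagged in the discussion of uniqueness — must be tamed through the cancellation available because $\curl H=0$: modulo the pressure, $\mu_0M\cdot\nabla H=\mu_0\nabla(M\cdot H)-\mu_0(\nabla M)^{\mathrm{t}}H$, so the effective force is $-\mu_0(\nabla M)^{\mathrm{t}}H$, which carries the derivative on $M$ (controlled in $L^2_tL^2_x$ by $\DD$) rather than on $H$; without this rewriting its contribution to $\widehat{N}$ fails to be integrable against the low-frequency weight. Second, since the data lie only in $L^2(\RR^2)$, the free-evolution term $e^{-\nu|\xi|^2t}\widehat{\uu}_0$ on $\{|\xi|\le R(t)\}$ has to be shown negligible at the rate demanded by the conclusion — a delicate bookkeeping of the interplay between the time-decaying damping $R(t)^2\sim(1+t)^{-1}$, the dissipation bounds, and the forcing rate $(1+t)^{-1-\eta}$ — and this, together with closing the iteration at the sharp exponent, is where the real work lies.
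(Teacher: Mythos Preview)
Your approach and the paper's both rest on Schonbek's Fourier-splitting method, but diverge at the key step: how to control the low-frequency mass $\int_{|\xi|\le R(t)}|\widehat{(\cdot)}(t,\xi)|^2\,\dd\xi$.

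You propose Duhamel for $\widehat{\uu}$, exploiting the divergence structure of the nonlinearities to gain a factor of $|\xi|$, followed by a bootstrap iteration. The paper instead proves a direct \emph{pointwise-in-$\xi$ energy estimate} (its Proposition~\ref{prop-pointwise-Fourier-estimate}): working frequency by frequency, one multiplies each transformed equation by the conjugate of its unknown, uses that the bilinear terms --- being in $L^1_x$ uniformly in time thanks to the standard energy bounds --- have Fourier transforms bounded uniformly in $\xi$, and combines the $\uu$ and $\omega$ equations so that the $\zeta$-coupling becomes a complete square $2\zeta(|\hat\omega|-|\xi||\hat\uu|)^2\ge0$. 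The output is
\[
  |\widehat{\uu}(t,\xi)|^2+|\widehat{\omega}(t,\xi)|^2+|\widehat{M}(t,\xi)|^2\;\le\;|\widehat{\uu}_0(\xi)|^2+|\widehat{\omega}_0(\xi)|^2+|\widehat{M}_0(\xi)|^2+C+\int_0^t|\xi|^{-2}|\widehat{F}(\xi,t')|^2\,\dd t',
\]
valid for \emph{all} $\xi$, which feeds directly into the splitting argument without iteration. This shortcut treats $\uu,\omega,M$ symmetrically and sidesteps both obstacles you flag: there is no Duhamel, hence no free-evolution term to analyse with $L^2$-only data, and no need for the $M\cdot\nabla H\to\nabla(M\cdot H)-(\nabla M)^{\mathrm t}H$ rewriting, since the Kelvin force is simply bounded by $\|M\cdot\nabla H\|_{L^1_x}\le\|M\|_{L^\infty_tL^2}\|\nabla H\|_{L^2_tL^2}$ pointwise in $\xi$. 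Your route is the classical Navier--Stokes variant and should close as well, but it is longer; one small caution is that the full $\|\omega\|_{L^2}^2$-coercivity you invoke is not literally present in the stated $\DD$ --- the paper handles the $\omega$-damping instead through the pointwise complete-square identity just mentioned.
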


	We would like to point out that,  under the integrability hypothesis \eqref{eq:long_time_integrability_F}, Theorem \ref{thm:long-time-dyn} asserts that the weak solutions constructed in Theorem \ref{thm:uniqueness} belong to the space $ L^2\pare{\bR_+; L^2\pare{\bR^2}} $. The approach we use in order to prove Theorem \ref{thm:long-time-dyn} is reminiscent to the one developed by M. Schonbek in \cite{Schonbek} in order to deduce decay bounds for solutions \textit{\`a la Leray} of the incompressible \NS equations.  The main strategy is the one of the Fourier splitting method, which express how the low frequencies of solutions determine the entire behavior, when considering a sufficient large time. The energy of solutions decays at the rate expected from the linear part: this fact is achieved by means of a Fourier localization within certain neighborhoods of the origin, whose sizes are time dependent, being first related to the decays of the forcing term $F$.

	At last we study the propagation of higher order Sobolev regularity for the system \eqref{main_system}. As already mentioned system \eqref{main_system} can be thought  as a highly-nontrivial perturbation of the incompressible \NS equations, in such setting we expect hence that the solutions \textit{\`a la Leray} constructed in Theorem \ref{thm:uniqueness} are solutions with \textit{critical regularity} for the parabolic system \eqref{main_system}, and being so the regularity of the solutions constructed in Theorem \ref{thm:uniqueness} should suffice to propagate any Sobolev  subcritical regularity. The main point is to establish a priori estimates, since both existence and uniqueness of solutions at this level of regularity are a straightforward adaptation of the analysis previously carried out.
Therefore we just focus on an energy ground. 
We first introduce the the following quantities
\begin{equation*}
\begin{alignedat}{32}
	\EE_s(t)\,&:=\,\rho_0 \|\,\uu(t)\, &&\|_{\Hh^s(\RR^2)}^2\,+\,\rho_0k\, \|\,\omega(t)\,  &&&&\|_{\Hh^s(\RR^2)}^2\,+\,\|\,M(t)\, &&&&&&&&\|_{\Hh^s(\RR^2)}^2\,\\
	\EE_s(0)\,&:=\,\rho_0 \|\,\;\uu_0\,&&\|_{\Hh^s(\RR^2)}^2\,+\,\rho_0k\, \|\,\;\omega_0\, &&&&\|_{\Hh^s(\RR^2)}^2\,+\,\|\,\;M_0\,&&&&&&&&\|_{\Hh^s(\RR^2)}^2\,,\,\\
\end{alignedat}
\end{equation*}
together with the dissipation
\begin{equation*}
	\DD_s(t)\,:=\,\|\,\nabla \uu(t)\,\|_{\Hh^s(\RR^2)}^2\,+\, \|\,\nabla \omega(t)\,\|_{\Hh^s(\RR^2)}^2\,+\,\|\,\nabla M(t)\,\|_{\Hh^s(\RR^2)}^2.
\end{equation*}
We then aim at proving the following statement:
\begin{theorem}\label{thm-prop-reg}
	Let us assume the initial data ${\rm U}_0\,:=\,(\uu_0,\,\omega_0,\,M_0,\,H_0)$ is in the non-homogeneous space $H^s(\RR^2)$, for a positive real $s>0$. Assuming the source term $F$ in 
	$
	L^2_{loc}(\RR_+, H^s(\RR^2))\cap W^{1,2}_{loc}(\RR_+,H^{s-1}(\RR^2)).
	$ 
	Let $(\uu,\,\omega,\,M,\,H)$ be the unique solution of system \eqref{main_system} given by Theorem \ref{thm:uniqueness}. Then
	\begin{equation*}
		(\uu,\,\omega,\,M,\,H)\,\in\,L^\infty_{\loc}(\RR_+,H^s(\RR^2))\cap L^2_{\loc}(\RR_+, H^{s+1}(\RR^2))
	\end{equation*}
	and the following dissipation formula holds true:
	\begin{equation*}
		\frac{1}{2}\EE_s(t)\,+\,\int_0^t\DD_s\pare{ t'}\dd t'\,\leq\, \Psi_s(\,{\rm U}_0,\,F,\,t),
	\end{equation*}
	where
	\begin{equation*}
		\Psi_s(\,{\rm U}_0,\, F,\,t)
		\,=\,
		\bigg(
		\frac{1}{2}\EE_s(0)
		\,+\,
		C
		\int_0^t
		\Big[
			\|\,				F\pare{ t'}\,\|_{\Hh^s}^2\,+\,
			\|\,				F\pare{ t'}\,\|_{\Hh^{s-1}}^2\,+\,
			\|\,	\partial_t	F\pare{ t'}\,\|_{\Hh^{s-1}}^2\,
		\Big]
		\dd t'
		\bigg),
	\end{equation*}
	with $C$, a suitable positive constant which depends on.
\end{theorem}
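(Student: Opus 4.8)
The plan is to treat this as a pure a priori estimate. Since, as the authors note, existence and uniqueness at this regularity level are a routine adaptation of the previous analysis, I would run the Faedo--Galerkin scheme of Section~\ref{sec:existence} with the data truncated in $H^s$, establish the bound below uniformly in the approximation parameter, pass to the limit, and invoke the uniqueness in Theorem~\ref{thm:uniqueness} to identify the limit with the weak solution of the statement. Accordingly I only describe the a priori estimate; every manipulation below is legitimate on the smooth approximations.

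First I would eliminate $H$. From $\curl H=0$ and $\Div(M+H)=F$ one gets $H=\nabla\Delta^{-1}(F-\Div M)$, hence for every $\sigma$
\begin{equation*}
	\norm{H}_{\Hh^\sigma}\lesssim\norm{M}_{\Hh^\sigma}+\norm{F}_{\Hh^{\sigma-1}},\qquad \norm{\nabla H}_{\Hh^\sigma}\lesssim\norm{\nabla M}_{\Hh^\sigma}+\norm{F}_{\Hh^{\sigma}} .
\end{equation*}
Thus $H$ is slaved to $M$ and $F$, $\nabla H$ costs exactly one derivative more than $M$ (this is why $\DD_s$ involves only $\nabla M$ and the hypothesis on $F$ sits in $\Hh^{s}\cap\Hh^{s-1}$), and $\partial_t H$ is controlled through $\partial_t\Div M$ and $\partial_t F$ (which is why $\partial_t F$ enters $\Psi_s$). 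I would then apply $\Lambda^s=(-\Delta)^{s/2}$ to the three evolution equations, take the $L^2$ scalar product with $\rho_0\Lambda^s\uu$, $\rho_0 k\,\Lambda^s\omega$ and $\Lambda^s M$ respectively, and add the $L^2$ estimate already contained in Theorem~\ref{thm:uniqueness}, reaching a differential identity of the schematic form
\begin{equation*}
	\tfrac12\tfrac{\d}{\d t}\EE_s+c_0\,\DD_s=\mathcal{T}+\mathcal{C}+\mathcal{R}+\mathcal{S},
\end{equation*}
with $\mathcal{T}$ the transport terms, $\mathcal{C}$ the magneto-rotational couplings ($\mu_0\,M\cdot\nabla H$ in the $\uu$-equation, $\mu_0\,M\times H$ in the $\omega$-equation, the gyroscopic pair $2\zeta(\partial_2\omega,-\partial_1\omega)\leftrightarrow 2\zeta\,\curl\uu$), $\mathcal{R}$ the zero-order terms ($\tfrac1\tau(M-\chi_0H)$ and $(M_2,-M_1)\omega$) and $\mathcal{S}$ the source. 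The gyroscopic pair cancels after one integration by parts, exactly as at the $L^2$ level, since $\Lambda^s$ commutes with constant-coefficient operators; $\mathcal{R}$ is bounded by $\norm{M}_{\Hh^s}^2+\norm{H}_{\Hh^s}^2$ up to a piece absorbed into $\delta\DD_s$ with an $\norm{\nabla\omega}_{L^2}$-type factor; and $\mathcal{T}$ is handled by the Kato--Ponce commutator estimate, writing each piece as $\int[\Lambda^s,\uu\cdot\nabla]f\cdot\Lambda^s f$ (the diagonal term vanishing since $\Div\uu=0$) and bounding it by $\delta\DD_s+C\,\Theta(t)\,\EE_s$, where $\Theta(t)$ is a product of $L^\infty_t$ energy norms with the $L^2_t$ dissipation norm $\norm{\nabla\uu}_{L^2}^2$, so that $\Theta\in L^1_{\loc}(\RR_+)$ by \eqref{eq:L2_energy_bound} (the tools being the two-dimensional Ladyzhenskaya inequality $\norm{g}_{L^4}^2\lesssim\norm{g}_{L^2}\norm{\nabla g}_{L^2}$ and Bernstein estimates).

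The genuine difficulty, and what I expect to be \emph{the main obstacle}, is the block $\mathcal{C}$, i.e. controlling the ``extraneous'' maximal derivatives: in $\mu_0\int\Lambda^s(M\cdot\nabla H)\cdot\Lambda^s\uu$ the factor $\nabla H$ carries as many derivatives as $\uu$ does in $\DD_s$, so a crude product bound fails for $s\in(0,1]$. Here I would reproduce the structural cancellation already exploited in the uniqueness part of Theorem~\ref{thm:uniqueness}: writing $H=\nabla\psi$ one has $M\cdot\nabla H=\nabla(M\cdot H)-(\nabla M)^{\mathsf T}H$, the gradient being annihilated against $\Lambda^{2s}\uu$ since $\Div\uu=0$; the remaining part, combined with the leading order of $\mu_0\int\Lambda^s(M\times H)\,\Lambda^s\omega$ and — after also tracking the evolution of the $\mu_0\norm{H}_{\Hh^s}^2$ piece of $\EE_s$, which through $H=\nabla\psi$ and $\partial_t\Div(M+H)=\partial_t F$ reduces to pairing $\partial_t M$ with $\mu_0\Lambda^{2s}H$ up to a $\norm{\partial_t F}_{\Hh^{s-1}}\norm{H}_{\Hh^s}$ remainder — combines into a sum whose top order either vanishes or reduces to an order-$s$ commutator, hence is controllable, at the cost of $\delta\DD_s$ plus an $L^1_{\loc}$-in-time factor times $\EE_s$, by the two-dimensional product law $\Hh^{s_1}\times\Hh^{s_2}\hookrightarrow\Hh^{s_1+s_2-1}$ (valid for $s_1,s_2<1$, $s_1+s_2>1$) together with the bounds $\uu,\omega,M\in L^2_t\Hh^{s+1}\cap L^\infty_t\Hh^s$ and the elliptic slaving of $H$. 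For $s>1$ the situation is easier, since $H^s(\RR^2)$ is then an algebra and $M\cdot\nabla H\in L^2_t H^s$ outright, and higher values of $s$ are reached by a routine bootstrap, each step using the regularity gained at the previous one to control the transport commutators.

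Collecting the estimates yields $\tfrac{\d}{\d t}\EE_s+c\,\DD_s\le f(t)\,\EE_s+C\big(\norm{F}_{\Hh^s}^2+\norm{F}_{\Hh^{s-1}}^2+\norm{\partial_t F}_{\Hh^{s-1}}^2\big)$ with $f\in L^1_{\loc}(\RR_+)$ built only from products of $L^\infty_t$ and $L^1_t$ norms of the weak solution, all controlled by \eqref{eq:L2_energy_bound}; Gronwall's lemma then gives both inclusions $(\uu,\omega,M,H)\in L^\infty_{\loc}(\RR_+;H^s)\cap L^2_{\loc}(\RR_+;H^{s+1})$ and the bound by $\Psi_s$, after passing to the limit on the approximations and identifying the limit via Theorem~\ref{thm:uniqueness}. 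I expect the crux to be precisely the isolation of the cancellation inside $\mathcal{C}$, together with the bookkeeping that keeps $f$ only locally integrable in time (never $L^\infty$), so that no smallness or short-time restriction is needed; the rest is a careful but standard combination of commutator estimates and two-dimensional interpolation.
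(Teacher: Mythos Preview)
Your outline is correct and would yield the theorem, but the route you take for the Lorentz block $\mathcal{C}$ is genuinely different from the paper's, and in fact more complicated than necessary.

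The paper works entirely in the Littlewood--Paley framework (dyadic blocks $\Dd_j$, Bony paraproduct), which is formally equivalent to your $\Lambda^s$~/ Kato--Ponce setup; for the transport terms the paper's Lemma~\ref{lemma:hig_reg} plays exactly the role of your commutator estimate. The substantive divergence is in $\langle M\cdot\nabla H,\uu\rangle_{\Hh^s}$. You rewrite $M\cdot\nabla H=\nabla(M\cdot H)-(\nabla M)^{\mathsf T}H$ via $\curl H=0$, then couple with the evolution of $\mu_0\|H\|_{\Hh^s}^2$ and with $\langle M\times H,\omega\rangle_{\Hh^s}$, hoping the top order cancels up to commutators. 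This works, but the paper never needs any such cancellation at the $\Hh^s$ level: since $H$ is elliptically slaved to $M$, one has $\|\nabla H\|_{\Hh^s}\lesssim\|\nabla M\|_{\Hh^s}+\|F\|_{\Hh^s}$, so $\|\nabla H\|_{\Hh^s}$ is \emph{already} inside the dissipation budget $\DD_s$. The paper simply estimates $\|\Dd_j(M\cdot\nabla H)\|_{L^{4/3}}$ by a Bony decomposition (Lemma~\ref{lem:tec_est_for_tec_est}), pairs it with $\|\Dd_j\uu\|_{L^4}$, and uses the 2D Gagliardo--Nirenberg interpolation $\|f\|_{L^4}\lesssim\|f\|_{L^2}^{1/2}\|\nabla f\|_{L^2}^{1/2}$ on each factor; after Young's inequality every $\Hh^{s+1}$ contribution lands in $\varepsilon\DD_s$ and the remaining coefficient is the $L^1_{\loc}$ function $\Phi_{M,H,\uu,F}$ of~\eqref{eq:def_Phi}. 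The term $\langle M\times H,\omega\rangle_{\Hh^s}$ is handled the same way (Lemma~\ref{lem:tec_est_for_MxH}), independently of the Lorentz force.

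What the paper's approach buys is uniformity: the argument is identical for every $s>0$, with no case distinction and no bootstrap. Your suggested bootstrap ``$s>1$ is easier, then iterate'' is unnecessary, and your tracking of $\mu_0\|H\|_{\Hh^s}^2$ (which is not part of the stated $\EE_s$) is avoidable once you realise that $\nabla H$ costs no more than $\nabla M$. Conversely, your approach has the merit of making the physical energy structure visible at every regularity level; it is closer in spirit to how the $L^2$ bound~\eqref{eq:L2_energy_bound} is proved.
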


Compared to the main result proved in \cite{Stefano2} we can immediately remark that Theorem \ref{thm-prop-reg} improves the hi-regularity bounds in two ways; 

\begin{itemize}

\item At first Theorem \ref{thm-prop-reg} provides uniform bounds for any fractional Sobolev regularity $ H^s, \ s > 0 $. On the contrary in \cite{Stefano2} the propagation of hi-order regularity is proved for any $ H^k, \ k\in\bN, \ k\geq 1 $ only. Such generalization of the result is possible thanks to the tools of paradifferential calculus which will be used in the proof of Theorem \ref{thm-prop-reg}. 

\item Secondly we remark that, reading in detail the proof provided in \cite{Stefano2}, that the bound provided in \cite{Stefano2} are of the form
\begin{equation*}
\norm{U\pare{t}}_{H^k} + c\norm{\nabla U\pare{t}}_{H^k}\lesssim \widetilde{e}_k\pare{t}, 
\end{equation*}
where $ \widetilde{e}_k $ is defined inductively as
\begin{equation*}
\left\lbrace
\begin{aligned}
& \widetilde{e}_0\pare{t} = C\ e^t, \\
& \widetilde{e}_{n+1}\pare{t} =C\   \exp\set{\big.\ C\  \widetilde{e}_n\pare{t}\ },
\end{aligned}
\right. 
\end{equation*}
while Theorem \ref{thm-prop-reg} asserts that we can bound \textit{uniformly} any Sobolev norm with a uniform bound, improving remarkably the bounds provided in \cite{Stefano2}. 

\end{itemize}

The present paper is hence so structured

\begin{itemize}

\item In Section \ref{sec:existence} we prove by compactness methods that there exist solutions \textit{\`a la Leray} for the system \ref{main_system}. As already mentioned above such result is attained proving that the sequence $ \pare{V_n}_n $ of approximated solutions (in the sense of a Galerkin scheme, see system \eqref{eq:Rosensweig2D_approximated}) of system \eqref{main_system} are bounded in the space $ L^2\pare{\bra{0, T}; H^1} $, and proving that the sequence $ \pare{\av{\partial_t}^\gamma V_n}_n, \ 0<\gamma< 1/4  $ is uniformly bounded in $ L^2\pare{\bra{0, T}; H^{-N}}, \ N\gg 1 $, whence $ V_n\rhu V $ in $ H^\gamma\pare{\bra{0, T}; H^1, H^{-N}} $ (see \eqref{eq:def_Hgamma}) and thanks to Theorem \ref{thm:AL_fractional} $ V_n\to V $ in $ L^2_{\loc}\pare{\bR^2\times \bR_+} $. 

\item Section \ref{sec-uniq} is dedicated to prove that the solutions constructed in Section \ref{sec:existence} are unique in $ \dot{H}^{-1/2} $, concluding the proof of Theorem \ref{thm:uniqueness} initiated in Section \ref{sec:existence}. The proof consists in a careful and rather technical energy estimate performed in the lower regularity norm  $ \dot{H}^{-1/2} $. 

\item  Section \ref{sec:long-time} is devoted to the proof of Theorem \ref{thm:long-time-dyn}, under suitable assumption of decay on the external magnetic fields (see \eqref{eq:long_time_integrability_F}) we prove that the (unique) weak solution constructed in Theorem \ref{thm:uniqueness} decay as $ \left\langle t \right\rangle ^{-\alpha} $ for some $ \alpha > 0 $. This entails a decay result for the solutions constructed in Theorem \ref{thm:uniqueness}, improving the integrability results of Theorem \ref{thm:uniqueness}. 

\item Finally in Section \ref{sec:higher_regularity} we prove that the solutions constructed in Theorem \ref{thm:uniqueness} are critical and allow the propagation of any (nonhomogeneous) Sobolev regularity $ H^s, \ s>0 $. Such result is attained via some energy estimated performed using some paradifferential calculus techniques, such as the Littlewood-Paley dyadic decomposition and Bony paraproduct rules (see Appendix \ref{app:LP} and \cite[Chapter 2]{BCD}).

\end{itemize}

%
%

\section{Existence of weak solutions} 
\label{sec:existence}

This section is devoted to prove Theorem \ref{thm:uniqueness}, we begin with mentioning that the energy estimate provided in the statement of Theorem \ref{thm:uniqueness} has already been proven in \cite[Lemma 3.2]{Stefano2}. For this reason this section deals directly with a sequence of regularized solutions for the main system \eqref{main_system} (see as well Lemma \ref{lem:energy_bounds_approximate_system}). \\

We construct global weak solutions \textit{\`a la Leray} for the system \eqref{main_system} in a rather classical fashion using an approximation scheme. A similar result was proved in \cite{Stefano2}, we stress out though that in \cite{Stefano2} the solutions constructed belonged to the space 
\begin{align*}
\pare{\uu, \omega, M, H}& \in \Cc\pare{\RR_+; H^{\frac{1}{2}}\pare{\bR^2}}\cap L^\infty_{\loc}\pare{\RR_+; H^{\frac{1}{2}}\pare{\bR^2}},\\
\pare{ \nabla \uu, \nabla \omega, \nabla M, \nabla H} & \in L^2_{\loc}\pare{\RR_+; H^{\frac{1}{2}}\pare{\bR^2}},
\end{align*}
whence \cite{Stefano2} dealt with construction of global weak solutions with \textit{hi-regularity}. \\

Let us fix some notation. Let us consider a generic Banach space $ X $ and let us consider three function $ f\in L^2\pare{\bR; X}, \ g\in L^2 \pare{\bR^d} $ and $ h\in L^2\pare{\bR^{d+1}} $. Let us define respectively
\begin{align*}
\cF_t f \pare{\tau, x} & = \int _{-\infty}^{+\infty} f\pare{x, t} e^{-2\pi i \ \tau t}\d t
\\
\cF_x g \pare{\xi} 
& = \int _{\bR^d} g\pare{x} e^{-2\pi i \ \xi\cdot x}\dx \hspace{7mm} = \hat{g}\pare{\xi},\\
\Ft{h}\pare{\xi, \tau} & = \int_{-\infty}^{+\infty} \int _{\bR^d} h\pare{x, t} e^{-2\pi i\pare{\xi\cdot x + \tau t}}\dx\ \d t.
\end{align*}
Indeed $ \cF_t \cF_x h = \Ft{h} $.  \\

Given two topological spaces $ A, B $ we  say that $ A\subset B $ if $ A $ is topologically included in $ B $, and if the inclusion is compact we denote it as $ A\Subset B $.  Let us consider now three Banach spaces $ X_0, X_1 $ and $ X $ so that
\begin{equation}\label{eq:chain_Banach}
X_0\Subset X \subset X_1, \hspace{5mm} X_i, \ i=0, 1  \text{ is reflexive}. 
\end{equation}
Given any $ T, \gamma > 0 $ let us define the spaces
\begin{equation}\label{eq:def_Hgamma}
\begin{aligned}
H^\gamma \pare{\bR\big.  ; X_0, X_1} & = \set{v\in L^2\pare{\bR; X_0} \  \left| \ \av{\tau}^\gamma \cF_t{v} \in L^2\pare{\bR; X_1}\Big.  \right.  }, \\
H^\gamma \pare{\bra{0, T} \big. ; X_0, X_1} & = \text{ restriction of }H^\gamma \pare{\bR \big.  ; X_0, X_1} \text{ onto } \bra{0, T}. 
\end{aligned} 
\end{equation}

In the proof of the existence result we will use the following result, for a proof of which we refer the reader to Appendix \ref{sec:compactness}; 
\begin{theorem}\label{thm:AL_fractional}
Let us suppose $ X_0, X_1, X $ are Hilbert spaces which satisfy the condition \eqref{eq:chain_Banach}, then the space $ H^\gamma \pare{\bra{0, T} \big.  ; X_0, X_1}  $ is compactly embedded in $ L^2\pare{\bra{0, T} \big. ; X} $. 
\end{theorem}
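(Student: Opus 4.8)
### Proof proposal

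\textbf{Overall strategy.} The plan is to prove the compact embedding $H^\gamma([0,T];X_0,X_1)\Subset L^2([0,T];X)$ by combining a frequency-truncation argument in the time variable with the compactness of the embedding $X_0\Subset X$. This is the classical Aubin--Lions--Simon pattern: split a bounded sequence into a low-time-frequency part, which lives in a space compact in $L^2([0,T];X)$ thanks to $X_0\Subset X$ together with equicontinuity in time, and a high-time-frequency tail, which is uniformly small because of the $|\tau|^\gamma$ weight. The only genuinely new ingredient compared to the integer-order case is that the control on time oscillations comes from a fractional Fourier multiplier rather than from $\partial_t v\in L^2$, so I will phrase the ``equicontinuity'' step directly on the Fourier side.

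\textbf{Step 1: reduction to $\bR$ and setup.} First I would fix a bounded sequence $(v_n)_n$ in $H^\gamma([0,T];X_0,X_1)$ and extend each $v_n$ to $\tilde v_n$ on all of $\bR$ so that $\tilde v_n\in L^2(\bR;X_0)$ and $|\tau|^\gamma\cF_t\tilde v_n\in L^2(\bR;X_1)$ with norms controlled by the $H^\gamma([0,T];\cdot)$-norm of $v_n$ (a bounded extension operator in time; one may multiply by a cutoff, at the price of constants depending on $T$ and $\gamma<1/4<1$, so no boundary terms obstruct the Fourier characterization). It suffices to extract a subsequence converging in $L^2([0,T];X)$. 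By reflexivity of $X_0$ and $X_1$ and weak compactness, pass to a subsequence with $\tilde v_n\rightharpoonup v$ in $L^2(\bR;X_0)$; replacing $\tilde v_n$ by $\tilde v_n-v$ we may assume the weak limit is $0$ and must show $\|v_n\|_{L^2([0,T];X)}\to 0$.

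\textbf{Step 2: the splitting.} For a parameter $R>0$ write $v_n=v_n^{\mathrm{lo}}+v_n^{\mathrm{hi}}$ with $\widehat{v_n^{\mathrm{lo}}}(\tau)=\mathbf 1_{|\tau|\le R}\,\cF_t v_n(\tau)$. For the high part, Plancherel in time and the bound on $|\tau|^\gamma\cF_t v_n$ give
\begin{equation*}
\|v_n^{\mathrm{hi}}\|_{L^2(\bR;X_1)}^2=\int_{|\tau|>R}\|\cF_t v_n(\tau)\|_{X_1}^2\,\d\tau\le R^{-2\gamma}\int_{\bR}|\tau|^{2\gamma}\|\cF_t v_n(\tau)\|_{X_1}^2\,\d\tau\le C R^{-2\gamma},
\end{equation*}
uniformly in $n$. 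Since $X\subset X_1$ continuously (indeed $X_0\Subset X\subset X_1$), this makes $\|v_n^{\mathrm{hi}}\|_{L^2([0,T];X)}\le C R^{-\gamma}$, which is small for $R$ large, uniformly in $n$. For the low part I would use that $v_n^{\mathrm{lo}}(t)=\int_{|\tau|\le R}\cF_t v_n(\tau)e^{2\pi i\tau t}\,\d\tau$ is, for each $t$, a bounded family in $X_0$ (Cauchy--Schwarz in $\tau$ plus the $L^2(\bR;X_0)$ bound), and moreover is equi-Lipschitz in $t$ with values in $X_0$: $\partial_t v_n^{\mathrm{lo}}$ is bounded in $L^\infty(\bR;X_0)$ because the extra factor $2\pi i\tau$ is bounded by $2\pi R$ on the support. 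Hence $(v_n^{\mathrm{lo}})_n$ is bounded and equicontinuous from $[0,T]$ into $X_0$, so by Arzel\`a--Ascoli together with the compact embedding $X_0\Subset X$ it is relatively compact in $C([0,T];X)$, hence in $L^2([0,T];X)$. Along a further subsequence $v_n^{\mathrm{lo}}$ converges in $L^2([0,T];X)$; its limit must be $0$ since $v_n\rightharpoonup 0$ in $L^2$ and $v_n^{\mathrm{hi}}$ is small.

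\textbf{Step 3: conclude by diagonal/$\varepsilon$ argument.} Given $\varepsilon>0$ choose $R$ with $CR^{-\gamma}<\varepsilon/2$; then $\limsup_n\|v_n\|_{L^2([0,T];X)}\le \limsup_n\|v_n^{\mathrm{lo}}\|_{L^2([0,T];X)}+\varepsilon/2=\varepsilon/2$. Letting $\varepsilon\to0$ gives $v_n\to0$ in $L^2([0,T];X)$ along the extracted subsequence, and since the limit is determined (it is the weak limit), the whole original sequence has a convergent subsequence with the correct limit. This proves the compact embedding.

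\textbf{Main obstacle.} The delicate point is Step 2 for the low-frequency part: one must be careful that truncating in time frequency on $\bR$ interacts correctly with the restriction to $[0,T]$, i.e. that the extension operator in Step 1 is bounded and that the equicontinuity estimate genuinely uses only the weighted norm with $\gamma>0$ (any $\gamma>0$ suffices here — the restriction $\gamma<1/4$ is needed elsewhere, for the nonlinear terms, not for this lemma). A secondary technical care is that $X$ need not be a Hilbert space for the argument, but since the statement assumes $X_0,X_1,X$ Hilbert, Plancherel in time with $X_1$-valued functions is immediate; if one wanted only Banach $X$ one would replace Plancherel by a direct Fourier-integral estimate, which is why the Hilbert hypothesis is convenient.
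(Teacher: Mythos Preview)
Your overall splitting strategy is sound and close in spirit to the paper's, but Step~2 for the high-frequency tail contains a genuine error in the direction of the embedding. You correctly obtain $\|v_n^{\mathrm{hi}}\|_{L^2(\bR;X_1)}\le CR^{-\gamma}$, and then write ``Since $X\subset X_1$ continuously \ldots this makes $\|v_n^{\mathrm{hi}}\|_{L^2([0,T];X)}\le CR^{-\gamma}$''. But the inclusion $X\subset X_1$ gives $\|\cdot\|_{X_1}\lesssim\|\cdot\|_X$, not the reverse; smallness in the weaker norm $X_1$ says nothing about the stronger norm $X$. As written, Step~3 therefore does not close.

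The missing ingredient is Ehrling's (Lions') lemma: since $X_0\Subset X\subset X_1$, for every $\varepsilon>0$ there is $C_\varepsilon$ with $\|u\|_X\le\varepsilon\|u\|_{X_0}+C_\varepsilon\|u\|_{X_1}$. Applying this to $v_n^{\mathrm{hi}}$ and using that the sharp Fourier cutoff is bounded on $L^2(\bR;X_0)$ (Plancherel) gives
\[
\|v_n^{\mathrm{hi}}\|_{L^2([0,T];X)}\le\varepsilon\,C+C_\varepsilon R^{-\gamma},
\]
and one chooses $\varepsilon$ small, then $R$ large. The paper organises this slightly differently: it first proves $v_n\to0$ in $L^2([0,T];X_1)$ (so only the $X_1$-smallness of the tail is needed), and then upgrades to $L^2([0,T];X)$ by precisely this interpolation together with the uniform $L^2([0,T];X_0)$ bound. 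For the low-frequency part the paper works on the Fourier side via pointwise weak convergence in $X_0$, the compact embedding $X_0\Subset X_1$, and dominated convergence, rather than your Arzel\`a--Ascoli route; either approach is fine, but the Ehrling step is not optional.
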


We have now all the ingredients required in order to construct global weak solutions for the system \eqref{main_system}, the procedure is rather similar to \cite[Section 4]{Stefano2}. Let us define the following truncation operator
\begin{equation*}
\cJ_n v = \cF^{-1}\pare{1_{\set{\frac{1}{n}\leqslant\av{\xi}\leqslant n}} \hat{v}\pare{\xi}}, 
\end{equation*}
which localize a tempered distribution $ v $ away from  low and  high frequencies. With such we can define the following sequence of approximating systems of \eqref{main_system} 

\begin{equation}\label{eq:Rosensweig2D_approximated1}
\left\lbrace
\begin{aligned}
& \rho_0 \pare{\partial_t \uu_n +\cJ_n \pare{\uu_n \cdot \nabla \uu_n} } -\pare{\eta + \zeta} \Delta \uu_n + \nabla p_n = \mu_0 \  \cJ_n \pare{  M_n\cdot \nabla H_n} + 2\zeta \pare{\begin{array}{c}
\partial_2 \omega_n\\ -\partial_1\omega_n
\end{array}}, \\
& \rho_0 k \pare{\partial_t \omega_n + \cJ_n\pare{ \uu_n\cdot \nabla \omega_n}} - \eta' \Delta \omega_n = \mu_0 \cJ_n \pare{ M_n\times H_n} + 2\zeta \pare{\curl \ \uu_n -2\omega_n}, \\
& \partial_t M_n + \cJ_n\pare{ \uu_n\cdot \nabla M_n} - \sigma \Delta M_n = \cJ_n\pare{\pare{
\begin{array}{c}
-M_{2, n}\\ M_{1, n}
\end{array}
}\omega_n} -\frac{1}{\tau}\pare{M_n-\chi_0 H_n }, \\
& \div\pare{H_n+ M_n}=\cJ_n F, \\
& \div \ \uu_n = \curl \ H_n =0, \\
& \left. \pare{\uu_n, \omega_n, M_n, H_n}\right|_{t=0}= \pare{\cJ_n \uu_0, \cJ_n\omega_0, \cJ_n M_0, \cJ_n H_0 }. 
\end{aligned}
\right.
\end{equation}

Using the approximate magnetostatic equation $  \div\pare{H_n+ M_n}=\cJ_n F $ we can define $ H_n $ as a function of $ M_n $ and the external magnetic field as
\begin{equation}\label{eq:def_Hn}
H_n = -\cQ M_n +\cG_n, 
\end{equation}
where $ \cQ = \Delta^{-1}\nabla\div $ and  $ \cG_n= \nabla\Delta^{-1} \cJ_n F $. Whence, denoting as $ \cP $ the Leray projector onto divergence-free vector fields, explicitly  defined as
\begin{equation*}
\begin{aligned}
\cP v &  =  \pare{1-\cQ}v, \\
 & = \pare{1- \Delta^{-1}\nabla\div} v, 
\end{aligned}
\end{equation*}
we can write the approximated system \eqref{eq:Rosensweig2D_approximated1} in an evolutionary form:

\begin{equation}\label{eq:Rosensweig2D_approximated}
\left\lbrace
\begin{aligned}
& 
\begin{multlined}
\rho_0 \pare{\Big. \partial_t \uu_n +\cP \cJ_n \pare{\cP \uu_n \cdot \nabla \cP \uu_n} } -\pare{\eta + \zeta} \Delta \uu_n \\
 = \mu_0 \  \cP \cJ_n \pare{\Big.   M_n\cdot \nabla \pare{-\cQ M_n +\cG_n}} + 2\zeta \cP \pare{\begin{array}{c}
\partial_2 \omega_n\\ -\partial_1\omega_n
\end{array}},
\end{multlined}
 \\[5mm]
&
\begin{multlined}
 \rho_0 k \pare{\Big. \partial_t \omega_n + \cJ_n\pare{ \cP \uu_n\cdot \nabla \omega_n}} - \eta' \Delta \omega_n
 \\
 = \mu_0 \cJ_n \pare{ \Big.  M_n\times \pare{-\cQ M_n +\cG_n}} + 2\zeta \pare{ \curl \ \cP \uu_n -2\omega_n}, 
\end{multlined} 
 \\[5mm]
&
\begin{multlined}
 \partial_t M_n + \cJ_n\pare{ \cP \uu_n\cdot \nabla M_n} - \sigma \Delta M_n
\\
  = \cJ_n\pare{\pare{
\begin{array}{c}
-M_{2, n}\\ M_{1, n}
\end{array}
}\omega_n} -\frac{1}{\tau}\pare{M_n-\chi_0 \pare{-\cQ M_n +\cG_n} },
\end{multlined} 
 \\[5mm]
& \left. \pare{\uu_n, \omega_n, M_n}\right|_{t=0}= \pare{\cJ_n \uu_0, \cJ_n\omega_0, \cJ_n M_0 }. 
\end{aligned}
\right.
\end{equation}

Let us define the space
\begin{equation*}
L^2_n = \set{v \in L^2\pare{\bR^2}\ \left| \ \text{Supp} \ \hat{f} \subset B_n\pare{0}\setminus B_{1/n}\pare{0} \right. }, 
\end{equation*}
which endowed with the $ L^2\pare{\bR^2} $ scalar product is an Hilbert space. 
Denoting $ V_n=\pare{\uu_n, \omega_n, M_n} $ we can say that \eqref{eq:Rosensweig2D_approximated} can be written in the generic form
\begin{equation}\label{eq:Rosensweig2D_approximated_CL}
\left\lbrace
\begin{aligned}
& \frac{\d}{\d t} V_n = g_n\pare{V_n}, \\
& \left. V_n\right|_{t=0} =V_0, 
\end{aligned}
\right. 
\end{equation}
where $ \pare{g_n}_n $ is a sequence of nonlinear applications continuous onto $ L^2_n $, i.e.
\begin{equation}\label{eq:continuity_gn}
\norm{g_n\pare{V_n}}_{L^2_n} \leqslant C n^N \pare{ \norm{V_n}_{L^2_n}^2 +  \norm{V_n}_{L^2_n}}. 
\end{equation}
The value $ N $ in  \eqref{eq:continuity_gn} has to understood as "large" and uniform in $ n $. In such setting we can apply the following version of the Cauchy-Lipschitz theorem on the system \eqref{eq:Rosensweig2D_approximated_CL}, for a proof of which we refer the reader to \cite[Theorem 3.2, p. 124]{BCD}; 
	
	\begin{lemma}\label{lem:BUcriterion}
	Let us consider  the ordinary differential equation
	\begin{equation}\tag{ODE}\label{eq:ODE}
	\left\lbrace
	\begin{aligned}
	& \dot{\uu}= F \pare{ \uu, t }\\
	& \left. \uu \right|_{t=0} = \uu_0 \in \omega
	\end{aligned}
	\right. ,
	\end{equation}
	where $\omega$ is an open subset of a Banach space $X$. Let
\begin{equation*}
\begin{aligned}
 &F : && \omega\times \bR_+ && \to && X\\
 & && \pare{ \uu, t } && \mapsto && F \pare{ \uu, t }
\end{aligned},
\end{equation*}	
 be such that, for each $\uu_1, \uu_2 \in \omega$ there exists a function $L\in L^1_{\loc} \pare{ \bR_{+} }$ such that 
	\begin{equation*} 
	\norm{F\pare{ \uu_1, t }-F\pare{ \uu_2, t }}_X \leqslant L \pare{ t } \norm{\uu_1 - \uu_2}_X.
	\end{equation*}
	
	\noindent
	 Let us suppose moreover that
	\begin{equation*}
	\norm{F \pare{ \uu, t }}_{X} \leqslant \beta \pare{ t } M \pare{ \norm{\uu}_{X} },
	\end{equation*}
	where $M\in L^\infty_{\loc} \pare{ \R_+ }, \beta \in L^1_{\loc} \pare{ \R_+ }$. Then  there exists a unique maximal solution $\uu$ in the space $\mathcal{C}^1 \pare{ [0, t^\star ) ; X }$  of \eqref{eq:ODE},  such that, if $t^\star <\infty$,
	\begin{equation}\label{eq:BU_ODE}
	\limsup_{t\nearrow t^\star} \norm{\uu\pare{ t }}_{X} =\infty.
	\end{equation}
	\end{lemma}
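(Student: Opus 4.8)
The plan is to treat \eqref{eq:ODE} by the classical Picard scheme, adapted to the fact that the Lipschitz and growth controls are only locally integrable in time rather than locally bounded. First I would rewrite the problem in integral form, $\uu(t)=\uu_0+\int_0^t F(\uu(s),s)\,\dd s$, and run a fixed-point argument. Fix $r>0$ so that the closed ball $\bar B(\uu_0,r)$ lies in $\omega$, and for $\delta>0$ let $E_\delta$ be the set of continuous maps $\bra{0,\delta}\to\bar B(\uu_0,r)$, a closed subset of $\Cc(\bra{0,\delta};X)$. On $E_\delta$ define $\Phi(\vv)(t)=\uu_0+\int_0^t F(\vv(s),s)\,\dd s$. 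The growth hypothesis gives $\norm{\Phi(\vv)(t)-\uu_0}_X\le\big(\sup_{[0,\,r+\norm{\uu_0}_X]}M\big)\int_0^\delta\beta(s)\,\dd s$, and since $\beta\in L^1_{\loc}$ the right-hand side tends to $0$ as $\delta\to0$ (absolute continuity of the integral), so $\Phi$ maps $E_\delta$ into itself once $\delta$ is small. To obtain a contraction \emph{without} shrinking $\delta$ further — which one cannot do uniformly when the Lipschitz ``constant'' is merely $L^1_{\loc}$ — I would equip $\Cc(\bra{0,\delta};X)$ with a Bielecki-type weighted norm $\norm{\vv}_\lambda=\sup_{t\in\bra{0,\delta}}e^{-\lambda\int_0^t L(s)\,\dd s}\norm{\vv(t)}_X$; the Lipschitz bound then yields $\norm{\Phi(\vv_1)-\Phi(\vv_2)}_\lambda\le\lambda^{-1}\norm{\vv_1-\vv_2}_\lambda$, a strict contraction once $\lambda>1$. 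Banach's fixed-point theorem furnishes a unique $\uu\in E_\delta$ solving the integral equation; granting the continuity of $F$ implicit in the $\Cc^1$ conclusion (or, in a Carath\'eodory reading, working with absolutely continuous solutions), $s\mapsto F(\uu(s),s)$ is continuous and the fundamental theorem of calculus upgrades $\uu$ to a $\Cc^1$ solution of \eqref{eq:ODE} on $[0,\delta]$. Uniqueness on any common time interval follows from the same Lipschitz estimate and Gronwall's lemma.

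Next I would pass from local to maximal solutions in the standard way. Let $\mathcal I$ be the set of $T>0$ for which \eqref{eq:ODE} admits a $\Cc^1$ solution on $[0,T]$; it is nonempty by the previous step and is an interval, since two solutions agree on the intersection of their domains by uniqueness. Set $t^\star=\sup\mathcal I\in(0,+\infty]$ and define $\uu$ on $[0,t^\star)$ by patching: for $t<t^\star$ pick $T\in\mathcal I$ with $t<T$ and take the value at $t$ of the solution on $[0,T]$; this is well defined and $\Cc^1$ by uniqueness. If some solution extended to the closed interval $[0,t^\star]$, one could restart the local construction at $(\uu(t^\star),t^\star)$ and glue, producing a solution beyond $t^\star$ and contradicting the definition of $t^\star$; hence $\uu$ is the maximal solution.

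Finally, for the blow-up criterion, suppose $t^\star<\infty$ and, for contradiction, that $\limsup_{t\nearrow t^\star}\norm{\uu(t)}_X<\infty$; then $\uu$ is bounded on $[0,t^\star)$ — continuity on compact subintervals together with finiteness of the $\limsup$ near $t^\star$ — say by $R$. For $s<t<t^\star$ the integral equation and the growth bound give
\[
\norm{\uu(t)-\uu(s)}_X\le\int_s^t\norm{F(\uu(\tau),\tau)}_X\,\dd\tau\le\Big(\sup_{[0,R]}M\Big)\int_s^t\beta(\tau)\,\dd\tau,
\]
which tends to $0$ as $s,t\to t^\star$ because $\beta\in L^1_{\loc}$; hence $\uu(t)$ is Cauchy as $t\nearrow t^\star$ and converges to some $\uu^\star\in X$ with $\norm{\uu^\star}_X\le R$. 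If $\uu^\star\in\omega$, the local existence step applied at $(\uu^\star,t^\star)$ together with uniqueness extends $\uu$ to a $\Cc^1$ solution on $[0,t^\star+\delta]$, contradicting maximality; thus $\uu^\star\notin\omega$. In the situation actually used in this paper $\omega$ is the whole space $X=L^2_n$, which has empty boundary, so this conclusion is already absurd and \eqref{eq:BU_ODE} follows. (For a genuine open subset $\omega\subsetneq X$ the same argument gives the usual dichotomy: either $\norm{\uu(t)}_X\to\infty$ or $\uu(t)$ approaches $\partial\omega$ as $t\nearrow t^\star$.)

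The main obstacle is precisely the low time-regularity of the data, $L,\beta\in L^1_{\loc}$ rather than $L^\infty_{\loc}$. The device that makes the argument work on a \emph{fixed} interval is the Bielecki-weighted norm in the first step, and absolute continuity of $t\mapsto\int_0^tL$ and $t\mapsto\int_0^t\beta$ is used repeatedly — to obtain the self-map property of $\Phi$, to produce the contraction, and to get the Cauchy property of $\uu$ near $t^\star$. A secondary, more pedantic point is pinning down the exact continuity/measurability hypotheses on $F$ that legitimize claiming a $\Cc^1$ solution rather than merely an absolutely continuous one.
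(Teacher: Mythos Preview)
The paper does not supply its own proof of this lemma: immediately before the statement it refers the reader to \cite[Theorem~3.2, p.~124]{BCD} and uses the result as a black box. Your proposal is a correct, self-contained Picard argument of exactly the type given in that reference; the Bielecki-weighted norm you introduce is a neat variant of the more pedestrian ``shrink $\delta$ until $\int_0^\delta L<1$'' step, and your treatment of the maximal extension and the blow-up alternative is standard and accurate.
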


	Lemma \ref{lem:BUcriterion} implies that for each $ n $ there exists a unique maximal $ T_n > 0 $ such that $ V_n $ is solution of \eqref{eq:Rosensweig2D_approximated_CL} and belongs to the space
	\begin{equation*}
	V_n\in\cC^1\pare{\left[ 0, T_n\right); L^2_n \cap H^\infty}.
	\end{equation*}
	Indeed in such setting it could happen that $ T_n\xrightarrow{n\to \infty} 0 $. In order to avoid such situation we state the following result, whose proof can be found in \cite[Lemma 3.5 and 4.2]{Stefano2}

	\begin{lemma}
\label{lem:energy_bounds_approximate_system}
Let $ V_0\in H^{\frac{1}{2}}, \ F\in L^{2}_{\loc}\pare{\bR_+; L^2}, \ \cG_F \in W^{1, \infty}_{\loc} \pare{\bR_+; H^{\frac{3}{2}}} $ and let $ V_n = \pare{\uu_n, \omega_n, M_n} $ be the unique smooth solution of \eqref{eq:Rosensweig2D_approximated_CL} identified by Lemma \ref{lem:BUcriterion}. Fixed any  $ T>0 $  there exists a positive, finite, constant $ \widetilde{c} =  \widetilde{c}\pare{T, \rho_0, \eta, \zeta, \mu_0, \kappa, \eta' , \sigma ,  \tau } $ independent of $ n $ and  $ t\in\bra{0, T} $,  such 
\begin{align*}
\norm{V_n}_{L^\infty \pare{\bra{0, t}; L^2}} & \leqslant \widetilde{c} , \\
\norm{ M_n}_{L^2 \pare{\bra{0, t}; L^2}} &  \leqslant {\widetilde{c}}, \\
\norm{\nabla V_n}_{L^2 \pare{\bra{0, t}; L^2}} &  \leqslant \frac{\widetilde{c}}{c}  , 
\end{align*}
where $ c $ is defined as
\begin{equation*}
c  = \min \set{  \frac{\eta+\zeta}{2}, \frac{\eta'}{2} , 4\zeta , \frac{\sigma}{2} , \frac{1}{\tau} , \frac{\chi_0}{2\tau} }. 
\end{equation*}
\end{lemma}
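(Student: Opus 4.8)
The plan is to establish the bounds as a uniform‑in‑$n$ Leray energy estimate obtained by testing the Galerkin system \eqref{eq:Rosensweig2D_approximated_CL} (equivalently \eqref{eq:Rosensweig2D_approximated1}) against its own solution. Since Lemma \ref{lem:BUcriterion} already gives $V_n\in\cC^1([0,T_n);L^2_n\cap H^\infty)$, all the computations below are rigorous on $[0,\min\{T_n,T\})$; the a priori bound we derive then contradicts the blow‑up alternative \eqref{eq:BU_ODE}, forcing $T_n>T$ and, upon letting $T\uparrow\infty$, $T_n=+\infty$, which is really the content of the lemma. Concretely, I would pair the $\uu_n$‑, $\omega_n$‑ and $M_n$‑equations with $\uu_n$, $\omega_n$ and $M_n$ in $L^2(\RR^2)$. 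Because every unknown lies in $L^2_n$ and $\cJ_n$ is the self‑adjoint Fourier projection onto $L^2_n$, the operator $\cJ_n$ drops out of each such scalar product; combined with $\Div\uu_n=0$ this kills the transport terms, $\langle\uu_n\cdot\nabla f,f\rangle=0$, and the gyroscopic term is pointwise orthogonal to $M_n$ since $((-M_{2,n},M_{1,n})\omega_n)\cdot M_n\equiv0$. In addition, differentiating the magnetostatic relation $H_n=-\cQ M_n+\cG_n$ (recall $\cG_n=\nabla\Delta^{-1}\cJ_n F=\cJ_n\cG_F$) in time, using $\cQ H_n=H_n$ because $\curl H_n=0$, and pairing with $\mu_0 H_n$ turns the $M_n$‑equation into an evolution identity for $\tfrac{\mu_0}{2}\|H_n\|_{L^2}^2$.

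Adding the four identities, the left‑hand side yields $\tfrac12\tfrac{\d}{\d t}\EE_n$ together with the dissipation $(\eta+\zeta)\|\nabla\uu_n\|_{L^2}^2+\eta'\|\nabla\omega_n\|_{L^2}^2+4\zeta\|\omega_n\|_{L^2}^2+\sigma\|\nabla M_n\|_{L^2}^2+\tfrac1\tau\|M_n\|_{L^2}^2$, and the integration by parts of $\sigma\Delta M_n$ against $\mu_0 H_n$ contributes $\mu_0\sigma\langle\nabla M_n,\nabla H_n\rangle=-\mu_0\sigma\|\nabla\cQ M_n\|_{L^2}^2+\mu_0\sigma\langle\nabla M_n,\nabla\cG_n\rangle$; since $\nabla\cQ M_n$ is curl‑free one has $\|\nabla\cQ M_n\|_{L^2}=\|\Div M_n\|_{L^2}$, so the first summand supplies exactly the $\|\Div M_n\|_{L^2}^2$ entry of the dissipation $\DD_n$. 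The crux is that after Step 1 the only genuinely nonlinear survivors are the four coupling terms
\[
\mu_0\langle M_n\cdot\nabla H_n,\uu_n\rangle+\mu_0\langle M_n\times H_n,\omega_n\rangle+\mu_0\langle\uu_n\cdot\nabla M_n,H_n\rangle-\mu_0\langle(-M_{2,n},M_{1,n})\omega_n,H_n\rangle,
\]
the first two coming from the $\uu_n$‑ and $\omega_n$‑equations and the last two from pairing the transport and gyroscopic terms of the $M_n$‑equation against $\mu_0 H_n$. This sum vanishes identically: from $\curl H_n=0$ one gets $\langle M_n\cdot\nabla H_n,\uu_n\rangle=\langle\uu_n\cdot\nabla H_n,M_n\rangle$, hence $\langle M_n\cdot\nabla H_n,\uu_n\rangle+\langle\uu_n\cdot\nabla M_n,H_n\rangle=\langle\uu_n\cdot\nabla(H_n\cdot M_n),1\rangle=0$ by $\Div\uu_n=0$; and $(-M_{2,n},M_{1,n})\cdot H_n=M_{1,n}H_{2,n}-M_{2,n}H_{1,n}=M_n\times H_n$ gives $\langle(-M_{2,n},M_{1,n})\omega_n,H_n\rangle=\langle M_n\times H_n,\omega_n\rangle$. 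This is precisely the cancellation property of the coupling alluded to in the introduction: the conservative Lorentz force and the magnetic torque are balanced by the magnetostatic constraint, so the magnetic coupling produces no net power and the resulting energy inequality is \emph{linear}.

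It then remains to close the estimate. The leftover right‑hand side is linear: the exchange terms $2\zeta\langle\omega_n,\curl\uu_n\rangle$ (twice), and, writing $\langle M_n,H_n\rangle=\langle\cG_n,H_n\rangle-\|H_n\|_{L^2}^2$ (using $\cP M_n\perp H_n$ together with $\cQ M_n=\cG_n-H_n$), the relaxation terms $\tfrac{\chi_0+\mu_0}{\tau}\langle\cG_n,H_n\rangle-\tfrac{\chi_0+\mu_0}{\tau}\|H_n\|_{L^2}^2-\tfrac{\mu_0\chi_0}{\tau}\|H_n\|_{L^2}^2$, plus the forcing pairings $\mu_0\sigma\langle\nabla M_n,\nabla\cG_n\rangle$ and $\mu_0\langle\cJ_n\partial_t\cG_F,H_n\rangle$. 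The $\|H_n\|_{L^2}^2$ contributions with the good sign are moved into the dissipation (providing the $\|H_n\|_{L^2}^2$ term of $\DD_n$), while Young's inequality splits the curl‑exchange and the $\cG_n$‑pairings so as to absorb small fractions of $\|\nabla\uu_n\|_{L^2}^2$, $\|\nabla M_n\|_{L^2}^2$ and $\|H_n\|_{L^2}^2$ into the dissipation, leaving a constant multiple of $\|\omega_n\|_{L^2}^2\le\EE_n/(\rho_0k)$ and the data terms $\|\cG_n\|_{L^2}^2+\|\nabla\cG_n\|_{L^2}^2+\|\partial_t\cG_F\|_{L^2}^2\lesssim\|F\|_{L^2}^2+\|\cG_F\|_{H^{3/2}}^2+\|\partial_t\cG_F\|_{H^{3/2}}^2$ (note $\|\nabla\cG_n\|_{L^2}\le\|\cJ_n F\|_{L^2}\le\|F\|_{L^2}$, which is where $F\in L^2$ enters). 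With $c$ as in the statement, this produces a differential inequality $\tfrac{\d}{\d t}\EE_n+2c\,\DD_n\le C_1\EE_n+C_2 h(t)$ with $h\in L^1_{\loc}(\RR_+)$ depending only on the data; Grönwall's lemma on $[0,T]$, together with $\EE_n(0)\lesssim\|V_0\|_{H^{1/2}}^2+\|\cG_F(0)\|_{H^{3/2}}^2$ (from $\|\cJ_n\cdot\|_{L^2}\le\|\cdot\|_{L^2}$ and $H_n(0)=-\cQ\cJ_n M_0+\cJ_n\cG_F(0)$), gives $\sup_{[0,T]}\EE_n\le\widetilde c^{\,2}$ and $\int_0^T\DD_n\le\widetilde c^{\,2}/c$ uniformly in $n$, whence $\limsup_{t\uparrow T_n}\|V_n(t)\|_{L^2_n}<\infty$ and $T_n>T$; the three displayed bounds are then read off directly from the definitions of $\EE_n$ and $\DD_n$.

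The hard part is Step 2, the exact cancellation of the magnetic coupling — this is what renders the energy inequality linear and hence globally solvable without any smallness. Getting it right demands careful bookkeeping of which pieces of the $M_n$‑equation get tested against $\mu_0 H_n$ and simultaneous use of both constraints $\Div\uu_n=0$ and $\curl H_n=0$; one must also check that the frequency truncation $\cJ_n$ neither obstructs these cancellations (it does not, being a self‑adjoint projection onto $L^2_n$, where every unknown, including $H_n=-\cQ M_n+\cG_n$, lives) nor interferes with recovering the $\|\Div M_n\|_{L^2}^2$ portion of the dissipation from the magnetic‑energy identity.
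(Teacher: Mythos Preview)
Your argument is correct and follows the classical $L^2$ energy method for the Rosensweig system. Note that the paper does not reprove this lemma; it merely cites \cite[Lemmas~3.5 and~4.2]{Stefano2}, where the very scheme you outline is carried out. The cancellation you isolate in Step~2 --- balancing the Lorentz force and magnetic torque against the transport and gyroscopic terms tested on $\mu_0 H_n$, via $\curl H_n=0$ and $\Div\uu_n=0$ --- is indeed the decisive mechanism, and is the same one underlying the energy bound \eqref{eq:L2_energy_bound}.
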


We can combine  the uniform bounds stated in Lemma \ref{lem:energy_bounds_approximate_system} with the blow-up criterion  \eqref{eq:BU_ODE} in order to deduce that $ T_n=\infty $ for any $ n $, hence
\begin{equation*}
V_n\in \cC^1\pare{ \bR_+; L^2_n}, 
\end{equation*}
moreover the uniform bounds provided in Lemma \ref{lem:energy_bounds_approximate_system} allow us to deduce that, for each $ T > 0 $
\begin{equation*}
\pare{V_n}_n \ \text{is uniformly bounded in}\ L^2 \pare{\bra{0, T}; H^{1}\pare{\bR^2}}. 
\end{equation*}

Let us now consider $ g_n=g_n\pare{V_n} $ appearing in \eqref{eq:Rosensweig2D_approximated_CL} and explicitly defined as the right hand side of \eqref{eq:Rosensweig2D_approximated}. Let us consider in particular the term
\begin{equation*}
\cJ_n \pare{ M_n\cdot\nabla \cQ M_n }, 
\end{equation*}
some simple computations using the uniform bounds provided by Lemma \ref{lem:energy_bounds_approximate_system} show us that, for any $ T >0 $ and $ n\in\bN $; 
\begin{equation*}
\norm{\cJ_n \pare{ M_n\cdot\nabla \cQ M_n }}_{L^1\pare{\bra{0, T}; \dot{H}^{-1/2}}}\leq C <\infty. 
\end{equation*}
In particular this consideration implies that the sequence $ \pare{g_n}_n $ has only $ L^1_{\loc} $ uniform regularity in-time, hence $ \pare{g_n}_n  $ can be uniformly bounded  only in some space with very low regularity such as $ L^1_{\loc}\pare{\bR; H^{-N}} $ for $ N $ large.\footnote{Let us remark that for the incompressible \NS equations it is possible to prove that $ \pare{\partial_t \uu_n}_n $ is uniformly bounded in $ L^2_{\loc}\pare{\bR_+; H^{-N}} $, making hence possible to apply Aubin-Lions lemma \cite{Aubin}. } \\

Let us now denote\footnote{Here we must cutoff the functions $ V_n, g_n $ in the interval $ \bra{0, T} $ in order to assure their $ L^2 $-in-time integrability, and hence the fact that their space-time Fourier transform is well defined. }
\begin{align*}
\widetilde{V}_n\pare{\xi, \tau} & = \Ft{\pare{ 1_{\bra{0, T}}\pare{t} \  V_n \pare{x, t} }}\pare{\xi, \tau}, \\
\widetilde{g}_n \pare{\xi, \tau} & = \Ft{\pare{ 1_{\bra{0, T}}\pare{t} \  g_n \pare{x, t} }}\pare{\xi, \tau},
\end{align*}
with such notation equation \eqref{eq:Rosensweig2D_approximated_CL} becomes
\begin{equation} \label{eq:Rosensweing_Fourier}
2\pi i  \tau \  \widetilde{V}_n \pare{\xi, \tau} = \widetilde{g}_n \pare{\xi, \tau}. 
\end{equation}
Let us denote as $ \overline{\widetilde{V}}_n\pare{\xi, \tau} $ the complex conjugate of $ {\widetilde{V}}_n\pare{\xi, \tau} $, and let us multiply  \eqref{eq:Rosensweing_Fourier} for 
\begin{equation*}
 \frac{1}{\pare{1+\av{\xi}^2}^N}\ \overline{\widetilde{V}}_n\pare{\xi, \tau}, 
\end{equation*}
$ N >0 $ large, integrate in $ \xi $ and take the imaginary part of the resulting equation and using Plancherel theorem in the space variables, we deduce
\begin{equation*}
2\pi \av{\tau} \norm{\widetilde{V}_n\pare{\tau}}_{H^{-\frac{N}{2}}}^2 \leq \norm{\widetilde{g}_n\pare{\tau}}_{H^{-{N}}} \norm{\widetilde{V}_n\pare{\tau}}_{L^2}, 
\end{equation*}
whence multiplying the above equation for $ \pare{1+\av{\tau}}^{-\beta}, \ \beta \in\left[\frac{1}{2}, \ 1\right) $ and integrating in $ \tau $ we deduce that
\begin{equation*}
 2\pi \ \int _{-\infty}^{\infty} \frac{\av{\tau}}{\pare{1+\av{\tau}}^\beta} \norm{\widetilde{V}_n\pare{\tau}}_{H^{-\frac{N}{2}}}^2 \d \tau \leq \pare{\int _{-\infty}^{\infty} \frac{\d \tau}{\pare{1+\av{\tau}}^{2\beta}} }^{\frac{1}{2}} \norm{\widetilde{g}_n}_{L^\infty\pare{\bR; H^{-N}}} \norm{\widetilde{V}_n}_{L^2\pare{\bR; L^2}}. 
\end{equation*}

Indeed if $ \beta \in\left[\frac{1}{2}, \ 1\right) $ we have that 
\begin{equation*}
\pare{\int _{-\infty}^{\infty} \frac{\d \tau}{\pare{1+\av{\tau}}^{2\beta}} }^{\frac{1}{2}} \leq C <\infty, 
\end{equation*}
moreover thanks to standard considerations on the Fourier transform we can argue that
\begin{equation*}
 \norm{\widetilde{g}_n}_{L^\infty_{\tau}\pare{\bR; H^{-N}}} \leq \norm{1_{\bra{0, T}} \ g_n}_{L^1_t\pare{\bR; H^{-N}}} = \norm{g_n}_{L^1_t\pare{\bra{0, T}; H^{-N}}} \leq C<\infty, 
\end{equation*}
moreover the bounds provided in Lemma \ref{lem:energy_bounds_approximate_system} allow us to deduce that
\begin{equation*}
\norm{\widetilde{V}_n}_{L^2_{\tau}\pare{\bR; L^2}} = \norm{V_n}_{L^2_t\pare{\bra{0, T}; L^2}}\leq T^{1/2} \norm{V_n}_{L^\infty_t\pare{\bra{0, T}; L^2}}\leq C <\infty, 
\end{equation*}
which in turn implies that
\begin{equation*}
2\pi \ \int _{-\infty}^{\infty} \frac{\av{\tau}}{\pare{1+\av{\tau}}^\beta} \norm{\widetilde{V}_n\pare{\tau}}_{H^{-\frac{N}{2}}}^2 \d \tau \leq C <\infty. 
\end{equation*}
As already explained $ \pare{\widetilde{V}_n}_n $ is uniformly bounded in $ L^2_{\tau}\pare{\bR; L^2} $, whence since $ L^2\hra H^{-\frac{N}{2}}, \ N>0 $ continuously we deduce that

\begin{equation*}
\int _{-\infty}^{\infty} \pare{1+ \frac{\av{\tau}}{\pare{1+\av{\tau}}^\beta}} \norm{\widetilde{V}_n\pare{\tau}}_{H^{-\frac{N}{2}}}^2 \d \tau \leq C <\infty, 
\end{equation*}
moreover 
\begin{equation*}
1+ \frac{\av{\tau}}{\pare{1+\av{\tau}}^\beta} \geq \av{\tau}^{2\gamma}, \hspace{7mm} \forall \ \tau \in\bR, \hspace{3mm} \gamma\in \pare{0, \frac{1-\beta}{2}}. 
\end{equation*}

\noindent
Considering hence the definition provided in \eqref{eq:def_Hgamma} we deduce that
\begin{equation*}
\pare{1_{\bra{0, T}}\ V_n}_n \text{ is uniformly bounded in } H^\gamma \pare{\bR; \  H^1\pare{\bR^2}, H^{-\frac{N}{2}}\pare{\bR^2}} \text{ for } \gamma\in \pare{0, \frac{1}{4}}. 
\end{equation*}

\noindent
Using now the result stated in Theorem \ref{thm:AL_fractional} we can state that
\begin{equation*}
H^\gamma \pare{\bra{0, T}; \  H^1_{\loc}\pare{\bR^2}, H^{-\frac{N}{2}}_{\loc}\pare{\bR^2}}
\Subset
L^2_t\pare{\bra{0, T};  H^{1-\varepsilon}_{\loc}\pare{\bR^2}},  \ \forall\  \varepsilon \in \pare{0, 1+\frac{N}{2}}
\end{equation*}
from which we deduce that there exists\footnote{Here indeed we implicitly use the continuous embedding 
\begin{equation*}
H^\gamma \pare{\bR; \  H^1\pare{\bR^2}, H^{-\frac{N}{2}}\pare{\bR^2}}
\hra 
H^\gamma \pare{\bra{0, T}; \  H^1_{\loc}\pare{\bR^2}, H^{-\frac{N}{2}}_{\loc}\pare{\bR^2}}
\end{equation*}
} a $ V = \pare{\uu, \omega , M}\in L^2_t\pare{\bra{0, T}; L^2_{\loc, x}\pare{\bR^2}} $ such that 
\begin{equation}\label{eq:conv_Leray}
V_n\xrightarrow{n \to \infty} V \hspace{4mm}\text{ in } L^2\pare{\bra{0, T}; \ H^{1-\varepsilon}_{\loc}\pare{\bR^2}},  \ \forall\  \varepsilon \in \pare{0, 1+\frac{N}{2}}. 
\end{equation}

\noindent Considering the uniform bounds provided by Lemma \ref{lem:energy_bounds_approximate_system} for the sequence $ \pare{V_n}_n $ we can say that
\begin{align*}
V & \in L^\infty\pare{\bra{0, T}; L^2\pare{\bR^2}}\cap L^2\pare{\bra{0, T}; \dot{H}^1\pare{\bR^2}}, \\
M & \in L^2\pare{\bra{0, T}; L^2\pare{\bR^2}}. 
\end{align*}

At last we must hence to prove that each limit point $ V=\pare{\uu, \omega, M} $ solves the system
\begin{equation}\label{eq:Rosensweig2D_limit}
\left\lbrace
\begin{aligned}
& 
\begin{multlined}
\rho_0 \pare{\Big. \partial_t \uu +\cP  \pare{\cP \uu \cdot \nabla \cP \uu} } -\pare{\eta + \zeta} \Delta \uu \\
 = \mu_0 \  \cP  \pare{\Big.   M\cdot \nabla \pare{-\cQ M +\cG_F}} + 2\zeta \cP \pare{\begin{array}{c}
\partial_2 \omega\\ -\partial_1\omega
\end{array}},
\end{multlined}
 \\[3mm]
&
\begin{multlined}
 \rho_0 k \pare{\Big. \partial_t \omega + \pare{ \cP \uu\cdot \nabla \omega}} - \eta' \Delta \omega
 \\
 = \mu_0  \pare{ \Big.  M\times \pare{-\cQ M +\cG_F}} + 2\zeta \pare{ \curl \ \cP \uu -2\omega}, 
\end{multlined} 
 \\[5mm]
&
\begin{multlined}
 \partial_t M + \pare{ \cP \uu\cdot \nabla M} - \sigma \Delta M
\\
  = \pare{
\begin{array}{c}
-M_{2}\\ M_{1}
\end{array}
}\omega -\frac{1}{\tau}\pare{M-\chi_0 \pare{-\cQ M +\cG_F} },
\end{multlined} 
 \\[5mm]
& \left. \pare{\uu, \omega, M}\right|_{t=0}= \pare{ \uu_0, \omega_0, M_0 },
\end{aligned}
\right.
\end{equation}
in $ \cD'\pare{\bR^2\times\bra{0, T}} $. Let us remark that system \eqref{eq:Rosensweig2D_limit} is equivalent to \eqref{main_system}. Such procedure is rather standard and was carried out in \cite{Stefano1}, nonetheless we sketch a proof  for the sake of self completeness of the work. Among all the nonlinear interactions the convergence which is less immediate to prove is the one arising from the less regular term, i.e.
\begin{equation*}
M_n\cdot\nabla \cQ M_n \xrightarrow{n\to\infty} M\cdot\nabla \cQ M,.
\end{equation*}
Considering hence a test function $ \psi \in\cD\pare{\bR^2 \times \bra{0, T}} $
\begin{multline*}
\int M_n\cdot\nabla \cQ M_n \ \psi \ \dx \d t- \int M\cdot\nabla \cQ M \ \psi \ \dx \d t \\
\begin{aligned}
 & = \int \pare{M_n -M}\cdot \nabla \cQ M_n \ \psi \ \dx \d t + \int M\cdot\nabla \cQ \pare{M_n-M} \ \psi \ \dx \d t, \\
& = I_{1, n} + I_{2,n}. 
\end{aligned}
\end{multline*}

Applying H\"older inequality we deduce the bound
\begin{align*}
I_{1, n} & \lesssim \norm{M_n - M}_{L^2_{\loc}\pare{\bR_+; L^4_{\loc}}} \norm{\nabla\cQ M_n}_{L^2_{\loc}\pare{\bR_+; L^2_{\loc}}} \norm{\psi}_{L^\infty\pare{\bR_+; L^4}}. 
\end{align*}
Standard Sobolev embeddings and \eqref{eq:conv_Leray} imply that
\begin{equation*}
\norm{M_n - M}_{L^2_{\loc}\pare{\bR_+; L^4_{\loc}}}\leqslant \norm{M_n - M}_{L^2_{\loc}\pare{\bR_+; H^{\frac{1}{2}}_{\loc}}}\to 0 \text{ as } n\to \infty, 
\end{equation*}
moreover since $ \nabla\cQ $ is  a pseudo-differential operator of order one we argue that
\begin{equation*}
\norm{\nabla\cQ M_n}_{L^2_{\loc}\pare{\bR_+; L^2_{\loc}}} \lesssim \norm{ M_n}_{L^2_{\loc}\pare{\bR_+; H^1_{\loc}}} < C < \infty, 
\end{equation*}
thanks to the results of Lemma \ref{lem:energy_bounds_approximate_system}, proving that $ I_{1, n}\xrightarrow{n\to \infty}0 $. \\
Similarly it can be proved that $ I_{2, n}\to 0 $ as $ n\to \infty $.

\section{Uniqueness of weak solutions}\label{sec-uniq}

\noindent 
In this section we provide the proof of the uniqueness result for the weak solutions of system \eqref{main_system}. The main idea is to evaluate the difference 
between two weak solutions in a functional space which is less regular than $L^2(\RR^2)$ such as $\Hh^{-1/2}(\RR^2)$. 
For convenience, we introduce the following notation:
	\begin{equation*}
	(\delta \uu,\,\delta \omega,\,\, \delta M,\, \delta H\,)\,:=\,(\uu-\tilde \uu,\,\omega-\tilde \omega,\,\,M-\tilde M,\,H - \tilde H\,),
	\end{equation*}
	where $(\uu,\,\omega,\,M,\,H)$ and $(\tilde \uu,\,\tilde \omega,\,\tilde M,\,\tilde H)$ stand for the first and second solutions, respectively. Under the 
	assumption that the initial data of both solutions coincide, we remark that
	\begin{equation*}
		(\delta \uu,\,\delta \omega,\,\, \delta M,\, \delta H\,)_{|t=0}\,=\,0.
	\end{equation*}
	We then consider the $\Hh^{-1/2}$-energy $\delta \EE_{-1/2}(t)$ of $(\delta \uu,\,\delta \omega,\,\, \delta M,\, \delta H\,)$
	\begin{equation*}
	\begin{alignedat}{32}
		\delta \EE_{-\frac{1}{2}}(t)\,&:=\,\rho_0 \|\,\delta \uu(t)\, &&\|_{\Hh^{-\frac{1}{2}}(\RR^2)}^2\,
				+\,\rho_0k\, \|\,\delta \omega(t)\,  &&&&\|_{\Hh^{-\frac{1}{2}}(\RR^2)}^2\,+\,\|\,\delta M(t)\, &&&&&&&&\|_{\Hh^{-\frac{1}{2}}(\RR^2)}^2\,\\
	\end{alignedat}	
	\end{equation*}
	together with its corresponding dissipation
	\begin{equation*}
		\delta \DD_{-\frac{1}{2}}(t)\,:=\,
		\eta
		\|	\,	\nabla	\delta \uu		\,	\|_{\Hh^{-\frac{1}{2}}}^2	\, +\,
		\eta'
		\|	\,	\nabla	\delta \omega	\,	\|_{\Hh^{-\frac{1}{2}}}^2	\,+\,
		\lambda'
		\|	\,	\Div\,	\delta \omega	\,	\|_{\Hh^{-\frac{1}{2}}}^2	\,+\,
		\sigma
		\|	\,	\nabla	\delta M			\,	\|_{\Hh^{-\frac{1}{2}}}^2	
	\end{equation*}
	We then aim at proving a standard Gronwall type inequality, which can be formulated as follows:
	\begin{equation*}
		\frac{\dd}{\dd t}\delta \EE_{-\frac{1}{2}}(t)\,+\, \,\int_0^t\delta \DD_{-\frac{1}{2}}\pare{ t'}\dd t'\,\leq \,f(t)\,\delta \EE_{-\frac{1}{2}}(t),\quad\quad
		\delta \EE_{-\frac{1}{2}}(0)\,=\,0,
	\end{equation*}
	for a suitable function $f(t)$ which is locally integrable in time.
	
	\noindent
	We begin with considering the difference between the momentum equations of the two solutions, driving the evolution of $\delta \uu$:
	\begin{align*} 
		\;\rho_ 0\big(\partial_t \delta \uu\,+\uu\cdot\nabla \delta \uu\,&+\,\delta \uu\cdot\nabla \tilde \uu \big)\,-\,(\eta +\zeta)\,\Delta \delta \uu\,+\,\nabla \delta \pre\,=\\&= \,\mu_0\,\big(\, 
		M \cdot \nabla \delta  H\,+
		\,\delta M\cdot \nabla \tilde H \,\big)	\,+2\zeta\, \left(\,\begin{matrix}\hspace{0.25cm}\partial_2 \delta \omega\\-\partial_1 \delta \omega	\end{matrix}\,\right).
	\end{align*}
	We then take the  $\Hh^{-1/2}$-inner product of the above equation with $\delta \uu$, to get:
	\begin{equation}\label{uniq-momeq}
	\begin{aligned}
		\frac{\rho_0}{2}\frac{\dd}{\dd t}\|\,\delta \uu\,\|_{\Hh^{-\frac{1}{2}}}^2+
		(\eta +\zeta)\|\,\nabla \delta \uu\,\|_{\Hh^{-\frac{1}{2}}}^2
		=
		\rho_0\langle	\,	\uu\cdot\nabla \delta \uu		,	\delta \uu	\,	\rangle_{\Hh^{-\frac{1}{2}}}+
		\rho_0\langle	\,	\delta \uu\cdot\nabla \tilde \uu 		,	\delta \uu	\,	\rangle_{\Hh^{-\frac{1}{2}}}+\\+\,
		\mu_0\langle	\,	M \cdot \nabla \delta  H		,	\delta \uu	\,	\rangle_{\Hh^{-\frac{1}{2}}}+
		\mu_0\langle	\,	\delta M\cdot \nabla \tilde H 			,	\delta \uu	\,	\rangle_{\Hh^{-\frac{1}{2}}}+
		2\zeta\langle	\,	\left(\,\begin{matrix}\hspace{0.25cm}\partial_2 \delta \omega\\-\partial_1 \delta \omega	\end{matrix}\,\right)			,	\delta \uu	\,	\rangle_{\Hh^{-\frac{1}{2}}}.
	\end{aligned}
	\end{equation}
	We then proceed to estimate each term on the right-hand side. To this end, we will repeatedly make use of the following product law between Sobolev spaces:
	\begin{align*}
		\Hh^{\frac{1}{2}}(\RR^2)\times L^2(\RR^2)\,\rightarrow \, \Hh^{-\frac{1}{2}}(\RR^2).
	\end{align*}
	Furthermore, in order to implement the estimates of the main convective terms, we need to introduce the following inequality  at the level of homogeneous Sobolev space $\Hh^{-\frac{1}{2}}(\RR^2)$:
	\begin{lemma}\label{comm-est} For any divergence-free vector field $\rm v$ in $\Hh^{\frac{1}{2}}(\RR^2)$ and any vector field $B$ in $\Hh^{-\frac{1}{2}}(\RR^2)$
		\begin{equation*}
			\langle\,  {\rm v}\cdot \nabla B,\, B\,\rangle_{\Hh^{-\frac{1}{2}}(\RR^2)}\,\leq\, C \|\,\nabla {\rm v}\,\|_{L^2(\RR^2)}\|\,\nabla B\,\|_{\Hh^{-\frac{1}{2}}}\|\,B\,\|_{\Hh^{-\frac{1}{2}}},
		\end{equation*}
		for a suitable positive constant $C$.
	\end{lemma}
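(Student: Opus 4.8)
The plan is to mimic, at the level of the homogeneous space $\Hh^{-1/2}$, the cancellation that gives $\langle\,{\rm v}\cdot\nabla B,\,B\,\rangle_{L^2}=0$ when $\Div\,{\rm v}=0$, at the price of a few commutators controlled by a Littlewood--Paley analysis. I would work with $\langle\,f,\,g\,\rangle_{\Hh^{-1/2}}=\langle\,\Lambda^{-1/2}f,\,\Lambda^{-1/2}g\,\rangle_{L^2}$, where $\Lambda:=(-\Delta)^{1/2}$, and use repeatedly that $\|\,\nabla B\,\|_{\Hh^{-1/2}}=\|\,B\,\|_{\Hh^{1/2}}$ and $\|\,B\,\|_{L^2}^2\leq\|\,B\,\|_{\Hh^{1/2}}\|\,B\,\|_{\Hh^{-1/2}}$, so that the target estimate is simply $\|\,\nabla{\rm v}\,\|_{L^2}\|\,B\,\|_{\Hh^{1/2}}\|\,B\,\|_{\Hh^{-1/2}}$. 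The naive attempt --- integrating by parts to $-\langle\,{\rm v}\otimes B,\,\nabla B\,\rangle_{\Hh^{-1/2}}$ and invoking a product law $\Hh^1\times\Hh^{-1/2}\hookrightarrow\Hh^{-1/2}$ --- is bound to fail, since such a law is false in two dimensions ($\Hh^1(\RR^2)\not\hookrightarrow L^\infty$). I would therefore split via Bony's decomposition
\begin{equation*}
	{\rm v}\cdot\nabla B\,=\,T_{\rm v}\cdot\nabla B\,+\,\sum_k T_{\partial_k B}\,{\rm v}_k\,+\,\sum_k\cR({\rm v}_k,\partial_k B),
\end{equation*}
with $T$ the paraproduct and $\cR$ the remainder.

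The two last terms are the easy ones: in both of them the top regularity sits on ${\rm v}$, which is controlled by $\|\,\nabla{\rm v}\,\|_{L^2}=\|\,{\rm v}\,\|_{\Hh^1}$, while $B$ enters only through low frequencies or through a remainder. Standard paradifferential estimates in dimension two (the remainder being admissible because $1+\tfrac12-\tfrac{d}{2}=\tfrac12>0$, and the low--frequency sums in the paraproduct being summed by Bernstein's inequality) give $\|\sum_k T_{\partial_k B}{\rm v}_k\|_{\Hh^{-1/2}}+\|\sum_k\cR({\rm v}_k,\partial_k B)\|_{\Hh^{-1/2}}\lesssim\|\,\nabla{\rm v}\,\|_{L^2}\|\,B\,\|_{\Hh^{1/2}}$; pairing with $B\in\Hh^{-1/2}$ by Cauchy--Schwarz produces exactly $\|\,\nabla{\rm v}\,\|_{L^2}\|\,B\,\|_{\Hh^{1/2}}\|\,B\,\|_{\Hh^{-1/2}}$.

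The transport term $\langle\,T_{\rm v}\cdot\nabla B,\,B\,\rangle_{\Hh^{-1/2}}=\sum_k\langle\,\Sd_{k-1}{\rm v}\cdot\nabla\Dd_k B,\,B\,\rangle_{\Hh^{-1/2}}$ is the heart of the matter, and the place where the divergence--free cancellation must be recovered. Since $\Sd_{k-1}{\rm v}\cdot\nabla\Dd_k B$ is spectrally localized near $2^k$, the pairing collapses to near--diagonal indices $|j-k|\lesssim1$, and on each such block I would pass to the $L^2$ scalar product of the $\Lambda^{-1/2}$--localized data; there, because $\Sd_{k-1}{\rm v}$ is again divergence free, $\langle\,\Sd_{k-1}{\rm v}\cdot\nabla\,\Lambda^{-1/2}\Dd_k B,\,\Lambda^{-1/2}\Dd_k B\,\rangle_{L^2}=0$, and what survives is the commutator $[\,\Lambda^{-1/2},\,\Sd_{k-1}{\rm v}\cdot\nabla\,]\Dd_k B$ (together with the twin commutator $[\,\Dd_j,\,\Sd_{k-1}{\rm v}\cdot\nabla\,]$ coming from the dyadic localization of the product). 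The operator $[\,\Lambda^{-1/2},\,\Sd_{k-1}{\rm v}\,]$ is of order $-\tfrac32$, its symbol carrying one derivative of $\Sd_{k-1}{\rm v}$; using $\|\,\nabla\Sd_{k-1}{\rm v}\,\|_{L^\infty}\lesssim 2^k\|\,\nabla{\rm v}\,\|_{L^2}$ (Bernstein) one finds that each block is bounded by $\|\,\nabla{\rm v}\,\|_{L^2}\|\,\Dd_k B\,\|_{L^2}^2$, so that, after summation in $k$, this term is $\lesssim\|\,\nabla{\rm v}\,\|_{L^2}\|\,B\,\|_{L^2}^2\leq\|\,\nabla{\rm v}\,\|_{L^2}\|\,B\,\|_{\Hh^{1/2}}\|\,B\,\|_{\Hh^{-1/2}}$. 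I expect the only genuinely delicate point to be this last step: tracking the commutators at the negative level $\Hh^{-1/2}$ instead of $L^2$, i.e. verifying that the one--derivative gain of $[\Lambda^{-1/2},\,\cdot\,]$ exactly compensates the $2^k$ losses of Bernstein, so that no power of the dyadic parameter is left over; the rest is routine summation over Littlewood--Paley blocks.
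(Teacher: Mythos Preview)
Your approach is correct and follows the same overall strategy as the paper's: Bony decomposition, standard paraproduct/remainder bounds for the pieces where the derivative falls on ${\rm v}$, and a commutator argument to recover the divergence--free cancellation in the transport piece $T_{\rm v}\cdot\nabla B$.

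The one genuine difference is in how the transport term is handled. The paper works with the equivalent Besov inner product $\langle f,g\rangle_{\Hh^{-1/2}}\sim\sum_j 2^{-j}\langle\Dd_j f,\Dd_j g\rangle_{L^2}$, so that the $\Hh^{-1/2}$ weight is the scalar $2^{-j}$, which trivially commutes with everything. One then only needs the single commutator $\Dd_j(T_{\rm v}\cdot\nabla B)-\Sd_{j-1}{\rm v}\cdot\nabla\Dd_j B$, bounded by a standard estimate of the type $\|\Dd_j(T_a b)-\Sd_{j-1}a\,\Dd_j b\|_{L^2}\lesssim\|\nabla a\|_{L^2}\sum_{|q-j|\leq5}\|\Dd_q b\|_{L^2}$, after which $\int(\Sd_{j-1}{\rm v}\cdot\nabla B_j)\cdot B_j=0$ gives the cancellation directly at the $L^2$ level. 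Your route through the operator $\Lambda^{-1/2}$ forces you to track two commutators --- the one with $\Lambda^{-1/2}$ and the ``twin'' one with $\Dd_j$ you mention --- and to invoke a Kato--Ponce/pseudodifferential type bound for $[\Lambda^{-1/2},\Sd_{k-1}{\rm v}]$. This is valid (your count of gains and losses is correct: the $2^{-3k/2}$ from the commutator order, the $2^{k}$ from $\|\nabla\Sd_{k-1}{\rm v}\|_{L^\infty}$, and the $2^{-k/2}$ on the other factor indeed leave $\|\Dd_k B\|_{L^2}^2$), but it is heavier machinery than necessary. The Besov formulation buys you a cleaner one--commutator proof that moreover generalizes verbatim to any $\Hh^{\Tt}$ with $\Tt>-1$.
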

	\noindent
	For the sake of clearness, we postpone the proof of the above Lemma to the Appendix (cf. Lemma \ref{comm-est:appendix}).
	Coming back to our problem, we remark that $\Div\,\uu\,=\,0$. Thus, in accordance with Lemma \ref{comm-est}, we gather that
	\begin{align*}	
		\langle	\,	\uu\cdot\nabla \delta \uu		,	\delta \uu	\,	\rangle_{\Hh^{-\frac{1}{2}}}
		\,
		\lesssim
		\,
		\|\,\nabla			\uu 	\,\|_{L^2	}
		\|\,			\delta 	\uu		\,\|_{H^{-\frac{1}{2}}	}
		\|\,	\nabla 	\delta 	\uu		\,\|_{H^{-\frac{1}{2}}	}
		\,
		\lesssim	
		\,
		\|\,\nabla			\uu 	\,\|_{L^2	}^2
		\|\,			\delta 	\uu		\,\|_{H^{-\frac{1}{2}}	}^2\,+\,
		\ee
		\|\,	\nabla	\delta 	\uu		\,\|_{H^{-\frac{1}{2}}	}^2,
	\end{align*}
	for a suitable small constant $\ee$ to be fixed later.Here the first intrinsic cancellation holds, thanks to the free-divergence condition on $\uu$.
	Next, the second term is estimated as follows:
	\begin{align*}	
		\rho_0\langle	\,	\delta \uu\cdot\nabla \tilde \uu 		,\,	&\delta \uu	\,	\rangle_{\Hh^{-\frac{1}{2}}}
		\,\lesssim\,
		\|\,			\delta 	\uu	\cdot\nabla \tilde \uu 	\,\|_{H^{-\frac{1}{2}}	}
		\|\,			\delta 	\uu						\,\|_{H^{-\frac{1}{2}}	}\\
		&\lesssim
		\|\,			\delta 	\uu		\,\|_{\Hh^{\frac{1}{2}}	}
		\|\,\nabla			\tilde \uu 	\,\|_{L^2				}
		\|\,			\delta 	\uu		\,\|_{\Hh^{-\frac{1}{2}}	}
		\,\lesssim\, 
		\|\,\nabla			\tilde \uu 	\,\|_{L^2				}^2
		\|\,			\delta 	\uu		\,\|_{\Hh^{-\frac{1}{2}}	}^2\,+\,
		\ee
		\|\,	\nabla 	\delta 	\uu		\,\|_{\Hh^{-\frac{1}{2}}	}^2.
	\end{align*}
	Now, we consider the contribution of the Lorentz force $\delta F \,= \,\mu_0(M \cdot \nabla \delta  H + \delta M\cdot \nabla \tilde H 	)$. It is worth to emphasize 
	that the level of regularity $\Hh^{-1/2}$ is a-priori in accordance with the regularity provided by the standard $L^2$-energy, $F_1,\,F_2\in L^1_{loc}(\RR_+,\,\Hh^{-1/2}(\RR^2))$. 
	
	\noindent
	We handle the first term of $\delta F$ as follows: 
	\begin{align*}
		\mu_0&\langle	\,	M \cdot \nabla \delta  H		,	\delta \uu	\,	\rangle_{\Hh^{-\frac{1}{2}}}
		\,=\,
		-\mu_0\langle	\, (\Div\,M  )\delta  H			,	\delta \uu			\,	\rangle_{\Hh^{-\frac{1}{2}}}\,
		-\mu_0\langle	\,	M \otimes 	 \delta  H		,	\nabla	\delta \uu	\,	\rangle_{\Hh^{-\frac{1}{2}}}\,\\
		&\lesssim\,
		\|\,		\Div\,M 			\,\|_{L^2				}
		\|\,		\delta  H			\,\|_{\Hh^{	\frac{1}{2}}}
		\|\,		\delta \uu			\,\|_{\Hh^{-\frac{1}{2}}}\,+\,
		\|\,		M 					\,\|_{\Hh^\frac{3}{4}}						
		\|\,		\delta  H			\,\|_{\Hh^{-\frac{1}{4}}}
		\|\,		\nabla	\delta \uu	\,\|_{\Hh^{-\frac{1}{2}}}\\
		&\lesssim\,
		\|\,		\nabla M 			\,\|_{L^2					}
		\|\,		\nabla	\delta  H	\,\|_{\Hh^{	-\frac{1}{2}}	}
		\|\,		\delta \uu			\,\|_{\Hh^{	-\frac{1}{2}}	}\,+\,
		\|\,		M 					\,\|_{L^2}^{\frac{1}{4}}
		\|\,		\nabla M 			\,\|_{L^2}^{\frac{3}{4}}		{\scriptstyle\times}\\&\hspace{5cm}{\scriptstyle\times}				
		\|\,		\delta  H			\,\|_{\Hh^{-\frac{1}{2}}}^{\frac{3}{4}}						
		\|\,		\nabla \delta  H		\,\|_{\Hh^{-\frac{1}{2}}}^{\frac{1}{4}}
		\|\,		\nabla	\delta \uu	\,\|_{\Hh^{-\frac{1}{2}}}\\
		&\lesssim
		\|\,		\nabla M 			\,\|_{L^2					}^2
		\|\,				\delta  H	\,\|_{\Hh^{	-\frac{1}{2}}	}^2\,+\,
		\|\,		M 					\,\|_{L^2}^{\frac{2}{3}}
		\|\,		\nabla M 			\,\|_{L^2}^2
		\|\,		\delta  H			\,\|_{\Hh^{-\frac{1}{2}}}^2\,+\,
		\ee
		\|\,		\nabla	\delta  H	\,\|_{\Hh^{	-\frac{1}{2}}	}^2\,+\,
		\ee
		\|\,		\nabla	\delta  \uu	\,\|_{\Hh^{	-\frac{1}{2}}	}^2,
	\end{align*}
	where we have used an integration by parts and the following product law between Sobolev spaces:
	\begin{equation*}
		\Hh^{\frac{3}{4}}(\RR^2)\times \Hh^{-\frac{1}{4}}(\RR^2) \rightarrow \Hh^{-\frac{1}{2}}(\RR^2).
	\end{equation*}
	Next, the second term of $\delta F$ is handled through
	\begin{align*}
		\mu_0\langle	\,	\delta M\cdot \nabla \tilde H 			,	\delta \uu	\,	\rangle_{\Hh^{-\frac{1}{2}}}
		&\lesssim
		\|\,	\delta M	\,\|_{\Hh^{\frac{1}{2}}}
		\|\,	\nabla \tilde H 	\,\|_{L^2}
		\|\,	\delta \uu	\,\|_{\Hh^{-\frac{1}{2}}}\\
		&\lesssim\,
		\|\,	\nabla \tilde H 	\,\|_{L^2}^2
		\|\,	\delta \uu	\,\|_{\Hh^{-\frac{1}{2}}}^2\,+\,
		\ee
		\|\,	\nabla \delta \uu	\,\|_{\Hh^{-\frac{1}{2}}}^2.
	\end{align*}
	It remains to control the last term of the kinetic energy \eqref{uniq-momeq}, which can be treated by a standard Cauchy-Schwarz inequality:
	\begin{align*}
		2\zeta\langle	\,	\left(\,\begin{matrix}\hspace{0.25cm}\partial_2 \delta \omega\\-\partial_1 \delta \omega	\end{matrix}\,\right)				,	\delta \uu	\,	\rangle_{\Hh^{-\frac{1}{2}}}
		\,\lesssim\,
		\|\,\nabla \delta \omega	\,\|_{\Hh^{-\frac{1}{2}}}
		\|\,		\delta \uu	\,\|_{\Hh^{-\frac{1}{2}}}
		\lesssim
		\ee \|\,\nabla \delta \omega	\,\|_{\Hh^{-\frac{1}{2}}}^2
		\,+\,
		\|\,		\delta \uu	\,\|_{\Hh^{-\frac{1}{2}}}^2.
	\end{align*}
	Integrating all the previous estimates into the identity \eqref{uniq-momeq}, we obtain
	\begin{equation}
	\begin{aligned}\label{ineq1}
		\frac{\rho_0}{2}&\frac{\dd}{\dd t}\|\,\delta \uu\,\|_{\Hh^{-\frac{1}{2}}}^2+
		(\eta +\zeta)\|\,\nabla \delta \uu\,\|_{\Hh^{-\frac{1}{2}}}^2
		\lesssim\\
		&\lesssim
		\Big(
			1+\|\, M \,\|_{L^2}^\frac{2}{3}
		\Big)
		\|\,\nabla( \uu ,\, \tilde \uu ,\,M ,\, \tilde H )\,\|_{L^2}^2 	
		\|\, ( \delta \uu,\,\delta H)\,\|_{\Hh^{-\frac{1}{2}}}^2
		+\ee	\|\, \nabla( \delta \uu,\, \delta \omega,  \delta H)\,\|_{\Hh^{-\frac{1}{2}}}^2.
	\end{aligned}
	\end{equation}
	We now take into account the difference between the angular velocity equations for the two solutions of system \eqref{main_system}. We hence gather the 
	evolutionary equation for $\delta \omega$:
	\begin{equation}
	\begin{aligned}
		\;\rho_0 k\big( \partial_t \delta \omega\,+\delta \uu \cdot \nabla  \omega\,&+\,\tilde \uu\cdot \nabla \delta\omega\,\big) - \eta'\Delta \omega \,-\lambda' \nabla \Div\,\delta \omega\,=\\&=\,
		\mu_0\big(\,  M \times \delta H\,
		+\,\delta M\times  \tilde H \,\big)\,+\,2\zeta (\,\curl \delta \uu \,-\,2 \delta \omega\,).
	\end{aligned}
	\end{equation}
	We then proceed similarly as for proving \eqref{ineq1}: we take the $\Hh^{-\frac{1}{2}}(\RR^2)$-inner product of the above identity with $\delta \omega$ and we integrate by parts, to get:
	\begin{equation}\label{inertia}
	\begin{aligned}
		\frac{\dd}{\dd t}
		\rho_0 k
		\|\, \delta \omega			\,\|_{\Hh^{-\frac{1}{2}}}^2\,+\,
		\eta'
		\|\,\nabla \delta \omega	\,\|_{\Hh^{-\frac{1}{2}}}^2\,+\,
		\lambda'
		\|\,\Div \delta \omega	\,\|_{\Hh^{-\frac{1}{2}}}^2
		\,=\,
		\rho_0 k
		\langle	\,	\delta	\uu\cdot \nabla \omega 	,	\delta \omega		\,\rangle_{\Hh^{-\frac{1}{2}}}\,+\\+\,
		\rho_0 k
		\langle	\,	\tilde \uu \cdot \nabla	\delta \omega	,	\delta \omega		\,\rangle_{\Hh^{-\frac{1}{2}}}\,+\,
		\mu_0
		\langle	\,	\delta M\times H 					,	\delta \omega		\,\rangle_{\Hh^{-\frac{1}{2}}}\,+\,
		\mu_0
		\langle	\,	\delta M \times	\delta H				,	\delta \omega		\,\rangle_{\Hh^{-\frac{1}{2}}}\,+\\+\,
		2\zeta
		\langle	\,	\curl\delta \uu						,	\delta \omega		\,\rangle_{\Hh^{-\frac{1}{2}}}\,-\,
		4\zeta
		\|	\,	\delta\omega						\,\|_{\Hh^{-\frac{1}{2}}}.
	\end{aligned}
	\end{equation}
	We then proceed by estimating each term on the right-hand side. First we have
	\begin{align*}
		\rho_0 k\langle	\,	\delta	\uu\cdot \nabla \omega 	,	\delta \omega		\,\rangle_{\Hh^{-\frac{1}{2}}}
		\,&\lesssim\,
		\|\,\delta	\uu\cdot \nabla \omega 		\,\|_{\Hh^{-\frac{1}{2}}}
		\|\,\delta \omega							\,\|_{\Hh^{-\frac{1}{2}}}\\
		\,&\lesssim\,
		\|\,\delta	\uu								\,\|_{\Hh^{\frac{1}{2}}}
		\|\,\nabla \omega 							\,\|_{L^2}
		\|\,\delta \omega							\,\|_{\Hh^{-\frac{1}{2}}}\\
		&\lesssim\,
		\|\,\nabla \omega 							\,\|_{L^2}^2
		\|\,\delta \omega							\,\|_{\Hh^{-\frac{1}{2}}}^2\,+\,
		\ee
		\|\,\nabla \delta	\uu						\,\|_{\Hh^{-\frac{1}{2}}}.		
	\end{align*}
	Next, the free divergence condition on $\tilde \uu $ and Lemma \ref{comm-est} lead to
	\begin{align*}
		\rho_0 k\langle	\,	\tilde \uu \cdot \nabla	\delta \omega	,	\delta \omega			\,\rangle_{\Hh^{-\frac{1}{2}}}	
		\,&\lesssim\,
		\|\,	\nabla \tilde \uu 			\,\|_{L^2}
		\|\,	\delta \omega			\,\|_{\Hh^{\frac{1}{2}}}
		\|\,			 \delta \omega	\,\|_{\Hh^{-\frac{1}{2}}}\\
		&\,\lesssim\,
		\|\,	\nabla \tilde \uu 			\,\|_{L^2}^2
		\|\,	\delta \omega			\,\|_{\Hh^{-\frac{1}{2}}}^2
		\,+\,\ee
		\|\,	\nabla \delta \omega	\,\|_{\Hh^{-\frac{1}{2}}}^2.
	\end{align*}
	Now, we bound the terms related to $\mu_0$ as follows:
	\begin{align*}
		\mu_0\langle	\,	\delta M\times H 					,	\delta \omega		\,\rangle_{\Hh^{-\frac{1}{2}}}
		\,&\lesssim\,
		\|\,\delta M\,\|_{\Hh^{\frac{1}{2}}}
		\|\, H \,\|_{L^2_x}
		\|\,\delta \omega\,\|_{\Hh^{-\frac{1}{2}}}\\
		\,&\lesssim\,
		\|\,\nabla \delta M\,\|_{\Hh^{-\frac{1}{2}}}
		\|\, H \,\|_{L^2_x}
		\|\,\delta \omega\,\|_{\Hh^{-\frac{1}{2}}}\\
		\,&\lesssim\,
		\|\, H \,\|_{L^2_x}^2
		\|\,\delta \Omega\,\|_{\Hh^{-\frac{1}{2}}}^2
		\,+\,
		\ee
		\|\,\delta \omega\,\|_{\Hh^{-\frac{1}{2}}}^2,
	\end{align*}
	together with
	\begin{align*}
		\mu_0\langle	\,	\delta M \times	\delta H				,	\delta \omega		\,\rangle_{\Hh^{-\frac{1}{2}}}\,
		&\lesssim\,	
		\|\, \delta M  \,\|_{L^2_x}
		\|\,\delta H\,\|_{\Hh^{\frac{1}{2}}}			
		\|\,\delta \omega\,\|_{\Hh^{-\frac{1}{2}}}\\
		\,&\lesssim\,
		\|\, \delta M  \,\|_{L^2}
		\|\,\nabla \delta H\,\|_{\Hh^{-\frac{1}{2}}}			
		\|\,\delta \omega\,\|_{\Hh^{-\frac{1}{2}}},
		\\
		\,&\lesssim\,
		\|\, \delta M  \,\|_{L^2}^2
		\|\,\delta \omega\,\|_{\Hh^{-\frac{1}{2}}}^2
		\,+\,
		\ee
		\|\,\nabla \delta H\,\|_{\Hh^{-\frac{1}{2}}}^2.
	\end{align*}
	Finally, the last term in \eqref{inertia} can be handled through
	\begin{align*}
		2\zeta\langle	\,	\curl\delta \uu						,	\delta \omega		\,\rangle_{\Hh^{-\frac{1}{2}}}
		\,\lesssim\,
		\|\,\nabla \delta \uu		\,\|_{\Hh^{-\frac{1}{2}}}
		\|\,\delta \omega	\,\|_{\Hh^{-\frac{1}{2}}}
		\,\lesssim\,
		\|\,\delta \omega	\,\|_{\Hh^{-\frac{1}{2}}}^2
		\,+\,
		\ee
		\|\,\nabla \delta \uu		\,\|_{\Hh^{-\frac{1}{2}}}^2.
	\end{align*}
	Thus, summarizing all previous estimates and integrating them in \eqref{inertia} we get
	\begin{equation}\label{ineq2}
	\begin{aligned}
		\frac{\dd}{\dd t}
		\rho_0 k
		\|\, \delta \omega			\,\|_{\Hh^{-\frac{1}{2}}}^2\,&+\,
		\eta'
		\|\,\nabla \delta \omega	\,\|_{\Hh^{-\frac{1}{2}}}^2\,+\,
		\lambda'
		\|\,\Div \delta \omega	\,\|_{\Hh^{-\frac{1}{2}}}^2
		\,
		\lesssim
		\Big(1+	\|\,( H ,\,\delta M )\,\|_{L^2}^2\,+\\&+\,
			\|\,\nabla(\tilde \uu ,\,\omega )\,\|_{L^2}^2 	
		\Big)\|\, \delta \omega\,\|_{\Hh^{-\frac{1}{2}}}^2
		+\ee	\|\, \nabla( \delta \uu,\, \delta \omega,\,  \delta M,  \delta H)\,\|_{\Hh^{-\frac{1}{2}}}^2.
	\end{aligned}
	\end{equation}

	\noindent
	We now take into account the difference between the magnetization equations of the two solutions, namely
	\begin{equation*}\label{magnet-eq}
	\begin{aligned}
		\;\partial_t \delta M \,+\,\uu \cdot \nabla \delta M +\,\delta \uu\cdot \nabla \delta M  \,- \sigma \Delta \delta M\,=\,\left(\,\begin{matrix}\hspace{0.25cm}M_2\\-M_1 	\end{matrix}\,\right)\delta \omega\,+\,
		\left(\,\begin{matrix}\hspace{0.25cm}\delta M_2\\- \delta M_1	\end{matrix}\,\right)\tilde \omega - \frac{1}{\tau}(\, \delta M-\chi_0 \delta H\,).
	\end{aligned}
	\end{equation*}
	We then take into account the $\Hh^{-\frac{1}{2}}$-inner product between the above identity and $\delta M$, to obtain
	\begin{equation}\label{magneto-energy}
	\begin{aligned}
		\frac{\dd}{\dd t}
		\|\,	&\delta M	\,\|_{L^2_x}^2\,+\,
		\sigma \|\,\nabla M\,\|_{L^2_x}^2
		\,=\,
		-\langle\,	\uu \cdot \nabla \delta M	,	\delta M		\,\rangle_{\Hh^{-\frac{1}{2}}}	-
		\langle\,	\delta \uu\cdot \nabla \delta M 	,	\delta M		\,\rangle_{\Hh^{-\frac{1}{2}}}	+	\\&+
		\langle\,	\delta \omega \left(\,\begin{matrix}\hspace{0.25cm}M_2\\-M_1	\end{matrix}\,\right)		,	\delta M		\,\rangle_{\Hh^{-\frac{1}{2}}}	+
		\langle\,	\tilde \omega  \left(\,\begin{matrix}\hspace{0.25cm}\delta M_2\\- \delta M_1	\end{matrix}\,\right)		,	\delta M		\,\rangle_{\Hh^{-\frac{1}{2}}}	-\frac{1}{\tau}
		\|\,	\delta M						\,\|_{\Hh^{-\frac{1}{2}}}^2	+\frac{\chi_0}{\tau}
		\langle\,	\delta H						,	\delta M		\,\rangle_{\Hh^{-\frac{1}{2}}}.	
	\end{aligned}
	\end{equation}
	We hence proceed to estimate each term on the right-hand side. First, the divergence-free condition on $\uu $ together with Lemma \ref{comm-est} imply
	\begin{align*}
		\langle\,	\uu \cdot \nabla \delta M	,	\delta M		\,\rangle_{\Hh^{-\frac{1}{2}}}
		\,&\lesssim\,
		\|\,\nabla \uu \,\|_{L^2}^2
		\|\, \delta M\,\|_{\Hh^{-\frac{1}{2}}}^2\,+\,\ee
		\|\, \nabla \delta M\,\|_{\Hh^{-\frac{1}{2}}}^2.
	\end{align*}
	The second convective term on the right-hand side of \eqref{magneto-energy} is handled by
	\begin{equation*}
		\langle\,\delta \uu\cdot \nabla \delta M 	,	\delta M		\,\rangle_{\Hh^{-\frac{1}{2}}} 
		\,\lesssim\,
		\|\,\delta \uu	\,\|_{\Hh^{\frac{1}{2}}}
		\|\,\nabla \delta M  	\,\|_{L^2}
		\|\,\delta M		\,\|_{\Hh^{-\frac{1}{2}}}
		\,\lesssim\,
		\|\,\nabla \delta M  	\,\|_{L^2}^2
		\|\,\delta M		\,\|_{\Hh^{-\frac{1}{2}}}^2
		+\ee
		\|\,\nabla \delta \uu	\,\|_{\Hh^{\frac{1}{2}}}^2.
	\end{equation*}
	Furthermore, we observe that
	\begin{align*}
		\langle\,	\delta \omega \left(\,\begin{matrix}\hspace{0.25cm}M_2\\-M_1	\end{matrix}\,\right)		,	\delta M		\,\rangle_{\Hh^{-\frac{1}{2}}}
		\,\lesssim\,
		\|\,\delta \omega	 	\,\|_{\Hh^{\frac{1}{2}}}
		\|\,\,M  				\,\|_{L^2}
		\|\, \delta M			\,\|_{\Hh^{-\frac{1}{2}}}
		\,\lesssim\,
		\|\,\,M  				\,\|_{L^2}^2
		\|\, \delta M			\,\|_{\Hh^{-\frac{1}{2}}}^2+
		\ee
		\|\,\nabla \delta \omega	 	\,\|_{\Hh^{\frac{1}{2}}}^2,
	\end{align*}
	and that
	\begin{align*}
		\langle\,	\tilde \omega  \left(\,\begin{matrix}\hspace{0.25cm}\delta M_2\\- \delta M_1	\end{matrix}\,\right)		,	\delta M		\,\rangle_{\Hh^{-\frac{1}{2}}}
		\,\lesssim\,
		\|\,	\delta M\,		\,\|_{\Hh^{\frac{1}{2}}}
		\|\,		\tilde \omega 		\,\|_{L^2}
		\|\,	\delta M			\,\|_{\Hh^{-\frac{1}{2}}}
		\,\lesssim\,
		\|\,		\tilde \omega 		\,\|_{L^2}^2
		\|\,	\delta M			\,\|_{\Hh^{-\frac{1}{2}}}^2
		+\ee
		\|\,\nabla 	\delta M\,		\,\|_{\Hh^{-\frac{1}{2}}}^2.
	\end{align*}
	Finally, the last term of \eqref{magneto-energy} is controlled as follows:
	\begin{align*}
		\langle\,	\delta H						,	\delta M		\,\rangle_{\Hh^{-\frac{1}{2}}}
		\,\lesssim\,
		\|  \,	\delta H		\,\|_{\Hh^{-	\frac{1}{2}}}^2+
		\|	\,	\delta M		\,\|_{\Hh^{-	\frac{1}{2}}}^2.
	\end{align*}
	Integrating all previous estimates together with the identity \eqref{magneto-energy}, we finally obtain
	\begin{equation}\label{ineq3}
	\begin{aligned}
		\frac{\dd}{\dd t}
		\|\,\delta M	\,\|_{L^2_x}^2\,&+\,
		\sigma \|\,\nabla M\,\|_{L^2_x}^2
		\,\lesssim\,
		\Big(1+	\|\,( M ,\,\tilde \omega )\,\|_{L^2}^2\,+\\&+\,
			\|\,\nabla( \uu ,\,\delta M )\,\|_{L^2}^2 	
		\Big)
			\|\, ( \delta M,  \delta H)\,\|_{\Hh^{-\frac{1}{2}}}^2
		+\ee	\|\, \nabla( \delta \uu,\, \delta \Omega,\,  \delta M)\,\|_{\Hh^{-\frac{1}{2}}}^2.
	\end{aligned}	
	\end{equation}

	\noindent We are now in the position to get a final bound for the $\Hh^{-1/2}$-energy and its dissipation:
	\begin{align*}
		\delta \EE_{-\frac{1}{2}}(t)\,&:=\,	\frac{\rho_0}{2}\|\, \delta \uu(t)\,\|_{\Hh^{-\frac{1}{2}}}^2\,+\,
							\frac{\rho_0 k }{2}\|\,\delta \Omega(t)\,\|_{\Hh^{-\frac{1}{2}}}^2\,+\,
							\frac{1}{2}\|\,\delta M(t) \,\|_{\Hh^{-\frac{1}{2}}}^2,\\
		\delta \DD_{-\frac{1}{2}}(t)\,&:=\,(\eta + \zeta)
		\|	\,	\nabla	\delta \uu		\,	\|_{\Hh^{-\frac{1}{2}}}^2	\, +\,
		\eta'
		\|	\,	\nabla	\delta \omega	\,	\|_{\Hh^{-\frac{1}{2}}}^2	\,+\,
		\lambda'
		\|	\,	\Div\,	\delta \omega	\,	\|_{\Hh^{-\frac{1}{2}}}^2	\,+\,
		\sigma
		\|	\,	\nabla	\delta M			\,	\|_{\Hh^{-\frac{1}{2}}}^2	
	\end{align*}
	we can integrate all the previous estimates into the following inequality:
	\begin{equation}\label{ineq-almost-finished}
	\begin{aligned}
		\frac{\dd}{\dd t}
		\delta \EE_{-\frac{1}{2}}(t)\,+\,
		\delta \DD_{-\frac{1}{2}}(t)
		\,\lesssim
		f(t)\Big(\delta \EE_{-\frac{1}{2}}(t)\,+\,\| \, \delta H(t)\,\|_{\Hh^{-\frac{1}{2}}}^2\Big)
		+\ee	\delta \DD_{-\frac{1}{2}}(t)\,+\,\ee\| \, \nabla \delta H(t)\,\|_{\Hh^{-\frac{1}{2}}}^2,
	\end{aligned}
	\end{equation}
	for a suitable function $f(t)$ in $L^1_{loc}(\RR_+)$. To finally conclude  making use of a standard Gronwall inequality, 
	we need to reformulate the terms depending on the effective magnetizing field $\delta H$ in terms of the energy $\delta \EE_{-1/2}(t)$ and its 
	dissipation $\delta \DD_{-1/2}(t)$. 
	This is achieved through the following Lemma:
	\begin{lemma}\label{lemma-ineqHM-uniq}
		The $\Hh^{-1/2}(\RR^2)$-norms of $\delta H$ and $\nabla \delta H$ are bounded by
		\begin{equation}\label{ineq1}
			\|\, \delta H \,\|_{\Hh^{-\frac{1}{2}}} \,\lesssim \,\|\, \delta M \,\|_{\Hh^{-\frac{1}{2}}}\hspace{4cm}\|\,	\nabla \delta H	\,\|_{\Hh^{-\frac{1}{2}}}\,=\, \|\,\Div \delta M\,\|_{\Hh^{-\frac{1}{2}}}.
		\end{equation}
	\end{lemma}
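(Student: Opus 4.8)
The plan is to observe that both identities in Lemma~\ref{lemma-ineqHM-uniq} are purely algebraic consequences of the magnetostatic part of \eqref{main_system} once one passes to the Fourier side: the strategy is to make the dependence of $\delta H$ on $\delta M$ completely explicit and then read off each bound as an elementary pointwise inequality between Fourier transforms.

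First I would record what the magnetostatic equations give for the difference of two weak solutions sharing the same external force $F$. Subtracting $\Div(M+H)=F$ and $\curl H=0$ from the corresponding equations for $(\tilde M,\tilde H)$ yields $\Div(\delta M+\delta H)=0$ and $\curl\,\delta H=0$ on $\RR^2$. Equivalently --- and this is exactly formula \eqref{eq:def_Hn} with the source contribution cancelling in the difference --- one has $\delta H=-\cQ\,\delta M$, where $\cQ=\Delta^{-1}\nabla\Div$ is the projector onto gradient vector fields, with Fourier symbol $\xi\otimes\xi/|\xi|^2$. Concretely, $\curl\,\delta H=0$ forces $\widehat{\delta H}(\xi)$ to be parallel to $\xi$ for a.e.\ $\xi$, and then the divergence relation $\xi\cdot\bigl(\widehat{\delta M}(\xi)+\widehat{\delta H}(\xi)\bigr)=0$ pins down the scalar factor, giving
\begin{equation*}
	\widehat{\delta H}(\xi)\,=\,-\,\frac{\xi\,\bigl(\xi\cdot\widehat{\delta M}(\xi)\bigr)}{|\xi|^2}\qquad\text{for a.e. }\xi\in\RR^2.
\end{equation*}

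From here the first estimate is immediate: by Cauchy--Schwarz $|\xi\cdot\widehat{\delta M}(\xi)|\le|\xi|\,|\widehat{\delta M}(\xi)|$, hence $|\widehat{\delta H}(\xi)|\le|\widehat{\delta M}(\xi)|$ pointwise; multiplying by $|\xi|^{-1/2}$, squaring and integrating gives $\|\,\delta H\,\|_{\Hh^{-1/2}}\le\|\,\delta M\,\|_{\Hh^{-1/2}}$ (equivalently, this is the boundedness of the orthogonal projector $\cQ$ on every homogeneous Sobolev space). For the second identity I would use $|\widehat{\nabla\delta H}(\xi)|^2=|\xi|^2|\widehat{\delta H}(\xi)|^2$ together with the explicit formula above to get $|\xi|^2|\widehat{\delta H}(\xi)|^2=|\xi\cdot\widehat{\delta M}(\xi)|^2=|\widehat{\Div\,\delta M}(\xi)|^2$; multiplying by $|\xi|^{-1}$ and integrating yields $\|\,\nabla\delta H\,\|_{\Hh^{-1/2}}=\|\,\Div\,\delta M\,\|_{\Hh^{-1/2}}$, with genuine equality rather than just a bound.

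There is essentially no analytic obstacle here --- the whole Lemma is a Fourier-multiplier computation --- and the only point deserving a word of care is the well-definedness of the map $\delta M\mapsto\delta H$ near $\xi=0$, i.e.\ the absence of a harmonic ambiguity when solving the magnetostatic system for $\delta H$ with vanishing source. This is harmless because we work in the homogeneous space $\Hh^{-1/2}(\RR^2)$, where the representative is unique, and because along the difference of two weak solutions $\delta M$ lies in $L^2(0,T;\Hh^1(\RR^2))\cap L^\infty(0,T;L^2(\RR^2))$, so all the quantities above are finite for almost every $t$.
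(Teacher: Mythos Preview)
Your argument is correct. It differs in presentation from the paper's proof, though the underlying content is the same. The paper introduces the scalar potential $\delta\phi$ with $\delta H=\nabla\delta\phi$, so that the magnetostatic relation becomes $\Delta\delta\phi=-\Div\,\delta M$, and then derives both bounds by taking $\Hh^{-1/2}$ inner products (with $\delta\phi$ for the first inequality, using a Young inequality and absorption; with $\Delta\delta\phi$ for the second, using integration by parts). You instead write down the explicit Fourier symbol $\widehat{\delta H}(\xi)=-\xi(\xi\cdot\widehat{\delta M}(\xi))/|\xi|^2$ and read off both bounds pointwise. Your route is more elementary and yields the first estimate with constant $1$ rather than via an absorption argument; the paper's route stays within the energy/inner-product formalism used throughout Section~\ref{sec-uniq} and avoids appealing to the Fourier representation. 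Either is perfectly adequate here.
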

	\begin{proof}
		The result is a straightforward consequence of the the free-curl condition on $\delta H$: introducing the potential function $\delta \phi$ satisfying
		$
			\nabla \delta \phi\,=\,\delta H
		$, the magnetostatic equation $\Div\,\delta H\,+\,\Div\,\delta M\,=\,0$ reduces to
		$
			\Delta \delta \phi\,=\, \Div\, \delta M
		$. Hence, taking the $\Hh^{-1/2}$-inner product together with $\delta \phi$ leads to 
		\begin{align*}
			\|\,\delta H\,\|_{\Hh^{-\frac{1}{2}}}\,&=\,-\langle \,\Div\, \delta H,\,\delta \phi\,\rangle_{\Hh^{-\frac{1}{2}}}\,=\,\langle \,\Div\, \delta M,\,\delta \phi\,\rangle_{\Hh^{-\frac{1}{2}}}\\&=
			\,\langle \, \delta M,\,\delta H\,\rangle_{\Hh^{-\frac{1}{2}}}\,\lesssim\,\|\,\delta M\,\|_{\Hh^{-\frac{1}{2}}}^2\,+\,\tilde \ee \|\,\delta H\,\|_{\Hh^{-\frac{1}{2}}}^2,
		\end{align*}
		for a suitable small positive constant $\tilde \ee$. The first inequality is then achieved by absorbing the last term on the right-hand side by the left-hand side.
		
		\noindent In order to get the second inequality, it is sufficient to multiply the magnetostatic equation $\Div\,\delta H = - \Div\,\delta M$ by $\Delta \delta \phi$ in $\Hh^{-1/2}$. Hence, thanks to an integration by parts, we gather
		\begin{align*}
			\|\,\nabla \delta H\,\|_{\Hh^{-\frac{1}{2}}}
			\,&=\,-\langle\,	\delta H,\,\Delta \delta H\, \rangle_{\Hh^{-\frac{1}{2}}}
			\,=\,-\langle\, 	\Div\,\delta M,\,\Delta \delta \phi\, \rangle_{\Hh^{-\frac{1}{2}}}
			\\&=\,-\langle\, \Div\,\delta M ,\,\Div\, \delta H,\rangle_{\Hh^{-\frac{1}{2}}}
			\,=\,\|\,\Div\, \delta M\,\|_{\Hh^{-\frac{1}{2}}},
		\end{align*}
		which concludes the proof of the lemma.
	\end{proof}
	\noindent
	Thanks to Lemma \ref{lemma-ineqHM-uniq},  inequality \eqref{ineq-almost-finished} consequently reduces to
	\begin{align*}
		\frac{\dd}{\dd t}
		\delta \EE_{-\frac{1}{2}}(t)\,+\,
		\delta \DD_{-\frac{1}{2}}(t)
		\,\lesssim
		f(t)\delta \EE_{-\frac{1}{2}}(t)
		+\ee	\delta \DD_{-\frac{1}{2}}(t).
	\end{align*}
	Hence, assuming the positive parameter $\ee$ small enough, we can absorb the last term in the above inequality by the left-hand side. We thus finally deduce 
	that
	\begin{equation*}
		\frac{\dd}{\dd t}
		\delta \EE_{-\frac{1}{2}}(t)
		\,\lesssim\,
		f(t)\delta \EE_{-\frac{1}{2}}(t)
	\end{equation*}
	Then, thanks to the initial condition $\delta \EE(0)=0$, the Gronwall inequality yields $\delta \EE(t)$ to be constantly null, especially
	\begin{equation*}
		\delta \uu\,=\, \uu \,-\,\tilde \uu \,=\,0\quad \delta \omega \,=\, \omega \, -\, \tilde \omega \, =\,0
		\quad \delta M\,=\,M\,-\,\tilde M\,=\,0\quad \delta H\,=\,H \,-\,\tilde H\,=\,0,
	\end{equation*}
	and this concludes the proof of Theorem \ref{thm:uniqueness}.

	\section{Long-time dynamics} \label{sec:long-time}
	\noindent
	The purpose of this section is to establish dispersion properties for solutions $(\uu,\,\omega,\,M,\,H)$ to the non-linear system \eqref{main_system} and to prove Theorem \ref{thm:long-time-dyn}. For the sake of a unified presentation we recall here the statement of Theorem \ref{thm:long-time-dyn};

	\begin{theorem}\label{thm:long-time-dyn2}
		Let $(\uu_0,\,\omega_0,\,M_0,\,H_0)$ be in $L^2(\RR^2)$ and $F$ in $W^{1,2}_{loc}(\RR_+, \Hh^{-1}(\RR^2))$.
		Denote by $(\uu,\,\omega,\,M,\,H)$ the unique weak solution to \eqref{main_system}  
		given by Theorem \ref{thm:uniqueness}.	Suppose that there exists a positive constants $K$ and an 
		exponent $\eta\in ]0,1[$ such that, for almost every $t>0$ one has
		\begin{equation}\label{eq:long_time_integrability_F}
			\|\,F(t)\,\|_{L^2(\RR^2)}^2
			\,+\,
			\|\,F(t)\,\|_{\Hh^{-1}(\RR^2)}^2
			+\,
			\|\,\partial_t F(t)\,\|_{H^{-1}(\RR^2)}^2\,\leq\,\frac{K}{(1+t)^{1+\eta}}
		\end{equation}
		Then, for any $\alpha<\eta$ there exists a constant $C_\alpha$ such that the following decay property is satisfied
		\begin{equation*}
			\|\,(\uu(t),\,\omega(t),\,M(t),\,H(t))\,\|_{L^2(\RR^2)}\leq\,\frac{C}{(1+t)^{\alpha}}.
		\end{equation*}
	\end{theorem}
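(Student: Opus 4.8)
\emph{Proof strategy.} The argument is an adaptation of M.\ Schonbek's Fourier splitting method. The first step is to recast the global energy estimate \eqref{eq:L2_energy_bound} in differential form: reasoning as in \cite[Lemma 3.2]{Stefano2} one obtains, for almost every $t>0$,
\begin{equation*}
	\frac{\dd}{\dd t}\EE(t)\,+\,c_0\,\DD(t)\,\leq\, C\,\Phi_F(t),\qquad
	\Phi_F(t):=\norm{F(t)}_{L^2}^2+\norm{F(t)}_{\Hh^{-1}}^2+\norm{\partial_t F(t)}_{\Hh^{-1}}^2 .
\end{equation*}
By the hypothesis \eqref{eq:long_time_integrability_F} we have $\Phi_F(t)\leq K(1+t)^{-1-\eta}$, which is integrable on $\RR_+$; in particular $\EE$ is globally bounded, $\int_0^\infty\DD(s)\,\dd s<\infty$, and hence $\nabla\uu,\nabla\omega,\nabla M\in L^2(\RR_+;L^2)$ together with $M,H\in L^2(\RR_+;L^2)$. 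This last remark already shows that $\DD$ fully damps the magnetizing and demagnetizing fields, while it controls $\uu$ and $\omega$ only up to their lowest frequencies, which is exactly what forces the splitting argument below.

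For any radius $g>0$ and any field $f$ one has $\norm{\nabla f}_{L^2}^2\geq g^2\pare{\norm{f}_{L^2}^2-\int_{\av{\xi}\leq g}\av{\widehat f(\xi)}^2\,\dd\xi}$. Applying this to $\uu$ and $\omega$ and using the coercivity of $\DD$, one finds $c_1>0$ and a threshold $\theta>0$ such that, whenever $g(t)^2\leq\theta$,
\begin{equation*}
	c_0\,\DD(t)\,\geq\, c_1\,g(t)^2\,\EE(t)\,-\,c_1\,g(t)^2\,\mathcal L(t),\qquad
	\mathcal L(t):=\int_{\av{\xi}\leq g(t)}\pare{\av{\widehat\uu(t,\xi)}^2+\av{\widehat\omega(t,\xi)}^2}\dd\xi .
\end{equation*}
One then chooses a shrinking ball $g(t)^2:=\tfrac{2p}{c_1(1+t)}$ with $p$ large (the size of the ball being fixed, through $p$, by the decay rate $1+\eta$ of $F$); there is $T_0=T_0(p)$ with $g(t)^2\leq\theta$ for $t\geq T_0$, and for such $t$ one gets $\frac{\dd}{\dd t}\EE(t)+\frac{2p}{1+t}\EE(t)\leq\frac{2p}{1+t}\mathcal L(t)+CK(1+t)^{-1-\eta}$. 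Multiplying by $(1+t)^{2p}$, integrating from $T_0$, and using $2p>\eta$, this yields
\begin{equation*}
	\EE(t)\,\lesssim\,(1+t)^{-2p}\,+\,(1+t)^{-2p}\int_{T_0}^t(1+s)^{2p-1}\mathcal L(s)\,\dd s\,+\,CK\,(1+t)^{-\eta}.
\end{equation*}

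The heart of the proof is the control of $\mathcal L(t)$. One writes the equations for $\uu,\omega,M$ in Fourier variables, eliminates the demagnetizing field via $H=-\cQ M+\cG$ with $\cG=\nabla\Delta^{-1}F$, and uses the Duhamel representation. Estimating the (possibly damped) heat semigroups by $1$, using $\av{\widehat{fh}(\xi)}\leq\norm{fh}_{L^1}\leq\norm{f}_{L^2}\norm{h}_{L^2}$ for the quadratic interactions $\uu\otimes\uu$, $\uu\otimes M$, $M\otimes H$, exploiting the extra factor $\av{\xi}$ carried by the $\uu$--$\omega$ coupling and by the $\cQ$--part of the Lorentz force, and handling the residual forcing contributions by a time Cauchy--Schwarz together with $\int\av{\xi}^{-2}\av{\widehat F(s,\xi)}^2\,\dd\xi=\norm{F(s)}_{\Hh^{-1}}^2$, one reaches a bound of the schematic form
\begin{equation*}
	\mathcal L(t)\,\lesssim\,\int_{\av{\xi}\leq g(t)}\pare{\av{\widehat\uu_0}^2+\av{\widehat\omega_0}^2}\dd\xi\,+\,g(t)^2\int_0^t\pare{\EE(s)^2+\norm{F(s)}_{L^2}^2\,\EE(s)}\dd s\,+\,(\text{forcing remainder}).
\end{equation*}
The first term tends to $0$ as $t\to\infty$ by dominated convergence, and, after insertion into the previous display, contributes a term of lower order; the other two are estimated through a bootstrap that feeds a provisional algebraic bound $\EE(s)\lesssim(1+s)^{-2\beta}$ back into the right-hand side and improves $\beta$ at each step. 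After finitely many iterations this gives $\EE(t)\leq C_\alpha(1+t)^{-2\alpha}$ for every $\alpha<\eta$. Since $\EE(t)^{1/2}$ dominates $\norm{(\uu,\omega,M)(t)}_{L^2}$ and the magnetostatic relation gives $\norm{H(t)}_{L^2}\lesssim\norm{M(t)}_{L^2}+\norm{F(t)}_{\Hh^{-1}}$, the announced decay follows.

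The main obstacle is the closure of this bootstrap. Since $\DD$ carries no information on the low frequencies of $\uu$ (and, in the stated form, of $\omega$), these modes must be recovered entirely from the Duhamel formula, and one must verify that the nonlinear coupling --- in particular the ``extraneous'' top-order term $M\cdot\nabla H$ that already required a cancellation at the $L^2$ level, and the $\uu$--$\omega$ interaction --- does not spoil the low-frequency decay; this rests on the gain of one derivative in those terms and on the genuine damping of the $\omega$--, $M$-- and $H$--modes. A second, more structural, point is that an initial datum merely in $L^2(\RR^2)$ carries no a priori decay rate: the rate in the conclusion is produced by the integrability assumption \eqref{eq:long_time_integrability_F} acting through the time-dependent localization $g(t)$, the data only contributing a term that shrinks with the ball and is reabsorbed along the bootstrap.
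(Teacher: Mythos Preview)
Your proposal follows the same overall Schonbek Fourier--splitting strategy as the paper, but the two arguments diverge at the low--frequency step, and the paper's route is considerably shorter.

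The paper does \emph{not} use Duhamel plus a bootstrap. Instead, it first establishes a pointwise Fourier energy estimate (Proposition~\ref{prop-pointwise-Fourier-estimate}): multiplying each equation in Fourier by $\overline{\hat\uu},\overline{\hat\omega},\overline{\hat M}$, integrating in time, and bounding the bilinear terms by $|\FF_x(f\cdot\nabla g)(\xi)|\leq\|f\cdot\nabla g\|_{L^1}\leq\|f\|_{L^2}\|\nabla g\|_{L^2}$, one gets a \emph{uniform-in-time} pointwise bound
\[
|\hat\uu(\xi,t)|^2+|\hat\omega(\xi,t)|^2+|\hat M(\xi,t)|^2\;\leq\;|\hat\uu_0(\xi)|^2+|\hat\omega_0(\xi)|^2+|\hat M_0(\xi)|^2+C+\int_0^t|\xi|^{-2}|\hat F(\xi,t')|^2\,\dd t'.
\]
The Lorentz force $M\cdot\nabla H$ is handled here without any extra $|\xi|$ factor: one simply uses $\int_0^t|\FF_x(M\cdot\nabla H)|^2\leq\|M\|_{L^\infty_tL^2}^2\int_0^t\|\nabla H\|_{L^2}^2\leq C$, which is finite by the basic energy bound. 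With this in hand, the low-frequency integral satisfies $\int_{|\xi|\leq\nu(t)}r(\xi,t)\,\dd\xi\lesssim \nu(t)^2$ (area of the ball times a bounded integrand), and a \emph{single} application of the integrating-factor argument with $\nu(t)^2=\alpha/(1+t)$ gives the decay directly---no iteration.

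Your Duhamel-plus-bootstrap approach is the more classical Navier--Stokes variant and can be made to work, but two points in your sketch deserve care. First, the claim that the $\cQ$-part of the Lorentz force carries an extra factor $|\xi|$ at the output frequency is not correct in general (since $\div M\neq 0$ one cannot write $M\cdot\nabla H$ as a pure divergence); the right fix is exactly the $L^2_tL^2$ integrability of $\nabla H$ used above. Second, once you observe that the pointwise bound on $|\hat\uu|^2+|\hat\omega|^2$ is uniform in $t$, you already have $\mathcal L(t)\lesssim g(t)^2$ and the bootstrap becomes superfluous; your schematic bound $\mathcal L(t)\lesssim g(t)^2\int_0^t\EE(s)^2\,\dd s$ understates what is actually available and forces an unnecessary iteration. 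In short, your outline is sound, but replacing the Duhamel/bootstrap machinery by the pointwise Fourier energy estimate of Proposition~\ref{prop-pointwise-Fourier-estimate} both simplifies the argument and removes the delicate point about the Lorentz force.
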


	\noindent
	The first preliminary result which is needed in our study is the following pointwise estimate for the effective magnetizing fields.
	\begin{lemma}\label{lemma-ineq-Hxi} Assume $M$ in $L^2_{loc}(\RR_+, L^2(\RR^2))$ and the source term $F$ in $L^2(\RR_+, \Hh^{-1}(\RR^2))$. 
	Then the solution $H = H(t,x)$ of 
	magnetostatic equation
	\begin{equation*}
		\Div\,(\,H\,+\,M\,)\,=\,F,\quad\quad\text{with}\quad\quad \curl\, H\,=\,0,
	\end{equation*}
	satisfies
	\begin{equation}\label{ineq-Hxi}
			|\,\hat H\pare{\xi, \ t}\,|^2\,\leq \,2 |\,\hat M\pare{\xi, \ t}\,|^2\,+\,|\xi|^{-2}|\,\hat F\pare{\xi, \ t}\,|^2,
	\end{equation}
	for any time $t>0$ and $\xi\in\RR^2$.
	\end{lemma}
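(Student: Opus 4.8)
The plan is to work entirely on the Fourier side in the space variable, using the normalization of $\cF_x$ fixed in the introduction, for which $\widehat{\partial_j g}(\xi)=2\pi i\,\xi_j\,\hat g(\xi)$. The point is that the two scalar constraints $\curl H=0$ and $\Div(H+M)=F$ determine $\hat H(\xi,t)$ \emph{algebraically} from $\hat M(\xi,t)$ and $\hat F(\xi,t)$, and \eqref{ineq-Hxi} then drops out of a one-line elementary inequality.

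First I would translate the curl-free condition $\curl H=\partial_1 H_2-\partial_2 H_1=0$ into frequency: it says $\xi_1\hat H_2(\xi,t)=\xi_2\hat H_1(\xi,t)$ for a.e.\ $\xi$, i.e.\ $\hat H(\xi,t)$ is colinear with $\xi$, so that for $\xi\neq 0$
\[
  \hat H(\xi,t)=\frac{\xi\cdot\hat H(\xi,t)}{|\xi|^2}\,\xi,\qquad\text{hence}\qquad
  |\hat H(\xi,t)|^2=\frac{|\xi\cdot\hat H(\xi,t)|^2}{|\xi|^2}
\]
(equivalently, one may write $H=\nabla\phi$ for a scalar potential and argue on $\hat\phi$; the null set $\{\xi=0\}$ is irrelevant). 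Next I would take $\cF_x$ of the magnetostatic equation $\Div(H+M)=F$, obtaining $2\pi i\,\xi\cdot(\hat H(\xi,t)+\hat M(\xi,t))=\hat F(\xi,t)$, hence $\xi\cdot\hat H(\xi,t)=\tfrac{1}{2\pi i}\hat F(\xi,t)-\xi\cdot\hat M(\xi,t)$. Substituting this into the identity above and using $|a-b|^2\le 2|a|^2+2|b|^2$ together with Cauchy--Schwarz $|\xi\cdot\hat M(\xi,t)|\le|\xi|\,|\hat M(\xi,t)|$ would give
\[
  |\hat H(\xi,t)|^2\le\frac{1}{|\xi|^2}\Big(\tfrac{1}{2\pi^2}|\hat F(\xi,t)|^2+2|\xi|^2|\hat M(\xi,t)|^2\Big)
  =2|\hat M(\xi,t)|^2+\tfrac{1}{2\pi^2}\,|\xi|^{-2}|\hat F(\xi,t)|^2,
\]
which, since $\tfrac{1}{2\pi^2}<1$, is bounded by $2|\hat M(\xi,t)|^2+|\xi|^{-2}|\hat F(\xi,t)|^2$, i.e.\ exactly \eqref{ineq-Hxi}.

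Since the computation is purely algebraic, the only thing to be a little careful about — and what I would regard as the (mild) main point rather than a genuine obstacle — is the legitimacy of these Fourier manipulations and the meaning of ``the solution $H$''. Under the hypotheses $M\in L^2_{loc}(\RR_+;L^2(\RR^2))$ and $F\in L^2(\RR_+;\Hh^{-1}(\RR^2))$ one has $\hat M(\cdot,t)\in L^2(\RR^2)$ and $|\xi|^{-1}\hat F(\cdot,t)\in L^2(\RR^2)$ for a.e.\ $t$, and the curl-free/divergence pair then admits a unique tempered solution $H(\cdot,t)$ whose Fourier transform is precisely the $L^2(\RR^2)$ function $\xi\mapsto\big(\tfrac{1}{2\pi i}\hat F(\xi,t)-\xi\cdot\hat M(\xi,t)\big)\xi/|\xi|^2$; I would record this explicitly (it is also what pins $H$ down in the setting of the lemma), after which \eqref{ineq-Hxi} holds for a.e.\ $(\xi,t)$, which is what the subsequent Fourier-splitting argument needs.
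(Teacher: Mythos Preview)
Your argument is correct and is essentially the same as the paper's: the paper also passes to Fourier variables via the potential $H=\nabla\phi$, writes the explicit formula $\hat H=-\tfrac{\xi\otimes\xi}{|\xi|^2}\hat M+|\xi|^{-1}\tfrac{i\xi}{|\xi|}\hat F$, and then bounds $|\hat H|^2$ by Cauchy--Schwarz. Your version is a bit more careful with the $2\pi$ from the Fourier normalization and with the well-posedness of $H$, but the route is identical.
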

	\begin{proof}	The $\curl$ of $H$ being null, we can reformulate the equation on $H$ in terms of the potential $\phi$ as follows:
		\begin{equation*}
			H\,=\,\nabla \phi\quad\Rightarrow\quad -\Delta \phi\,=\,\Div\,M\,-\,F\quad\Rightarrow\quad -\Delta H\,=\,\nabla \Div\, M \,-\, \nabla F.
		\end{equation*}
	Then, passing in Fourier variables, we recast the latter equation in the following form:
		\begin{equation*}
			H\pare{\xi, \ t}\,=\,-\frac{\xi\otimes \xi}{|\xi|^2}\hat{M}\pare{\xi, \ t} + |\xi|^{-1}\frac{i\xi}{|\xi|}\hat{F}\pare{\xi, \ t}
		\end{equation*}
		the desired inequality \eqref{ineq-Hxi} then follows multiplying both sides by $H\pare{\xi, \ t}$ and applying a Cauchy-Schwarz inequality.
	\end{proof}
	\noindent
	The second preliminary result which is needed in our study is the following one:
	\begin{prop}\label{prop-pointwise-Fourier-estimate}
		Under the hypotheses of Theorem \ref{thm:long-time-dyn2}, the solution $(\uu,\,\omega,\,M,\,H)$ satisfies the following pointwise estimate:
		\begin{align*}
			|\,\hat{\uu}\pare{\xi, \ t}\,|^2\,&+\,|\,\hat{\omega}\pare{\xi, \ t}\,|^2\,+\,|\,\hat{M}\pare{\xi, \ t}\,|^2\,+|\,\hat{H}\pare{\xi, \ t}\,|^2\,
			 \,
			\,\leq\\
			&\leq \,
			|\,\hat{\uu}_0(\xi)\,|^2\,+\,|\,\hat{\omega}_0(\xi)\,|^2\,+\,|\,\hat{M}_0(\xi)\,|^2\,
			\,+\,
			C 
			\,+\,
			\int_0^t|\,\xi|^{-2}|\,\hat{F}\pare{\xi, \ t'} |^2 \dd t',
		\end{align*}
		for almost every $t>0$ and $\xi\in \RR^2$, and for a suitable positive constant $C$ depending on the initial data $(\uu_0,\,\omega_0,\,H_0,\,M_0)$ as well as on the 
		parameter $\rho_0,\,k,\,t',\,\eta,\,\eta',\,\chi_0,\,\mu_0$.
	\end{prop}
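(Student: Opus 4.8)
The plan is to perform an $L^2$-type energy estimate for the first three equations of \eqref{main_system}, but carried out \emph{frequency by frequency}. Taking the Fourier transform in $x$, eliminating the pressure (which is orthogonal to $\hat{\uu}(\xi,\cdot)$ because of the divergence-free constraint) and inserting the magnetostatic relation in the form $\hat H(\xi,t)=-\widehat{\cQ M}(\xi,t)+\hat{\cG}(\xi,t)$, with $|\hat{\cG}(\xi,t)|=|\xi|^{-1}\,|\hat F(\xi,t)|$, one gets for each fixed $\xi$ a closed system of ordinary differential equations in $t$ for $\hat{\uu},\hat{\omega},\hat M$. Multiplying these by $\rho_0\overline{\hat{\uu}}$, $\rho_0k\,\overline{\hat{\omega}}$ and $\overline{\hat M}$ respectively, taking real parts and summing, the task reduces to a differential inequality for $Y(\xi,t):=\rho_0|\hat{\uu}(\xi,t)|^2+\rho_0k|\hat{\omega}(\xi,t)|^2+|\hat M(\xi,t)|^2$, of the shape
\begin{equation*}
	\frac12\,\frac{\dd}{\dd t}Y(\xi,t)+c\,|\xi|^2\,Y(\xi,t)\,\leq\,\re\big(\,\text{nonlinear contributions}\,\big)+\re\big(\,\text{forcing contributions}\,\big).
\end{equation*}

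The core of the argument is the control of the quadratic contributions. I would exploit the divergence structure $\uu\cdot\nabla\uu=\div(\uu\otimes\uu)$, $\uu\cdot\nabla M=\div(\uu\otimes M)$, $M\cdot\nabla H=\div(M\otimes H)-(\div M)H$ and its analogues in the $\omega$-equation, so that every convective term, once tested against the relevant conjugate, carries a factor $|\xi|$ that is absorbed into $\tfrac{c}{2}|\xi|^2Y$ by Young's inequality. The remaining $L^1_x$-norms of products --- e.g.\ $\|\uu\otimes\uu\|_{L^1_x}=\|\uu\|_{L^2}^2$, $\|M\otimes H\|_{L^1_x}\leq\|M\|_{L^2}\|H\|_{L^2}$, $\|(\div M)H\|_{L^1_x}\leq\|\nabla M\|_{L^2}\|H\|_{L^2}$ --- are, by Cauchy--Schwarz, bounded \emph{uniformly in} $\xi$ by the energy of the solution; since Theorem \ref{thm:uniqueness} provides $\uu,\omega,M,H\in L^\infty_{\loc}(\RR_+;L^2)$ together with $\nabla\uu,\nabla\omega,\nabla M,H\in L^2_{\loc}(\RR_+;L^2)$, these products are $L^1_{\loc}$ functions of time. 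The source $\cG$ appearing in the three equations is treated by Young's inequality as well, contributing $\varepsilon|\xi|^2|\hat\cdot|^2$ (absorbed into the dissipation) plus a multiple of $|\xi|^{-2}|\hat F(\xi,t)|^2$. One is thus left with
\begin{equation*}
	\frac{\dd}{\dd t}Y(\xi,t)\,\leq\,a(t)+b(t)\,Y(\xi,t)+C\,|\xi|^{-2}|\hat F(\xi,t)|^2,\qquad a,b\in L^1_{\loc}(\RR_+),
\end{equation*}
with $a,b$ depending only on the $L^2$ energy of the solution, hence, through \eqref{eq:L2_energy_bound}, on the data and on $F$. A Gronwall estimate on $[0,t]$ and the elementary bound $Y(\xi,0)\lesssim|\hat{\uu}_0(\xi)|^2+|\hat{\omega}_0(\xi)|^2+|\hat M_0(\xi)|^2$ give the estimate for $\hat{\uu},\hat{\omega},\hat M$; adding back $|\hat H(\xi,t)|^2\leq 2|\hat M(\xi,t)|^2+|\xi|^{-2}|\hat F(\xi,t)|^2$ from Lemma \ref{lemma-ineq-Hxi} yields the statement, with a constant depending on $t$, on the parameters of the system and on the data.

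I expect the main obstacle to be the Lorentz-type terms $M\cdot\nabla H$ (in the $\uu$-equation), $M\times H$ (in the $\omega$-equation) and the coupling $\tfrac{\chi_0}{\tau}H$ (in the $M$-equation), since $H$ carries no gradient control beyond the one inherited from $M$, so that a crude estimate would leave an undissipated remainder. The way around it is to combine the divergence structure above with the cancellation properties of the coupled system --- in practice, to work with the combination $|\hat M(\xi,t)|^2+\mu_0|\hat H(\xi,t)|^2$ and the magnetostatic identity, or equivalently to invoke Lemma \ref{lemma-ineq-Hxi} at the appropriate step --- so that the damping $-\tfrac{1}{\tau}(M-\chi_0H)$ remains effective and the differential inequality closes exactly in the Gronwall form displayed above.
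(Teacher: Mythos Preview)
Your approach is essentially the same as the paper's: both perform a pointwise-in-$\xi$ energy estimate on the Fourier side and control the bilinear terms via the Hausdorff--Young bound $|\widehat g(\xi)|\leq\|g\|_{L^1_x}$, together with Lemma~\ref{lemma-ineq-Hxi} for $\hat H$. The only substantive difference is tactical. The paper does \emph{not} pass to divergence form but keeps the gradient, bounding for instance $|\FF(\uu\cdot\nabla\uu)(\xi)|\leq\|\uu\cdot\nabla\uu\|_{L^1}\leq\|\uu\|_{L^2}\|\nabla\uu\|_{L^2}$ (and likewise $|\FF(M\cdot\nabla H)|\leq\|M\|_{L^2}\|\nabla H\|_{L^2}$, $|\FF(M\times H)|\leq\|M\|_{L^2}\|H\|_{L^2}$, etc.); the squared quantities such as $\|\uu\|_{L^2}^2\|\nabla\uu\|_{L^2}^2$ then lie in $L^1(\RR_+)$ thanks to the global energy bound, and the paper simply integrates the frequency-localized balance in time and collects these into the constant $C$ --- no Gronwall factor is ever introduced. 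By contrast, your divergence-form remainders like $\|\uu\otimes\uu\|_{L^1}^2=\|\uu\|_{L^2}^4$ are only in $L^\infty_{\loc}$, so your $a(t)$ is not globally integrable and your final constant will grow in $t$; the $b(t)Y$ term you write down is also absent from the paper's argument and it is not clear from your sketch which contributions you intend to place there. Both routes are nonetheless adequate for the Fourier-splitting application that follows, since the pointwise bound is only used after integration over a ball of measure $\sim\nu(t)^2\sim(1+t)^{-1}$, which neutralises at-most-linear growth of $C$ in $t$.
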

	\begin{proof}
		Passing in Fourier variables, we recast the momentum equation in the following form:
		\begin{equation*}
			\;\rho_ 0\partial_t \hat{\uu},\,+\,(\eta +\zeta)\,|\xi|^2\hat{ \uu}\,+\,i\xi \hat{\pre}\,= -\FF(\uu\cdot\nabla \uu) \,+\,\mu_0\,\FF( M\cdot \nabla H)\,
												+\,2\zeta\, \left(\,\begin{matrix}\hspace{0.25cm}i\xi_2\hat\omega\\-i\xi_1 \hat \omega	\end{matrix}\,\right)
		\end{equation*}
		Multiplying by $\hat{\uu}$, integrating in time and applying a Cauchy-Schwarz inequality, we get
		\begin{equation}\label{ineq-hatu2}
		\begin{aligned}
			\;\rho_ 0|\,\hat \uu\pare{\xi, \ t}\,|^2&+\,\Big(\frac{\eta}{2}\, +\,\zeta\,\Big)\,|\xi|^2\int_0^t|\,\hat \uu\pare{\xi, \ t'}\,|^2\,\dd\tau\leq
			\frac{\rho_0}{2}|\,\hat \uu_0(\xi)\,|^2\,+\\&+\,
			\frac{2}{\eta}\int_0^t|\,\FF(\uu\cdot\nabla \uu)\pare{\xi, \ t'}\,|^2\dd\tau\,	+													
			\, \frac{4\mu_0^2}{\eta}\int_0^t\,|\,\FF( M\cdot \nabla H)\pare{\xi, \ t'}\,|^2\dd\tau
			\,+\\&\hspace{6cm}+
			2\zeta|\,\xi\,|\int_0^t |\,\hat{\omega}\pare{\xi, \ t'}||\,\hat \uu\pare{\xi, \ t'}\,|^2\,\dd t'.
		\end{aligned}
		\end{equation}
		The standard energy space for weak solutions allows to estimate the non-linear terms on the right-hand:
		\begin{equation*}
			\frac{2}{\eta}\int_0^t	|\,\FF_x(\uu\cdot\nabla \uu)\pare{\xi, \ t'}\,|^2\dd t'
			\,\lesssim\,
			\int_0^t	\|\,\uu\pare{ t'}\cdot\nabla \uu\pare{ t'}\,\|_{L^1(\RR^2)}^2
			\,\leq\,
			\|\,\uu\,\|_{L^\infty_t L^2}^2
			\int_0^t	
			\|\,\nabla \uu\,\|_{L^2}^2
			\,\leq
			C,
		\end{equation*}
		together with
		\begin{equation*}
			\, 
			\frac{4\mu_0^2}{\eta}\,
			\int_0^t	
			|\,\FF_x( M\cdot \nabla H)\pare{\xi, \ t'}\,|^2\dd t'
			\,\leq\,
			\int_0^t	
			\|\,M\pare{ t'}\cdot \nabla H\pare{ t'}\,\|_{L^1}^2
			\,\leq\,
			\|\,M\,\|_{L^\infty_t L^2}^2\int_0^t	\|\,\nabla H\pare{ t'}\,\|_{L^2}^2\dd t'
			\,\leq\,C.
			\end{equation*}
			Plugging the last inequalities in \eqref{ineq-hatu2} eventually yields to the bound
			\begin{equation}\label{ineq-hatu2b}
			\begin{aligned}
						\frac{\rho_0}{2}|\,\hat \uu\pare{\xi, \ t}\,|^2\,&+\,\Big(\,\frac{\eta}{2}\, +\,\zeta\Big)\,|\xi|^2\,\int_0^t|\,\hat \uu\pare{\xi, \ t'}\,|^2\dd t'\,
						\leq\\&\leq
						C\Big(\,
						\frac{\rho_0}{2}|\,\hat \uu_0(\xi)\,|^2\,+\,
						\frac{\rho_0}{2}|\,\hat \uu_0(\xi)\,|^2
						\,\Big)
						\,+\,
			2\zeta |\,\xi\,|\int_0^t|\,\hat{\omega}\pare{\xi, \ t'}\,||\,\hat \uu\pare{\xi, \ t'}\,|\dd t' + C.
			\end{aligned}
			\end{equation}
			Similarly, we multiply the equation for the angular velocity 
			\begin{equation*}
				\;\rho_0 k\big( \partial_t \omega\,+\uu\cdot \nabla \omega\,\big) - \eta'\Delta \omega\,=\,\mu_0\, M\times H\,+\,2\zeta (\,\curl \uu \,-\,2\omega\,)	
			\end{equation*}
			by $\omega$, we integrate in time and we apply a standard Cauchy-Schwartz inequality, to get
			\begin{equation}\label{ineq-anghat1}
			\begin{aligned}
				\frac{\rho_0 k}{2}  |\,\hat{\omega}\pare{\xi, \ t}\,|^2+\,\Big(\,|\,\xi\,|^2\frac{\eta'}{2}+2\zeta\,\Big)\int_0^t |\,\hat{\omega}\pare{\xi, \ t'}\,|^2\dd t'\leq
				\,\frac{4\mu_0^2}{\eta'}\int_0^t|\,\FF_x( M\times H)\pare{\xi, \ t'}\,|^2\dd t'\,
				\,+\\\,+
				\int_0^t	|\,\uu\cdot \nabla \omega\pare{\xi, \ t'}\,|^2\dd t'
				\,+\,
				2\zeta|\,\xi\,|\int_0^t|\,\hat{\omega}\pare{\xi, \ t'}\,||\,\uu\pare{\xi, \ t'}\,|\dd t'.
			\end{aligned}
			\end{equation}
			We then handle the non-linear terms on the right-hand side through the relations
			\begin{equation*}
			\begin{aligned}
			\frac{2}{\eta}\int_0^t	|\,\FF_x(\uu\cdot\nabla \uu)\pare{\xi, \ t'}\,|^2\dd t'
			\,&\lesssim\,
			\int_0^t	\|\,\uu\pare{ t'}\cdot\nabla \uu\pare{ t'}\,\|_{L^1(\RR^2)}^2\\
			\,&\leq\,
			\|\,\uu\,\|_{L^\infty_t L^2}^2
			\int_0^t	
			\|\,\nabla \uu\,\|_{L^2}^2
			\,\leq
			C,\\
			\, 
			\frac{4\mu_0^2}{\eta'}\,
			\int_0^t	
			|\,\FF_x( M \times H)\pare{\xi, \ t'}\,|\dd t'
			\,&\leq\,
			\int_0^t	
			\|\,M\pare{ t'} \times H\pare{ t'}\,\|_{L^1}^2\dd t'\\
			\,&\leq\,
			\|\,H\,\|_{L^\infty_t L^2}^2\int_0^t	\|\,M\pare{ t'}\,\|_{L^2}^2\dd t'
			\,\leq\,C.	
			\end{aligned}
			\end{equation*}
			Hence, plugging the above estimates into \eqref{ineq-anghat1} leads to
			\begin{equation}\label{ineq-anghat2}
			\begin{aligned}
						\frac{\rho_0 k}{2}  |\,\hat{\omega}\pare{\xi, \ t}\,|^2\,+\,\Big(\,|\,\xi\,|^2\frac{\eta'}{2}&\,+\,2\zeta\,\Big)\int_0^t |\,\hat{\omega}\pare{\xi, \ t'}\,|^2\dd t'
						\,\leq\\\,&\leq\,
						\frac{\rho_0 k}{2}  |\,\hat{\omega}_0(\xi)\,|^2
						\,+\,
						C\,+\,
						2\zeta|\,\xi\,|\int_0^t|\,\hat{\omega}\pare{\xi, \ t'}\,||\,\uu\pare{\xi, \ t'}\,|\dd t'.
			\end{aligned}
			\end{equation}
			We now take the sum between \eqref{ineq-hatu2b} and \eqref{ineq-anghat2}
			\begin{equation}\label{ineq-hatw-hatu}
			\begin{aligned}
					\frac{\rho_0 }{2}
					\Big(\,  
					k|\,\hat{\omega}\pare{\xi, \ t}\,|^2\,+\,
					|\,\hat \uu\pare{\xi, \ t}\,|^2
					\Big)\,+\,					
					\,|\,\xi\,|^2\int_0^t
					\Big(\,
					&\frac{\eta}{2}|\,\hat \uu\pare{\xi, \ t'}\,|^2\,+\,
					\frac{\eta'}{2}|\,\hat{\omega}\pare{\xi, \ t'}\,|^2
					\,\Big)
					\,\dd\tau
					\,\leq\\&\leq
					\frac{\rho_0 }{2}
					\Big(
					|\,\hat \uu_0(\xi)\,|^2\,+\,
					k|\,\hat{\omega}_0(\xi)\,|^2\,					
					\Big)
					\,+\,
					C,
			\end{aligned}
			\end{equation}
			where we have also used the positive sign of the non-linear term depending on $\zeta$:
			\begin{equation*}
				2\zeta	\int_0^t\left( |\,\hat{\omega}\pare{\xi, \ t'}\,\,|^2\,+\,|\xi|^2|\,\uu\pare{\xi, \ t'}\,|^2\,-\,2|\xi||\,\hat{\omega}\pare{\xi, \ t'}\,\,||\,\uu\pare{\xi, \ t'}\,|\right)\dd t'\,\geq\,0,
			\end{equation*}
			To conclude, we recast the equation of the magnetization $M$ in Fourier space
			\begin{equation*}
				\partial_t \hat{M} \,+\,\FF_x(\uu\cdot \nabla M) + \sigma |\xi|^2 \hat{M}\,=\,
				\FF_x\left(\left(\,\begin{matrix}\hspace{0.25cm}M_2\\-M_1	\end{matrix}\,\right)\omega\right)- \frac{1}{t'}(\,\hat{M}-\chi_0\hat{H}\,),
			\end{equation*}
			we then multiply by $\hat M$ and integrate in time $(0,t)$ to obtain
			\begin{equation}\label{ineq-Mxi}
			\begin{aligned}
				\frac{1}{2}|\,\hat{M}\pare{\xi, \ t}\,|^2\,&+\,\Big(\,\frac{\sigma}{2}|\,\xi\,|^2\,+\,\frac{1}{\tau}\,\Big)\int_0^t|\,\hat{M}\pare{\xi, \ t'}\,|^2\,\dd t'
				\,\lesssim\,
				\frac{1}{2}|\,\hat{M}_0(\xi)\,|^2\,+\\&+\,
				\frac{2}{\sigma}\int_0^t|\,\FF_x(\uu\cdot \nabla M)\,|^2\pare{\xi, \ t'}\,\dd t'\,+
				\frac{2}{\sigma}\int_0^t\,
				\,\left|\,\FF_x\left(\,\begin{matrix}\hspace{0.25cm}M_2\\-M_1	\end{matrix}\,\right)\omega\pare{\xi, \ t'}\right|^2\,\dd t'
				\,+\\&\hspace{6.5cm}+\,
				\frac{\chi_0}{\tau}
				\int_0^t
				|\,\hat{H}\pare{\xi, \ t'}\,||\,\hat{M}\pare{\xi, \ t}\,|\dd t'.
			\end{aligned}
			\end{equation}
			Now, we pass to estimate the last terms in the previous inequality, first by 
			\begin{equation*}
			\begin{aligned}
				\frac{2}{\sigma}\int_0^t|\,\FF_x(\uu\cdot \nabla M)\,|^2\pare{\xi, \ t'}\,\dd t'
				\,&\leq\,
				\int_0^t\|\,\uu\pare{ t'}\cdot \nabla M\pare{ t'}\|_{L^1(\RR^2)}^2\dd t' \\
				&\leq\,
				\|\,\uu\pare{ t'}\,\|_{L^\infty_t L^2}^2
				\int_0^t\|\nabla M\pare{ t'}\,\|_{L^2}^2
				\,\leq\,C,
			\end{aligned}
			\end{equation*}
			and then by
			\begin{equation*}
			\begin{aligned}		
			\\
				\frac{2}{\sigma}\int_0^t\,
				\,\FF_x\left(\left(\,\begin{matrix}\hspace{0.25cm}M_2\\-M_1	\end{matrix}\,\right)\omega\right)\pare{\xi, \ t'}\,\dd t'
				&\lesssim
				\,
				\int_0^t\|\,
				\left(\,	
					\begin{matrix}	
						\hspace{0.25cm}M_2\\-M_1	
					\end{matrix}\,
					\omega								
				\right)\|_{L^1(\RR^2)}^2\dd t' \\
				&\lesssim
				\| M\pare{ t'}\,\|_{L^\infty_tL^2}^2	
				\int_0^t\| \omega\pare{ t'}\,\|_{L^2}^2
				\lesssim
				C.
			\end{aligned}
			\end{equation*}
			Now, we resort  Lemma \ref{lemma-ineq-Hxi} in order to bound the last term which depends on the effective magnetization field $H$:
			\begin{equation*}
				\frac{\chi_0}{\tau}\int_0^t
				|\,\hat{H}\pare{\xi, \ t'}\,||\,\hat{M}\pare{\xi, \ t}\,|\dd t'
				\,\leq\,
				\frac{(1+\nu)\chi_0}{\tau}
				\int_0^t
				|\,\hat{M}\pare{\xi, \ t'}\,|^2\dd t'\,+\,
				\frac{1}{\nu\tau}\int_0^t
				|\,\xi\,|^{-2}|\,F\pare{\xi, \ t'}\,|^2\dd t'.				
			\end{equation*}
			Putting these relations together with \eqref{ineq-Mxi}, we deduce the final estimate on $M$
			\begin{equation}\label{ineq-hatM}
			\begin{aligned}
				\frac{1}{2}|\,\hat{M}\pare{\xi, \ t}\,|^2\,+\,\Big(\,\frac{\sigma}{2}|\,\xi\,|^2\,&+\,\frac{1-(1+\nu)\chi_0}{\tau}\,\Big)\int_0^t|\,M\pare{\xi, \ t'}\,|^2\,\dd t'
				\,\leq\\&\leq\,
				\frac{1}{2}|\,\hat{M}_0(\xi)\,|^2\,+\,
				C
				\,+\,
				\frac{1}{\nu\tau}\int_0^t
				|\,\xi\,|^{-2}|\,F\pare{\xi, \ t'}\,|^2\dd t'.
			\end{aligned}
			\end{equation}
			The proof of the proposition is then accomplished combining inequalities \eqref{ineq-hatu2b}, \eqref{ineq-anghat2} and \eqref{ineq-hatM}. 
	\end{proof}
	\noindent We are now in the position of proving Theorem \ref{thm:long-time-dyn2}.
	\begin{proof}[Proof of Theorem \ref{thm:long-time-dyn2}] 
		We begin with recalling the a-priori energy bound of system \eqref{main_system}:
		\begin{equation*}
			\frac{1}{2}	\frac{\dd}{\dd t}\EE(t) + \tilde{c} \DD(t)
			\leq 
			C 
			\Big(\,
					\|\,F(t)\,\|_{L^2(\RR^2)}^2\,+\,
					\|\,F(t)\,\|_{\Hh^{-1}(\RR^2)}^2+\,
					\|\,\partial_tF(t)\,\|_{\Hh^{-1}(\RR^2)}^2
			\Big),
		\end{equation*}				
		for a suitable positive constant $\tilde c>0$, 
		where the energy $\EE(t)$ and the dissipation $\DD(t)$ are determined by
		\begin{equation*}
		\begin{aligned}
			\EE(t)\,&=\,\rho_0\,\|\,\uu\,\|_{L^2_x}\,+\,\rho_0k\|\,\omega\,\|_{L^2_x}^2\,+\,\mu_0\,\|\,H(t)\,\|_{L^2_x}^2\,+\,\|\,M\,\|_{L^2_x}^2,\\
			\DD(t)\,&=\,\,\|\,\nabla \uu\,\|_{L^2_x}\,+\,\|\,\nabla \omega\,\|_{L^2_x}^2\,+\,\|\,\nabla M\,\|_{L^2_x}^2\,+\,\|\,\Div\, M\,\|_{L^2_x}^2\,+\,\|\, M\,\|_{L^2_x}^2+\,\|\,H\,\|_{L^2}^2.
		\end{aligned}
		\end{equation*}
		Making use of the Fourier-Plancherel theorem, we recast the $L^2$-energy estimate  for system \eqref{main_system} in Fourier variables:
		\begin{equation*}
			\begin{aligned}
				\frac{\dd}{\dd t}
				\int_{\RR^2}
				\Big[\,
					\underbrace{
					\frac{\rho_0}{2}\,|\,\hat{\uu}\pare{\xi, \ t}\,|^2\,+\,
					\frac{\rho_0k}{2}\,|\,\hat{\omega}\pare{\xi, \ t}\,|^2\,+\,	
					\frac{1}{2}\,|\,\hat{M}\pare{\xi, \ t}\,|^2
					}_{r\pare{\xi, \ t}}
					\,
					+\,
					|\,\hat{H}\pare{\xi, \ t}\,|^2
				\Big]
				\dd \xi\,+
				\\
				\,+\,
				\tilde c \int_{\RR^2}
				|\xi|^2
				\underbrace{
				\Big[
					\,|\,\hat{\uu}\pare{\xi, \ t}\,|^2
					\,+\,
					\,|\,\hat{\omega}\pare{\xi, \ t}\,|^2
					\,+\,
					|\,M\pare{\xi, \ t}\,|^2
				\Big]
				}_{\hat{R}\pare{\xi, \ t}}
				\dd\xi
				\,\leq\\\leq\,
				\int_{\RR^2}
				\Big(\,
					|\,\hat F\pare{\xi, \ t}\,|^2\,+\,
					|\,\xi\,|^{-2}
					|\,\hat F\pare{\xi, \ t}\,|^2
					\,+\,
					|\,\xi\,|^2
					|\,\partial_t\hat F\pare{\xi, \ t}\,|^2
				\Big)
				\dd\xi
				\leq
				\frac{K}{(1+t)^{1+\eta}}.
			\end{aligned}
		\end{equation*}
				
		\noindent
		Next, for $t\geq 0$ we define the ball $B_t(0)\subseteq \RR^2$ as the ball centered in $0$ and of radius $\nu(t)$, for a function $\nu(t)$ to be determined later. Then we can write
		\begin{equation*}
			\begin{aligned}
				&\int_{\RR^2}
				|\xi|^2
				R\pare{\xi, \ t}
				\dd\xi
				\geq\,
				\int_{\RR^2\setminus B_t(0)}
				\av{\xi}^2
				R\pare{\xi, \ t}
				\dd\xi
				\geq\,
				\nu(t)^2
				\left(
				\int_{\RR^2}
				R\pare{\xi, \ t}
				\dd\xi
				-
				\int_{B_t(0)}
				|\xi|^2
				R\pare{\xi, \ t}
				\dd\xi
				\right);
			\end{aligned}
		\end{equation*}
		We summarize the previous inequalities in the following estimate
		\begin{equation}\label{ineq111}
				\frac{\dd}{\dd t}
				\int_{\RR^2}\,r\pare{\xi, \ t}\,\dd\xi
				\,+\,
				\nu(t)^2\int_{\RR^2}\,r\pare{\xi, \ t}\,\dd\xi
				\,\lesssim\,
				\frac{1}{(1+t)^{1+\eta}}
				\,+\,
				\nu(t)^2\int_{B_t(0)}\,r\pare{\xi, \ t}\,\dd\xi.
		\end{equation}
		where we have also used the relation $r\pare{\xi, \ t}\,\lesssim R\pare{\xi, \ t}\,\lesssim r\pare{\xi, \ t}$.
		At this point, we notice that, after setting $\rm{V}(t)\,:=\,\int_0^t \nu\pare{ t'}^2\dd t'$, we can write
		\begin{equation*}
			\e^{-\rm{V}(t)}
			\frac{\dd}{\dd t}
			\left[	
				\e^{\rm{V}(t)}
				\int_{\RR^2}r\pare{\xi, \ t} \dd\xi
			\right]
			\,=\,
			\frac{\dd}{\dd t}
				\int_{\RR^2}\,r\pare{\xi, \ t}\,\dd\xi
				\,+\,
				\nu(t)^2\int_{\RR^2}\,r\pare{\xi, \ t}\,\dd\xi
		\end{equation*}
		Putting this relation in \eqref{ineq111} we gather
		\begin{equation}\label{ineq-112}
			\e^{-\rm{V}(t)}
			\frac{\dd}{\dd t}
			\left[	
				\e^{\rm{V}(t)}
				\int_{\RR^2}r\pare{\xi, \ t} \dd\xi
			\right]
			\,+\,
			\int_{\RR^2}
			|\,H\pare{\xi, \ t}\,|^2
			\dd\xi +
			\,\lesssim\,
			\frac{1}{(1+t)^{1+\eta}}
			\,+\,
				\nu(t)^2\int_{B_t(0)}\,r\pare{\xi, \ t}\,\dd\xi.
		\end{equation}
		Now, we pass to estimate the last term in the previous inequality: Proposition \ref{prop-pointwise-Fourier-estimate} implies
		\begin{equation*}
			\begin{aligned}
				\nu(t)^2\int_{B_t(0)}\,r\pare{\xi, \ t}\,\dd\xi
				\,=\,
				\nu(t)^2\int_{B_t(0)}\,
				\left[
				\frac{\rho_0}{2}\,|\,\hat{\uu}\pare{\xi, \ t}\,|^2\,+\,
					\frac{\rho_0k}{2}\,|\,\hat{\omega}\pare{\xi, \ t}\,|^2\,+\,	
					\frac{1}{2}\,|\,\hat{M}\pare{\xi, \ t}\,|^2
				\right]\,\dd\xi\\
				\lesssim 
				\nu(t)^2
				\Big(
						1\,+\,\|\,(\,\uu_0,\,\omega_0,\,M_0,\,H_0)\,\|_{L^2_x}^2
				\,+\,
				\int_{B_t(0)}
				\left(
				\int_0^t
					|\,\xi\,|^{-2}
					|\hat{F}\pare{\xi, \ t'}|^{2}
					\dd t'
				\right)
				\dd\xi
				\Big)
				\lesssim
				\nu(t)^2.
			\end{aligned}
		\end{equation*}
		Hence we can insert this bound into \eqref{ineq-112} and integrate the resulting expression in time:
		\begin{equation*}
			\e^{\rm{V}(t)}
				\int_{\RR^2}r\pare{\xi, \ t} \dd\xi\,
			\,+\,
			\int_0^t
			\int_{\RR^2}
			\e^{\rm{V}\pare{ t'}}
			|\,\hat{H}\pare{\xi, \ t}\,|^2
			\dd\xi
			\,\lesssim\,
			\int_0^t \e^{\rm{V}\pare{ t'}}
			\frac{1}{(1+t')^{1+\eta}}\dd t'
			\,+\,
			\int_0^t \e^{\rm{V}\pare{ t'}}
			\nu(t)^2\dd t'.
		\end{equation*}
		To conclude the proof we choose the function $\nu^2(t)\,=\,\alpha/(1+t)$ for a positive constant $\alpha<\eta$.
		After observing that
		\begin{equation*}
		\int_0^t 
			\frac{
			\e^{\rm{V}\pare{ t'}}}
			{(1+t')^{1+\eta}}
			\dd t'
			\,=\,
			1-\frac{1}{(1+t)^{\eta-\alpha}}
					\quad\quad\text{and}\quad\quad
			\int_0^t \e^{\rm{V}\pare{ t'}}
			\nu(t)^2\dd t'\,=\,1- \frac{\alpha^2}{(1+t)^{1-\alpha}}
		\end{equation*}
		we finally discover that 
		\begin{equation*}
			\int_{\RR^2}
			\left[
			\frac{\rho_0}{2}\,|\,\hat{\uu}\pare{\xi, \ t}\,|^2+
					\frac{\rho_0k}{2}\,|\,\hat{\omega}\pare{\xi, \ t}\,|^2+	
					\frac{1}{2}\,|\,\hat{M}\pare{\xi, \ t}\,|^2
			\right]
			\dd\xi
			=
			\int_{\RR^2}r\pare{\xi, \ t} \dd\xi
			\lesssim
			\ee^{-{\rm V}(t)}\,=\,
			\frac{1}{(1+t)^\alpha}.
		\end{equation*}
		The proof of the Proposition is then accomplished with the further remark that
		\begin{equation*}
			\int_{\RR^2}|\,\hat{H}\pare{\xi, \ t}\,|^2\dd\xi
			\,\leq\,
			\int_{\RR^2}|\,M\pare{\xi, \ t}\,|^2\dd \xi
			\,+\,
			\|\,F(t)\,\|_{\Hh^{-1}(\RR^2)}
			\,\lesssim\,\frac{1}{(1+t)^\alpha}.
		\end{equation*}	
	\end{proof}

\section{Propagation of Sobolev regularities}\label{sec:higher_regularity}

\noindent
In the previous sections we have established the theory of weak solutions for system \eqref{main_system}, under minimum smoothness assumption on the initial data and the the external magnetic force $F$. The purpose of this section is to investigate solutions with higher regularities. Our goal is to prove propagation of the initial smoothness, i.e. to prove Theorem \ref{thm-prop-reg}.\\

\noindent For the sake of a unified presentation we recall here the main statement;

\begin{theorem}\label{thm-prop-reg2}
	Let us assume the initial data ${\rm U}_0\,:=\,(\uu_0,\,\omega_0,\,M_0,\,H_0)$ is in the non-homogeneous space $H^s(\RR^2)$, for a positive real $s>0$. Assuming the source term $F$ in 
	$
	L^2_{loc}(\RR_+, H^s(\RR^2))\cap W^{1,2}_{loc}(\RR_+,H^{s-1}(\RR^2)).
	$ 
	Let $(\uu,\,\omega,\,M,\,H)$ be the unique solution of system \eqref{main_system} given by Theorem \ref{thm:uniqueness}. Then
	\begin{equation*}
		(\uu,\,\omega,\,M,\,H)\,\in\,L^\infty_{\loc}(\RR_+,H^s(\RR^2))\cap L^2_{\loc}(\RR_+, H^{s+1}(\RR^2))
	\end{equation*}
	and the following dissipation formula holds true:
	\begin{equation*}
		\frac{1}{2}\EE_s(t)\,+\,\int_0^t\DD_s\pare{ t'}\dd t'\,\leq\, \Psi_s(\,{\rm U}_0,\,F,\,t),
	\end{equation*}
	where
	\begin{equation*}
		\Psi_s(\,{\rm U}_0,\, F,\,t)
		\,=\,
		\bigg(
		\frac{1}{2}\EE_s(0)
		\,+\,
		C
		\int_0^t
		\Big[
			\|\,				F\pare{ t'}\,\|_{\Hh^s}^2\,+\,
			\|\,				F\pare{ t'}\,\|_{\Hh^{s-1}}^2\,+\,
			\|\,	\partial_t	F\pare{ t'}\,\|_{\Hh^{s-1}}^2\,
		\Big]
		\dd t'
		\bigg),
	\end{equation*}
	with $C$, a suitable positive constant which depends on.
\end{theorem}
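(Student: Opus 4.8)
The plan is to prove the dissipation inequality as an \emph{a priori} estimate only; the existence of a solution at this regularity and its coincidence with the weak solution furnished by Theorem~\ref{thm:uniqueness} follow by running the Galerkin scheme of Section~\ref{sec:existence} while propagating the $H^s$ bound along the approximation, and then invoking the uniqueness proved in Section~\ref{sec-uniq}, so I would focus on the energy estimate. First I would eliminate $H$ through $H=-\cQ M+\cG_F$, with $\cQ=\Delta^{-1}\nabla\Div$ a Fourier multiplier of order zero and $\cG_F=\nabla\Delta^{-1}F$ satisfying $\norm{\cG_F}_{\Hh^{s+1}}\lesssim\norm{F}_{\Hh^s}$, $\norm{\cG_F}_{\Hh^{s}}\lesssim\norm{F}_{\Hh^{s-1}}$ and $\norm{\partial_t\cG_F}_{\Hh^{s}}\lesssim\norm{\partial_tF}_{\Hh^{s-1}}$, which turns \eqref{main_system} into a closed parabolic system for $(\uu,\omega,M)$. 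Then I would apply the dyadic blocks $\Delta_q$ (Appendix~\ref{app:LP}) to the momentum, angular-momentum and magnetization equations, pair the three localized equations in $L^2$ with $\rho_0\Delta_q\uu$, $\rho_0k\Delta_q\omega$ and $\Delta_qM$, add, multiply by $2^{2qs}$ and sum over $q\geq-1$. Using $\norm{f}_{\Hh^s}^2\sim\sum_q 2^{2qs}\norm{\Delta_qf}_{L^2}^2$ the left side reproduces $\tfrac12\tfrac{\dd}{\dd t}\EE_s$ (in fact I would carry the augmented functional $\EE_s+\mu_0\norm{H}_{\Hh^s}^2$, mirroring the $L^2$ energy of \cite{Stefano2}), while the parabolic terms yield the full $\DD_s$ after using $\langle\cQ\Delta_qM,\Delta_qM\rangle\geq0$ and $\norm{\Div M}_{\Hh^s}\leq\norm{\nabla M}_{\Hh^s}$; everything else goes to the right-hand side.

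Most of the right-hand side is routine. The ``Coriolis'' couplings $2\zeta\nabla^\perp\omega$ and $2\zeta\curl\uu$ together with $-4\zeta\omega$ reproduce block by block the sign-indefinite quantity $c\langle\Delta_q\omega,\Delta_q\curl\uu\rangle$, absorbed into $\ee\DD_s+C\EE_s$. The term $\omega\,M^\perp$ paired with $\Delta_qM$, which vanishes identically at the $L^2$ level because $M^\perp\!\cdot M=0$, together with $M\times H$ and $-\tfrac1\tau(M-\chi_0H)$, is treated by Bony's decomposition and the Littlewood-Paley product laws (for $0<s<1$ one uses $\Hh^s(\RR^2)\hookrightarrow L^{2/(1-s)}$ and $H^1(\RR^2)\hookrightarrow L^p$ for all $p<\infty$; for $s\geq1$ the usual tame estimates), each producing a bound $\ee\DD_s+\bigl(1+\norm{(\uu,\omega,M)}_{L^2}^2+\norm{\nabla(\uu,\omega,M)}_{L^2}^2\bigr)\EE_s$, the relaxation and magnetostatic terms moreover contributing, once combined with the identity $\tfrac{\dd}{\dd t}\norm{H}_{\Hh^s}^2=-\langle\partial_tM,H\rangle_{\Hh^s}+\langle\partial_t\cG_F,H\rangle_{\Hh^s}$, a non-positive quadratic form under the constraint $\chi_0<1$, exactly as in the $L^2$ estimate. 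All contributions containing $\cG_F$ — inside the Lorentz force, inside $M\times H$, inside the relaxation — are estimated by the same product laws, producing exactly the forcing $C(\norm{F}_{\Hh^s}^2+\norm{F}_{\Hh^{s-1}}^2)$ of $\Psi_s$ plus lower-order terms absorbed in $g(t)\EE_s$; the term $\norm{\partial_tF}_{\Hh^{s-1}}^2$ enters only through $\langle\partial_t\cG_F,H\rangle_{\Hh^s}\leq\ee\DD_s+C\norm{\partial_tF}_{\Hh^{s-1}}^2+C\norm{F}_{\Hh^{s-1}}^2+C\EE_s$, after which one drops $\norm{H}_{\Hh^s}^2$ by $\norm{H}_{\Hh^s}\lesssim\norm{M}_{\Hh^s}+\norm{F}_{\Hh^{s-1}}$ — which is precisely why $H$ is absent from the final $\EE_s$.

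The core of the argument is the convective terms $\uu\cdot\nabla\uu$, $\uu\cdot\nabla\omega$, $\uu\cdot\nabla M$ and the Lorentz force $\mu_0M\cdot\nabla H$. For the convective terms I would write $\Delta_q(\uu\cdot\nabla f)=\uu\cdot\nabla\Delta_qf+[\Delta_q,\uu\cdot\nabla]f$: the first piece gives nothing against $\Delta_qf$ since $\Div\uu=0$, and the commutator is treated by the classical Littlewood-Paley commutator estimate in a form involving $\norm{\nabla\uu}$ in a Lebesgue space rather than in $L^\infty$, which after summation in $q$ is closed in two dimensions by the interpolation $\norm{\nabla\uu}_{L^{2/(1-s)}}\lesssim\norm{\uu}_{\Hh^{s+1}}$ (and its $s\geq1$ analogue), giving $\ee\DD_s+C\norm{\uu}_{H^1}^2\EE_s$ with $\norm{\uu}_{H^1}^2\lesssim1+\norm{\nabla\uu}_{L^2}^2\in L^1_{\loc}(\RR_+)$ by \eqref{eq:L2_energy_bound}. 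For the Lorentz force, the key observation is that $\curl H=0$ gives $M\cdot\nabla H=\nabla(M\cdot H)-(\nabla M)^{\mathrm t}H$ modulo a gradient annihilated by the Leray projector (or when tested against the divergence-free $\uu$), so that $M\cdot\nabla H$ is in fact a \emph{first-order} quadratic expression — just as $\cQ$ having order zero shows for $M\cdot\nabla\cQ M$ — of the same scaling as $\uu\cdot\nabla\uu$, and not the apparently second-order object with $H\in\Hh^{s+1}$. The ``extraneous'' top derivative is then removed by the same commutator-and-divergence mechanism, while a cancellation with the corresponding magnetic coupling in the $M$-equation occurs precisely when the three energies are weighted by the physical constants $\rho_0,\rho_0k,1,\mu_0$, the remainder being genuinely lower order. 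I expect this cancellation-and-commutator bookkeeping for the magnetic coupling to be the main obstacle: it is where the paradifferential calculus is really needed and where one must be careful never to demand of $M$ more than the dissipation regularity $\Hh^{s+1}$.

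Collecting the estimates yields $\tfrac{\dd}{\dd t}\EE_s(t)+\DD_s(t)\leq g(t)\EE_s(t)+h(t)$ with $h(t)=C\bigl(\norm{F(t)}_{\Hh^s}^2+\norm{F(t)}_{\Hh^{s-1}}^2+\norm{\partial_tF(t)}_{\Hh^{s-1}}^2\bigr)$ and $g(t)=C\bigl(1+\norm{F(t)}_{\Hh^s}^2+\norm{(\uu,\omega,M,H)(t)}_{L^2}^2+\norm{\nabla(\uu,\omega,M)(t)}_{L^2}^2\bigr)$, both in $L^1_{\loc}(\RR_+)$ by Theorem~\ref{thm:uniqueness}. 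The decisive point is that $g$ depends only on the \emph{basic} $L^2$ energy and not on $\EE_s$, so Gronwall closes with a coefficient $\int_0^tg$ that is a single fixed function of $t$ and the data; this gives $\EE_s(t)\leq\bigl(\EE_s(0)+\int_0^th\bigr)\exp\bigl(\int_0^tg\bigr)$, a uniform bound with one exponential for every $s>0$, in contrast to the iterated-exponential bounds of \cite{Stefano2}, and integrating the differential inequality once more gives $\tfrac12\EE_s(t)+\int_0^t\DD_s\leq\Psi_s({\rm U}_0,F,t)$; in particular $(\uu,\omega,M,H)\in L^\infty_{\loc}(\RR_+;H^s(\RR^2))\cap L^2_{\loc}(\RR_+;H^{s+1}(\RR^2))$.
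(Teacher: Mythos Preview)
Your proposal is correct and follows essentially the same architecture as the paper: apply dyadic blocks to each equation, use the Bony decomposition and commutator estimates for the transport terms (the paper's Lemma~\ref{lemma:hig_reg} plays exactly the role of your commutator step), and close via Gronwall with a coefficient depending only on the $L^2$ energy, which is the whole point of the single-exponential improvement over \cite{Stefano2}.

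The one genuine methodological difference concerns the Lorentz force $\mu_0 M\cdot\nabla H$. You exploit the structural identity $M\cdot\nabla H=\nabla(M\cdot H)-(\nabla M)^{\mathrm t}H$ coming from $\curl H=0$, which reduces the term to a zeroth-order bilinear expression (modulo a gradient killed by $\Div\,\uu=0$) and then allude to a cancellation with the magnetic coupling when the $\mu_0\|H\|_{\Hh^s}^2$ energy is carried along. The paper does \emph{not} use this identity nor any cancellation in the Lorentz force; it bounds $\langle M\cdot\nabla H,\uu\rangle_{\Hh^s}$ directly by a localized product estimate in the $L^{4/3}$--$L^4$ duality (their Lemma~\ref{lem:tec_est_for_tec_est} gives $\|\Dd_j(M\cdot\nabla H)\|_{L^{4/3}}\lesssim c_j 2^{-js}(\cdots)$, then $\|\Dd_j\uu\|_{L^4}\lesssim\|\Dd_j\uu\|_{L^2}^{1/2}\|\Dd_j\nabla\uu\|_{L^2}^{1/2}$), and the $H$ contribution to the energy enters only through the relaxation term $\tfrac{\chi_0}{\tau}\langle H,M\rangle_{\Hh^s}$ in the $M$-equation. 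Your route is arguably more structural and makes transparent why no derivative is lost; the paper's route is more mechanical but avoids having to track the extra $\|H\|_{\Hh^s}^2$ energy and the associated $\partial_t\cG_F$ bookkeeping. Both land on the same Gronwall inequality with $g\in L^1_{\loc}$ depending only on the basic energy. Your remark about a ``cancellation with the corresponding magnetic coupling in the $M$-equation'' is the only point that is not quite borne out: no such cancellation is needed (or used in the paper) once the Lorentz term has been reduced to first order, since $(\nabla M)^{\mathrm t}H$ is already at the same level as $M\times H$ and is handled by the same product estimate.
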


\noindent
\textit{Proof of Theorem \ref{thm-prop-reg2}:} 
For the sake of unified presentation, it is convenient to first introduce some terminology. We recall that the definition of the homogeneous paraproduct operator $\Th_a b$ and the homogeneous reminder $\dot R(a,\,b)$ correspond to
\begin{equation*}
	\Th_a b \,=\,\sum_{j\in\ZZ}\Sd_{j-1}a \,\Dd_j b\quad\quad\text{and}\quad \quad \Rd(a,\,b)\,=\,\sum_{j\in\ZZ,\;|\nu|\leq 1}\Dd_{j}a\, \Dd_{j+\nu}b,
	\quad\text{where}\quad \Sd_{j}a\,=\,\sum_{q\leq j-1}\Dd_q a.
\end{equation*} 
respectively. Introducing the notation $T'_ab\,=\,\Th_ab\,+\,\dot R(a,b)\,=\,\sum_{j\in\ZZ}\Dd_j a\,\Sd_{j+1}b$, we recall that the identity $a\,b\,=\, \Th_a b\,+\,\Th'_b a$ holds true for all homogeneous tempered distributions $a$ and $b$ for which the product is well-defined. By an abuse of notation we also introduce the terminology
\begin{equation*}
	\Th_A\cdot \nabla B\,=\,\sum_{j\in\ZZ}\Sd_{j-1}A\cdot \nabla \Dd_jB\quad\quad\text{and}\quad\quad
	\Th_{\nabla B}'A\,=\,\sum_{j\in\ZZ}\Dd_{j}A\cdot \nabla \Sd_{j+1}B,
\end{equation*}
for any vector field $A$ and $B$ returning value in $\RR^2$.

\noindent 
To begin with, we make a paralinearization of system \eqref{main_system}. We first fix an index $j\in \ZZ$ and next we apply the homogeneous dyadic block $\Dd_j$ to the $\uu$-equation of system \eqref{main_system}. We hence remark that the function $\uu_j:=\,\Dd_j\uu$ is a classical solution of the following PDE:
\begin{equation*}
\begin{aligned}
	\vspace{3pt}
	\;\rho_ 0\big(\partial_t \uu_j\,+\,\Sd_{j-1}\uu\cdot\nabla \uu_j\,
	\,\big)\,-\,(\eta +\zeta)\,\Delta \uu_j\,+\,\nabla \pre_j\,= 
	\,-\rho_0
	(\Th_{\uu}\cdot\nabla \uu_j\,-\,\Sd_{j-1}\uu\cdot \nabla \uu_j\,)\,+\\
	-\rho_0\Dd_j(\,\Th'_{\nabla \uu} \uu\,)\,+
	\mu_0\Dd_j(\,M\cdot \nabla H\,)\,+\,
	2\zeta\, \left(\,\begin{matrix}\hspace{0.25cm}\partial_2  \omega_j\\-\partial_1 \omega_j	\end{matrix}\,\right).
\end{aligned}
\end{equation*}
Next, we perform an energy estimate multiplying both sides by $\uu_j$ and integrating over $\RR^2$. We then achieve
\begin{equation}\label{reg_prop_equj1}
	\begin{aligned}
	\frac{\rho_0}{2}\frac{\dd}{\dd t}
	\|\,	\uu_j	\,\|_{L^2}^2
	\,+\,
	(\eta\,+\,\zeta\,)
	\|\,\nabla \uu_j\,\|_{L^2}^2
	\,=\,
	\rho_0 
	\underbrace{
	\int_{\RR^2}(\,	\Th_\uu\cdot\nabla \uu_j-\dot S_{j-1} \uu\cdot\nabla \uu_j\,)\cdot \uu_j
	}_{\I_1^j(\uu(t),\,\uu(t))}
	\,+\\
	\,+\,
	\rho_0
	\underbrace{
	\int_{\RR^2}\Dd_j(\,\Th'_{\nabla \uu} \uu\,)\,\cdot \uu_j}_{\I_2^j(\uu(t),\,\uu(t))}
	\,+
	\mu_0\int_{\RR^2}\Dd_j(\,\Th_{M}\cdot \nabla H\,)\,\cdot \uu_j
	-
	2\zeta
	\int_{\RR^2}
	\omega_j\curl \uu_j,
	\end{aligned}
\end{equation}
where the last identity holds true also thanks to the free divergence condition on $\uu$, leading to
\begin{equation*}
	\int_{\RR^2}(\Sd_{j-1} \uu\cdot\nabla \uu_j)\cdot \uu_j\,=\,0.
\end{equation*}
We also introduce the operators
\begin{equation*}
\begin{aligned}
	\I_1^j({\rm v},\,{\rm w})\,&:=\,\int_{\RR^2}(\,	\Th_{\rm v}\cdot\nabla {\rm w}_j-\dot S_{j-1} {\rm v}\cdot\nabla {\rm w}_j\,)\cdot {\rm w}_j,\\
	\I_2^j({\rm v},\,{\rm w})\,&:=\,\int_{\RR^2}\Dd_j(\,\Th'_{\nabla {\rm w}} {\rm v}\,)\,\cdot {\rm w}_j,
\end{aligned}
\end{equation*}
for two suitable vector field ${\rm{ v}}(x)\in\RR^2$ and ${\rm w(x)}\in \RR^2$. Furthermore, the above definitions easily extend to the case of a real value function ${\rm w(x)}\in \RR$.

\noindent
The nonlinear terms $\I_1^j(\uu(t),\,\uu(t))$ and $\I_2^j(\uu(t),\,\uu(t))$ of \eqref{reg_prop_equj1} are then bounded making use of the following lemma, :
\begin{lemma}\label{lemma:hig_reg}
Let ${\rm v}(x)\in \RR^2$ be a vector field in $\Hh^s(\RR^2)\cap \Hh^1(\RR^2)$ and let ${\rm w}(x)$ be a vector field (or a function) in $\Hh^s(\RR^2)\cap \Hh^{s+1}(\RR^2)$ 
There exists a suitable constant $C$ which does not depend on ${\rm v}$ and ${\rm w}$, such that
	\begin{equation*}
			\sum_{j\in\ZZ}2^{2js}
			\Big(\,
				\I_1^j({\rm v},\,{\rm w})\,+\,
				\I_2^j({\rm v},\,{\rm w})
			\Big)
				\,	\leq\, \frac{C}{\ee}
				\Big(\,
					\|\, \nabla {\rm v}\,\|_{L^2}^2\|\,{\rm w}\,\|_{\dot H^s}^2\,+\,
					\|\, \nabla {\rm w}\,\|_{L^2}^2\|\,{\rm v}\,\|_{\dot H^s}^2
				\Big)
				\,+\,
			\ee\|\,\nabla {\rm w}\,\|_{\Hh^s}^2,
	\end{equation*}
	for any small positive parameter $\ee$.
\end{lemma}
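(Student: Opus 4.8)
The plan is to first establish the sharper bound
\begin{equation*}
	\sum_{j\in\ZZ}2^{2js}\Big(\,\I_1^j({\rm v},{\rm w})+\I_2^j({\rm v},{\rm w})\,\Big)\,\lesssim\,\Big(\,\|\,\nabla{\rm v}\,\|_{L^2}\|\,{\rm w}\,\|_{\dot H^s}+\|\,\nabla{\rm w}\,\|_{L^2}\|\,{\rm v}\,\|_{\dot H^s}\,\Big)\,\|\,\nabla{\rm w}\,\|_{\dot H^s},
\end{equation*}
after which the Lemma is immediate from Young's inequality $ab\le\frac{C}{\ee}a^2+\ee b^2$, together with $\|\,\nabla{\rm w}\,\|_{\dot H^s}=\|\,{\rm w}\,\|_{\dot H^{s+1}}$. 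Throughout, $(c_j)_{j\in\ZZ}$ denotes a generic nonnegative sequence, possibly changing from line to line, with $\sum_j c_j^2\le1$; we shall freely use the almost-orthogonality of the blocks $\Dd_j$ and the two-dimensional Bernstein inequalities $\|\,\Dd_k f\,\|_{L^\infty}\lesssim 2^k\|\,\Dd_k f\,\|_{L^2}$, $\|\,\nabla\Dd_k f\,\|_{L^\infty}\lesssim 2^{2k}\|\,\Dd_k f\,\|_{L^2}$.

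\emph{The commutator term $\I_1^j$.} Since the spectrum of $\Sd_{k-1}{\rm v}\cdot\nabla\Dd_k{\rm w}$ lies in an annulus of size $2^k$, the quantity $\Dd_j(\Th_{\rm v}\cdot\nabla{\rm w})-\Sd_{j-1}{\rm v}\cdot\nabla{\rm w}_j$ reduces to a finite sum over $|k-j|\le4$ of the genuine commutators $[\Dd_j,\Sd_{k-1}{\rm v}\cdot\nabla]\Dd_k{\rm w}$, plus correction terms of the type $(\Sd_{k-1}-\Sd_{j-1}){\rm v}\cdot\nabla\Dd_j\Dd_k{\rm w}$, which only involve the blocks $\Dd_{j-1}{\rm v}$ and $\Dd_{j-2}{\rm v}$. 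Applying the classical estimate $\|\,[\Dd_j,a\cdot\nabla]b\,\|_{L^2}\lesssim 2^{-j}\|\,\nabla a\,\|_{L^2}\|\,\nabla b\,\|_{L^\infty}$ (a consequence of a first-order Taylor expansion of the convolution kernel of $\Dd_j$) with $a=\Sd_{k-1}{\rm v}$ and $b=\Dd_k{\rm w}$, together with the Bernstein bound on $\|\,\nabla\Dd_k{\rm w}\,\|_{L^\infty}$, I obtain
\begin{equation*}
	\big\|\,\Dd_j(\Th_{\rm v}\cdot\nabla{\rm w})-\Sd_{j-1}{\rm v}\cdot\nabla{\rm w}_j\,\big\|_{L^2}\,\lesssim\,c_j\,2^{-js}\,\|\,\nabla{\rm v}\,\|_{L^2}\,\|\,\nabla{\rm w}\,\|_{\dot H^s},
\end{equation*}
and the correction terms obey the same bound by an easier computation. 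Pairing with $\|\,{\rm w}_j\,\|_{L^2}\lesssim c_j2^{-js}\|\,{\rm w}\,\|_{\dot H^s}$, multiplying by $2^{2js}$ and summing in $j$ via Cauchy--Schwarz yields $\sum_j 2^{2js}\I_1^j({\rm v},{\rm w})\lesssim\|\,\nabla{\rm v}\,\|_{L^2}\|\,{\rm w}\,\|_{\dot H^s}\|\,\nabla{\rm w}\,\|_{\dot H^s}$.

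\emph{The remainder term $\I_2^j$.} Moving one $\Dd_j$ by self-adjointness gives $\I_2^j({\rm v},{\rm w})=\sum_{k\ge j-2}\int_{\RR^2}\big(\Dd_k{\rm v}\cdot\nabla\Sd_{k+1}{\rm w}\big)\,\Dd_j^2{\rm w}$, the constraint $k\ge j-2$ coming from frequency support. I would then apply H\"older's inequality with exponents chosen according to $s$. For $0<s<1$ one places $\nabla\Sd_{k+1}{\rm w}$ in $L^\infty$, using $\|\,\nabla\Sd_{k+1}{\rm w}\,\|_{L^\infty}\lesssim\sum_{l\le k}2^{2l}\|\,\Dd_l{\rm w}\,\|_{L^2}\lesssim 2^{k(1-s)}c_k\|\,\nabla{\rm w}\,\|_{\dot H^s}$, the geometric series converging precisely because $s<1$; combined with $\|\,\Dd_k{\rm v}\,\|_{L^2}\lesssim 2^{-k}c_k\|\,\nabla{\rm v}\,\|_{L^2}$ and $\|\,\Dd_j{\rm w}\,\|_{L^2}\lesssim c_j2^{-js}\|\,{\rm w}\,\|_{\dot H^s}$, the $k$-sum converges (here $s>0$ is used) and Cauchy--Schwarz in $j$ gives $\sum_j 2^{2js}\I_2^j\lesssim\|\,\nabla{\rm v}\,\|_{L^2}\|\,{\rm w}\,\|_{\dot H^s}\|\,\nabla{\rm w}\,\|_{\dot H^s}$. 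For $s>1$ one instead places $\Dd_k{\rm v}$ in $L^\infty$ and $\nabla\Sd_{k+1}{\rm w}$ in $L^2$, so that $\|\,\Dd_k{\rm v}\,\|_{L^\infty}\lesssim 2^k\|\,\Dd_k{\rm v}\,\|_{L^2}$ and $\sum_{k\ge j-2}2^k\|\,\Dd_k{\rm v}\,\|_{L^2}\lesssim 2^{j(1-s)}c_j\|\,{\rm v}\,\|_{\dot H^s}$ (this geometric tail converging because $1-s<0$); multiplying by $2^{2js}$ and summing then produces $\sum_j 2^{2js}\I_2^j\lesssim\|\,\nabla{\rm w}\,\|_{L^2}\|\,{\rm v}\,\|_{\dot H^s}\|\,\nabla{\rm w}\,\|_{\dot H^s}$. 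The endpoint $s=1$ follows by real interpolation between these two estimates. Collecting the bounds for $\I_1^j$ and $\I_2^j$ establishes the displayed inequality, and hence the Lemma.

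The main obstacle is the borderline two-dimensional embedding $\dot H^1(\RR^2)\not\hookrightarrow L^\infty$: it is exactly this failure that prevents a naive $\dot H^s$-estimate of the convective nonlinearity and forces both the commutator structure in $\I_1^j$ and the careful frequency splitting in $\I_2^j$. A secondary difficulty is obtaining a bound uniform over all $s>0$: the low-frequency summation $\nabla\Sd_{k+1}{\rm w}$ appearing in $\I_2^j$ is controlled differently below and above the threshold $s=1$, which is why the argument splits into cases and why the two terms $\|\,\nabla{\rm v}\,\|_{L^2}\|\,{\rm w}\,\|_{\dot H^s}$ and $\|\,\nabla{\rm w}\,\|_{L^2}\|\,{\rm v}\,\|_{\dot H^s}$ both genuinely occur on the right-hand side.
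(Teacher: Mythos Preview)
Your treatment of $\I_1^j$ via the first-order commutator estimate is essentially the paper's argument (the paper packages the same bound as Lemma~\ref{lemma-commutator:TandS}), so that part is fine.

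The gap is in $\I_2^j$ at $s=1$. Your two cases $s\in(0,1)$ and $s>1$ are correct, but the sentence ``the endpoint $s=1$ follows by real interpolation'' does not go through. First, the implicit constants in both of your endpoint bounds blow up as $s\to1$: for $s<1$ the control of $\|\nabla\Sd_{k+1}{\rm w}\|_{L^\infty}$ rests on the summability of $\sum_{m\le0}2^{2m(1-s)}$, and for $s>1$ the tail $\sum_{k\ge j-2}2^{k(1-s)}b_k$ needs $1-s<0$; both degenerate at $s=1$. Second, even choosing endpoints $s_0<1<s_1$ away from $1$, bilinear interpolation lands on the wrong space: writing $T({\rm v},{\rm w})=\Th'_{\nabla{\rm w}}{\rm v}$, your bounds read $T:\dot H^1\times\dot H^{s_0+1}\to\dot H^{s_0}$ and $T:\dot H^{s_1}\times\dot H^1\to\dot H^{s_1}$; no $\theta\in(0,1)$ makes the first-factor interpolant equal to $\dot H^1$ (the equation $(1-\theta)\cdot1+\theta s_1=1$ forces $\theta=0$), so you never recover a bound involving only $\|\nabla{\rm v}\|_{L^2}=\|{\rm v}\|_{\dot H^1}$ at $s=1$.

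The paper handles $\I_2^j$ by a single device valid for every $s>0$, avoiding any case split. Instead of placing one factor in $L^\infty$, it applies the two-dimensional Bernstein inequality $\|\Dd_jf\|_{L^2}\lesssim2^j\|f\|_{L^1}$ to the \emph{outer} block:
\[
\big|\I_2^j\big|\le\sum_{q\ge j-5}\big\|\Dd_j(\Dd_q{\rm v}\cdot\nabla\Sd_{q+2}{\rm w})\big\|_{L^2}\|{\rm w}_j\|_{L^2}\lesssim 2^j\|{\rm w}_j\|_{L^2}\,\|\nabla{\rm w}\|_{L^2}\sum_{q\ge j-5}\|\Dd_q{\rm v}\|_{L^2}.
\]
The factor $2^j$ pairs with $\|{\rm w}_j\|_{L^2}$ to give $\|\nabla{\rm w}_j\|_{L^2}$, and the remaining tail $\sum_{q\ge j-5}\|\Dd_q{\rm v}\|_{L^2}=\|{\rm v}\|_{\dot H^s}\sum_{q\ge j-5}2^{-qs}b_q$ is controlled by Young's convolution inequality as soon as $s>0$. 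Summing in $j$ yields $\sum_j2^{2js}\I_2^j\lesssim\|\nabla{\rm w}\|_{L^2}\|{\rm v}\|_{\dot H^s}\|\nabla{\rm w}\|_{\dot H^s}$ with a constant depending only on $s>0$. The point is that the derivative gained by Bernstein lands on the $j$-fixed factor ${\rm w}_j$ rather than inside the $q$-sum; this is precisely what removes the threshold at $s=1$ created by your $L^\infty$ placements.
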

\noindent
Hence, we multiply both sides of \eqref{reg_prop_equj1} by $2^{2js}$ and we perform a sum over $j\in \ZZ$; we finally deduce that

\noindent 

\begin{equation}\label{reg_prop_equj2}
	\begin{aligned}
	\frac{\rho_0}{2}\frac{\dd}{\dd t}
	\|\,	\uu	\,\|_{\Hh^s}^2
	\,&+\,
	(\eta\,+\,\zeta\,)
	\|\,\nabla \uu\,\|_{\Hh^s}^2
	\,\leq
	\,\sum_{j\in\ZZ}2^{2js}
			\Big(\,
				\I_1^j(\uu(t),\,\uu(t))\,+\,
				\I_2^j(\uu(t),\,\uu(t))
			\Big)\,+\\&+\,
	\mu_0
	\langle\,(M \cdot \nabla H\,),\,\uu\rangle_{\dot H^s}\,+\,
	2\zeta
	\left\langle
	\,
	\left(\,\begin{matrix}\hspace{0.25cm}\partial_2  \omega\\-\partial_1 \omega	\end{matrix}\,\right),\, \uu
	\,
	\right\rangle_{\dot H^s}.
	\end{aligned}
\end{equation}
We use now the estimate
\begin{multline}
 \label{eq:Lorentz_estimate_higher_Sobolev}
\angles{M\cdot\nabla H, \ \uu}_{\Hh^s} \leq \varepsilon \pare{ \norm{\nabla \uu}_{\Hh^s}^2 + \norm{\nabla M}_{\Hh^s}^2 }\\
  + \frac{C}{\varepsilon} \bra{ \Big. 
 \pare{1+\norm{M}_{L^2}^2}\norm{\nabla M}_{L^2}^2 + \norm{\nabla \cG_F}_{L^2}^2 
 }\pare{ \norm{ \uu}_{\Hh^s}^2 + \norm{ M}_{\Hh^s}^2 } + \norm{\nabla\cG_F}_{\Hh^s}^2,
 \end{multline}
whose proof is provided in Appendix \ref{sec:tec_est_Sobolev}.  let us now define the function
\begin{equation}\label{eq:def_Phi}
\Phi_{M, H, \uu, F} = \Big(\, 1+
	\|\, M		\,\|_{L^2}^2
		\,+\,
		\|\, H		\,\|_{L^2}^2
		\,+\,
		\|\,  \uu\,\|_{L^2}^2
	\Big)\Big(\,
	\|\,\nabla M		\,\|_{L^2}^2
		\,+\,
		\|\,\nabla H		\,\|_{L^2}^2
		\,+\,
		\|\, \nabla \uu\,\|_{L^2}^2
	\Big) + \norm{\nabla \cG_F}_{L^2}^2 .
\end{equation}

\begin{lemma}
Under the assumptions of Theorem \ref{thm:uniqueness} the function $ \Phi_{M, H, \uu, F} $ defined in \eqref{eq:def_Phi} is well defined and belongs to the space $ L^1\pare{\bR_{+}} $. 
\end{lemma}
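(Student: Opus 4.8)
The plan is to dominate $\Phi_{M,H,\uu,F}$ in time by quantities already controlled by the $L^2$ energy inequality \eqref{eq:L2_energy_bound} of Theorem \ref{thm:uniqueness}. By that theorem the solution obeys $(\uu,\omega,M,H)\in L^\infty_{\loc}(\RR_+;L^2(\RR^2))\cap L^2_{\loc}(\RR_+;\Hh^1(\RR^2))$ together with \eqref{eq:L2_energy_bound}; since $F\in L^2_{\loc}(\RR_+;L^2)\cap W^{1,2}_{\loc}(\RR_+;\Hh^{-1})$, the right-hand side of \eqref{eq:L2_energy_bound} is finite on every $[0,T]$, so that
\[
\sup_{t\in[0,T]}\Big(\|\uu(t)\|_{L^2}^2+\|H(t)\|_{L^2}^2+\|M(t)\|_{L^2}^2\Big)\,+\,\int_0^T\DD(t)\,\dd t\,=:\,C_T\,<\,\infty .
\]
For well-definedness I would observe that $\cG_F=\nabla\Delta^{-1}F$, so $\nabla\cG_F$ is the image of $F$ under a matrix of second-order Riesz transforms, a Fourier multiplier of order zero bounded on $L^2(\RR^2)$; hence $\|\nabla\cG_F(t)\|_{L^2}\lesssim\|F(t)\|_{L^2}$ and $\nabla\cG_F\in L^2_{\loc}(\RR_+;L^2)$, and together with the regularity of $(\uu,\omega,M,H)$ this shows that $\Phi_{M,H,\uu,F}$ defined in \eqref{eq:def_Phi} is measurable and a.e.\ finite.

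For the integrability, I would estimate, for any $T>0$,
\[
\int_0^T\Phi_{M,H,\uu,F}(t)\,\dd t\,\leq\,\bigl(1+C_T\bigr)\int_0^T\Big(\|\nabla M\|_{L^2}^2+\|\nabla H\|_{L^2}^2+\|\nabla\uu\|_{L^2}^2\Big)\dd t\,+\,\int_0^T\|\nabla\cG_F(t)\|_{L^2}^2\,\dd t ,
\]
using H\"older in time and the uniform bound on $\|(\uu,H,M)(t)\|_{L^2}^2$. The contributions $\|\nabla M\|_{L^2}^2$ and $\|\nabla\uu\|_{L^2}^2$ are part of $\DD(t)$, so their time integral is $\leq C_T$. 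The term $\|\nabla H\|_{L^2}^2$ is \emph{not} contained in $\DD$: here I would use the magnetostatic law $H=-\cQ M+\cG_F$ with $\cQ=\Delta^{-1}\nabla\Div$, so that $\nabla H=-\nabla\cQ M+\nabla\cG_F$ with $\nabla\cQ$ a Fourier multiplier of order one (equivalently, integrate in $\xi$ the pointwise bound \eqref{ineq-Hxi} of Lemma \ref{lemma-ineq-Hxi}), obtaining
\[
\|\nabla H(t)\|_{L^2}\,\lesssim\,\|\nabla M(t)\|_{L^2}\,+\,\|F(t)\|_{L^2},
\]
hence $\int_0^T\|\nabla H\|_{L^2}^2\,\dd t\lesssim\int_0^T\DD(t)\,\dd t+\int_0^T\|F(t)\|_{L^2}^2\,\dd t<\infty$. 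Likewise $\int_0^T\|\nabla\cG_F\|_{L^2}^2\,\dd t\lesssim\int_0^T\|F(t)\|_{L^2}^2\,\dd t<\infty$. Collecting these bounds gives $\int_0^T\Phi_{M,H,\uu,F}(t)\,\dd t<\infty$ for every $T>0$.

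The only genuinely non-automatic point is the treatment of $\|\nabla H\|_{L^2}^2$, which is absent from the dissipation functional $\DD$ and must be reabsorbed through the pseudodifferential structure of the magnetostatic equation (or through Lemma \ref{lemma-ineq-Hxi}) into $\|\nabla M\|_{L^2}^2$ and $\|F\|_{L^2}^2$; everything else follows directly from the $L^2$ energy estimate. Moreover, inspecting the right-hand sides above, all the time integrals involved remain uniformly bounded in $T$ precisely when $F\in L^2(\RR_+;L^2)\cap W^{1,2}(\RR_+;\Hh^{-1})$ — in which case $C_T$ and $\int_0^\infty\|F\|_{L^2}^2$ stay finite — so that under this stronger integrability on $F$ the argument upgrades to the full statement $\Phi_{M,H,\uu,F}\in L^1(\RR_+)$.
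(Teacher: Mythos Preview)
Your argument is correct and follows the same route the paper intends: the paper's own proof is the single line ``It suffice to consider the uniform $L^2$ energy bound provided in \eqref{eq:L2_energy_bound}'', and you have simply unpacked what that sentence means. In particular you make explicit the one point the paper leaves implicit, namely that $\|\nabla H\|_{L^2}^2$ does not literally appear in the dissipation $\DD$ and has to be recovered from the magnetostatic relation $H=-\cQ M+\cG_F$ (equivalently from Lemma \ref{lemma-ineq-Hxi}); you also correctly observe that under the hypotheses of Theorem \ref{thm:uniqueness} alone one only obtains $\Phi_{M,H,\uu,F}\in L^1_{\loc}(\RR_+)$, which is in fact all that is used in the Gronwall step of the proof of Theorem \ref{thm-prop-reg2}.
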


\begin{proof}
It suffice to consider the uniform $ L^2 $ energy bound provided in \eqref{eq:L2_energy_bound}. 
\end{proof}

 Combining the  identity \eqref{reg_prop_equj2} and Lemma \ref{lemma:hig_reg} together with the above inequality \eqref{eq:Lorentz_estimate_higher_Sobolev}, we gather that there exists a suitable positive constant $C$ for which the following inequality holds true:
\begin{multline}\label{reg_prop_equj4}
	\frac{\rho_0}{2}\frac{\dd}{\dd t}
	\|\,	\uu	\,\|_{\Hh^s}^2
	\,+\,
	(\eta\,+\,\zeta\,)
	\|\,\nabla \uu\,\|_{\Hh^s}^2
	 \\
	\,\leq
	C
	\ \Phi_{M, H, \uu, F} \
	\Big(
		\|\,	M	\,\|_{\dot H^s}^2\,+\,
		\|\,\uu	\,\|_{\dot H^s}^2
	\Big) + \varepsilon \norm{\nabla M}_{\Hh^s}^2 + C \  \norm{\nabla\cG_F}_{\Hh^s}^2\,+\,
	2\zeta
	\left\langle
	\,
	\left(\,\begin{matrix}\hspace{0.25cm}\partial_2  \omega\\-\partial_1 \omega	\end{matrix}\,\right),\, \uu
	\,
	\right\rangle_{\dot H^s}. 
\end{multline}
We now apply the dyadic block $\Dd_j$ to the equation of the angular momentum $\omega$ in system \eqref{main_system}. We first observe that $\omega_j\,:=\,\Dd_j \omega$ is a smooth solution of the following PDE
\begin{equation*}
\begin{aligned}
	\rho_0 k\big( \partial_t \omega_j\,-\Sd_{j-1}\uu\cdot \nabla \omega_j\,\big) - \eta'\Delta \omega_j\,&=\,\mu_0\, \Delta_j(M\times H)\,+\,2\zeta (\,\curl \uu_j \,-\,2\omega_j\,)\,-\\&-\,
	\,\rho_0k(\Th_\uu\cdot \nabla \omega_j\,-\,\,\Sd_{j-1}\uu\cdot \nabla \omega_j)\,-\rho_0k\Dd_j(\,\Th'_{\nabla \omega} \uu\,).
\end{aligned}
\end{equation*}
Multiplying both sides by $2^{2js}\omega_j$, integrating over $\RR^2$ and performing a sum over $j\in \ZZ$ we get
\begin{equation*}
	\begin{aligned}
	\frac{\rho_0k}{2}\frac{\dd}{\dd t}
	\|\,	\omega	\,\|_{\Hh^s}^2
	+
	\eta'
	\|\,\nabla \omega\,\|_{\Hh^s}^2
	+
	4\zeta
	\|\, \omega\,\|_{\Hh^s}^2
	=
	-\rho_0k
	\sum_{j\in\ZZ}2^{2js}
	\underbrace{
	\int_{\RR^2}(\,	\Dd_j\Th_\uu\cdot\nabla \omega_j\,-\,\Sd_{j-1}\uu\cdot \nabla \omega_j\,)\cdot \omega_j
	}_{\I^j_1(\uu(t),\,\omega(t))}
	-\\-
	\rho_0k
	\sum_{j\in\ZZ}2^{2js}
	\underbrace{
	\int_{\RR^2}\Dd_j(\,\Th'_{\nabla \omega} \uu\,)\,\cdot \omega_j}_{\I^j_2(\uu(t),\,\omega(t))}
	\,+\,
	\sum_{j\in\ZZ}2^{2js}
	\mu_0\int_{\RR^2}\Dd_j(\,M \times H\,)\cdot \omega_j\,+\,
	2\zeta
	\sum_{j\in\ZZ}2^{2js}
	\int_{\RR^2}
	\curl \uu_j\omega_j.
	\end{aligned}
\end{equation*}
Let us state the following inequality, whose proof is postpones in Appendix \ref{sec:est_MxH}; 
\begin{multline}\label{eq:est_Hs_MxH}
\sum_{j\in\ZZ}2^{2js}
	\int_{\RR^2}\Dd_j(\,M \times H\,)\cdot \omega_j
	\\
\begin{aligned}
	&=\,
	\langle\,
		M \times H
		,\,
		\omega
		\,
	\rangle_{\dot H^s}
	\\
	&\leq\,
	\  {\varepsilon}\norm{\omega}_{\Hh^s}^2 + {\varepsilon} \norm{\nabla M}_{\Hh^s}^2 + \frac{\varepsilon}{2} \norm{\nabla \cG_F}_{\Hh^s}^2 \\
    & \hspace{5mm}  + \frac{C}{\varepsilon} \pare{\norm{M}_{L^2}^{2} \norm{ \nabla M}_{L^2}^{2} + \norm{H}_{L^2}^{2} \norm{ \nabla H }_{L^2}^{2}} \norm{M}_{\Hh^s}^{2} + \frac{C}{\varepsilon} \norm{M}_{L^2}^{2} \norm{ \nabla M}_{L^2}^{2} \norm{\cG_F}_{\Hh^s}^{2}.
    \end{aligned}
\end{multline}
for any $ \varepsilon > 0 $. 
We then proceed similarly as for proving \eqref{reg_prop_equj4}: we combine the above identity with Lemma \ref{lemma:hig_reg}, to get
\begin{multline}\label{reg_prop_equj5}
	\frac{\rho_0k}{2}\frac{\dd}{\dd t}
	\|\,	\omega	\,\|_{\Hh^s}^2
	+
	\eta'
	\|\,\nabla \omega\,\|_{\Hh^s}^2
	+
	2\zeta
	\|\, \omega\,\|_{\Hh^s}^2
	-{\varepsilon} \norm{\nabla M}_{\Hh^s}^2 
	\\ 
	\leq\,
	C \ \Phi_{M, H, \uu, F}
	\Big(\,
	\|\,		\omega	\,\|_{\Hh^s}^2
	\,+\,	
	\|\,		\uu	\,\|_{\Hh^s}^2
	\,+\,
	\|\,		M	\,\|_{\Hh^s}^2
	\Big)
	\,+\,
	2\zeta
	\langle\,
	\curl \uu,\,\omega\,\rangle_{\dot H^s}\\
	\frac{C}{\varepsilon} \norm{M}_{L^2}^{2} \norm{ \nabla M}_{L^2}^{2} \norm{\cG_F}_{\Hh^s}^{2} + C \norm{\nabla \cG_F}_{\Hh^s}^2	
	.
\end{multline}
We now apply the dyadic block $\Dd_j$ to the equation of the magnetic induction $M$ in system \eqref{main_system}. We first observe that $M_j\,:=\,\Dd_j M$ is a smooth solution of the following PDE
\begin{equation*}
	\;\partial_t M_j \,+\,\Sd_{j-1}\uu\cdot \nabla  M_j - \sigma \Delta M_j\,=\,
	-(\Th_\uu\cdot \nabla \omega_j\,-\,\,\Sd_{j-1}\uu\cdot \nabla \omega_j) \,+\,\Dd_j\left(\,\left(\,\begin{matrix}\hspace{0.25cm}M_2\\-M_1	\end{matrix}\,\right)\omega \,\right)\,-\, \frac{1}{\tau}(\,M_j-\chi_0H_j\,)
\end{equation*}
Multiplying both sides by $2^{2js}\omega_j$, integrating over $\RR^2$ and performing a sum over $j\in \ZZ$ we get
\begin{equation*}
	\begin{aligned}
	\frac{\rho_0k}{2}\frac{\dd}{\dd t}
	\|\,M	\,\|_{\Hh^s}^2
	+
	\sigma
	\|\,\nabla M\,\|_{\Hh^s}^2
	+
	\frac{1}{\tau}
	\|\, M\,\|_{\Hh^s}^2
	=
	\sum_{j\in\ZZ}2^{2js}
	\underbrace{
	\int_{\RR^2}(\,	\Dd_j\Th_\uu\cdot\nabla M_j\,-\,\Sd_{j-1}\uu\cdot \nabla M_j\,)\cdot M_j
	}_{\I^j_1(\uu(t),\,M(t))}
	-\\-
	\sum_{j\in\ZZ}2^{2js}
	\underbrace{
	\int_{\RR^2}\Dd_j(\Th'_{\nabla M} \uu)\cdot M_j}_{\I^j_2(\uu(t),\,M(t))}
	\,+\,
	\mu_0
	\Big\langle\Big(\begin{matrix}\hspace{0.25cm}M_2\\-M_1	\end{matrix}\Big)\omega,\, M\Big\rangle_{\dot H^s}
	\,+\,
	\frac{\chi_0}{\tau}
	\langle\,
		H,\, M
	\rangle_{\dot H^s}
	\end{aligned}
\end{equation*}
With a procedure very similar to the one adopted in order to prove the inequality \eqref{eq:est_Hs_MxH} we deduce the following bound; 
\begin{multline*}
	\Big\langle\Big(\begin{matrix}\hspace{0.25cm}M_2\\-M_1	\end{matrix}\Big)\omega,\, M\Big\rangle_{\dot H^s}
	\\ \leq\
	\pare{\norm{\omega}_{L^2}^{1/2}\norm{\nabla \omega}_{L^2}^{1/2}\norm{M}_{\Hh^s}^{1/2}\norm{\nabla M}_{\Hh^s}^{1/2}
	+
	\norm{M}_{L^2}^{1/2}\norm{\nabla M}_{L^2}^{1/2}\norm{\omega}_{\Hh^s}^{1/2}\norm{\nabla \omega}_{\Hh^s}^{1/2}	
	} \norm{M}_{\Hh^s}, 
\end{multline*}
and hence applying a Young convexity inequality
\begin{multline*}
\Big\langle\Big(\begin{matrix}\hspace{0.25cm}M_2\\-M_1	\end{matrix}\Big)\omega,\, M\Big\rangle_{\dot H^s}
	\\ 
	\leq  
	\frac{1}{2\tau}\norm{M}_{\Hh^s}^2 + \varepsilon\pare{\norm{\nabla M}_{\Hh^s}^2 + \norm{\nabla \omega}_{\Hh^s}^2}  +\frac{C}{\varepsilon} \ \Phi_{M, H, \uu, F} \ \Big(\,
	\|\,		\omega	\,\|_{\Hh^s}^2
	\,+\,	
	\|\,		\uu	\,\|_{\Hh^s}^2
	\,+\,
	\|\,		M	\,\|_{\Hh^s}^2
	\Big). 
\end{multline*}
Moreover
\begin{equation*}
	\frac{\chi_0}{\tau}\|\,H		\,\|_{\dot H^s}^2
	\,+\,
	\frac{\chi_0}{\tau}
	\langle
	\,H,\,M\,
	\rangle_{\dot H^s}
	\,\leq\,
	C\|\,F\,\|_{\dot H^s}^2
\end{equation*}
Proceeding as for proving \eqref{reg_prop_equj4}, we combine the above identity with Lemma \ref{lemma:hig_reg}, to get
\begin{multline}\label{reg_prop_equj6}
	\frac{\rho_0k}{2}\frac{\dd}{\dd t}
	\|\,M	\,\|_{\Hh^s}^2
	+
	\sigma
	\|\,\nabla M\,\|_{\Hh^s}^2
	+
	\frac{1}{2 \tau}
	\|\, M\,\|_{\Hh^s}^2
	\,+\,
	\frac{\chi_0}{\tau}
	\|\,H\,\|_{\dot H^s}^2
	\\ \leq
	C\|\,\cG_F\,\|_{\Hh^{s}}^2\,+\,
	\frac{C}{\varepsilon} \ \Phi_{M, H, \uu, F} \ \Big(\,
	\|\,		\omega	\,\|_{\Hh^s}^2
	\,+\,	
	\|\,		\uu	\,\|_{\Hh^s}^2
	\,+\,
	\|\,		M	\,\|_{\Hh^s}^2
	\Big) + \varepsilon \norm{\nabla \omega}_{\Hh^s}
\end{multline}
Let us now define the following auxiliary function
\begin{equation*}
\psi = \frac{C}{\varepsilon} \norm{M}_{L^2}^{2} \norm{ \nabla M}_{L^2}^{2} \norm{\cG_F}_{\Hh^s}^{2} + C \norm{\nabla \cG_F}_{\Hh^s}^2	, 
\end{equation*}
indeed $ \psi\in L^1\pare{\bR_+} $ thanks to the $ L^2 $ energy bound for $ M $ and the regularity hypothesis assumed on $ F $.  
Combining the inequalities \eqref{reg_prop_equj4}, \eqref{reg_prop_equj5}, \eqref{reg_prop_equj6}, considering the cancellation
\begin{equation*}
	2\zeta
	\left\langle
	\,
	\left(\,\begin{matrix}\hspace{0.25cm}\partial_2  \omega\\-\partial_1 \omega	\end{matrix}\,\right),\, \uu
	\,
	\right\rangle_{\dot H^s} + 2\zeta
	\langle\,
	\curl \uu,\,\omega\,\rangle_{\dot H^s} =0, 
\end{equation*}
  integrating in time,  we immediately obtain the bound
\begin{equation*}
	\EE_s(t)
	\,+\,
	\int_0^t
	\DD_s\pare{ t'}
	\dd t'
	\,\leq\,	
	\EE_s(0)\,+\,
	C
	\int_0^t
	\Phi_{M, H, \uu, F}\pare{t'}\ 
	\EE_s\pare{ t'}\dd t' +  \int_0^t \psi\pare{t'}\ \d t',
\end{equation*}
for all time $ t \geq 0$. Therefore,  this relation together with the Gronwall's inequality conclude the proof of Theorem \ref{thm-prop-reg2}, provided we show the bounds of Lemma \ref{lemma:hig_reg}.
\hfill $ \Box $\\

\noindent 
In order to complete the proof of Theorem \ref{thm-prop-reg2}, it remains us to get the estimates of Lemma \ref{lemma:hig_reg}.
\begin{proof}[Proof of Lemma \ref{lemma:hig_reg}] We begin with proving inequality $(i)$, which follows from the following commutator estimate (cf. \cite{BCD}, Lemma $10.25$)
\begin{lemma}\label{lemma-commutator:TandS}
 There exists a positive constant $C$ such that, for any suitable functions ${\rm v}$ and ${\rm w}$, the following inequality holds true
 \begin{equation*}
 	\Big\|\,\Dd_j\big(\,\dot{T}_{\rm v}{\rm w}\,\big)\,-\,\Sd_{j-1}{\rm v}\,\Dd_j{\rm w}\,\Big\|_{L^2(\RR^2)}
 	\,\leq\,
 	C
 	\|\,\nabla {\rm v}\,\|_{L^2(\RR^2)}
 	\sum_{|q-j|\leq5}
 	\|\, {\rm w}_q\,\|_{L^2(\RR^2)}
 \end{equation*}
\end{lemma}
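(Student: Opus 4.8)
The inequality is the classical commutator estimate of Bony's calculus, stated here in its homogeneous two--dimensional form (compare \cite[Lemma~10.25]{BCD}); the plan is to reproduce its short proof, underlining the two places where the two--dimensionality of $\RR^2$ is used. First I would unwind $\Th_{\rm v}{\rm w}=\sum_{q\in\ZZ}\Sd_{q-1}{\rm v}\,\Dd_q{\rm w}$ and invoke the standard spectral localisation: $\Sd_{q-1}{\rm v}$ is Fourier--supported in a ball of radius $\lesssim 2^{q}$ while $\Dd_q{\rm w}$ lives in a dyadic annulus of radius $\sim 2^{q}$, so the block $\Sd_{q-1}{\rm v}\,\Dd_q{\rm w}$ is supported in an annulus of radius $\sim 2^{q}$, whence $\Dd_j$ annihilates it unless $|q-j|\le N_0$ for a universal integer $N_0$ which, once the Littlewood--Paley functions are fixed, may be taken $\le 5$. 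Therefore $\Dd_j(\Th_{\rm v}{\rm w})=\sum_{|q-j|\le N_0}\Dd_j(\Sd_{q-1}{\rm v}\,\Dd_q{\rm w})$.

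Next I would split each summand as $\Dd_j(\Sd_{q-1}{\rm v}\,\Dd_q{\rm w})=[\Dd_j,\Sd_{q-1}{\rm v}]\Dd_q{\rm w}+\Sd_{q-1}{\rm v}\,\Dd_j\Dd_q{\rm w}$, and use $\Dd_j\Dd_q=0$ for $|q-j|\ge 2$ together with the partition identity $\sum_{|q-j|\le 1}\Dd_j\Dd_q=\Dd_j$ to recast the left--hand side of the Lemma as
\[
\Dd_j\big(\Th_{\rm v}{\rm w}\big)-\Sd_{j-1}{\rm v}\,\Dd_j{\rm w}
=\sum_{|q-j|\le N_0}[\Dd_j,\Sd_{q-1}{\rm v}]\,\Dd_q{\rm w}
\;+\;\sum_{|q-j|\le 1}\big(\Sd_{q-1}{\rm v}-\Sd_{j-1}{\rm v}\big)\,\Dd_j\Dd_q{\rm w}.
\]
For the commutator terms I would use the elementary bound $\|[\Dd_j,a]b\|_{L^2}\lesssim 2^{-j}\|\nabla a\|_{L^\infty}\|b\|_{L^2}$, obtained by representing $\Dd_j$ through its convolution kernel $2^{2j}h(2^{j}\cdot)$ with $h\in\mathcal{S}(\RR^2)$ and expanding $a(x-z)-a(x)=-\int_0^1 z\cdot\nabla a(x-\tau z)\,\dd\tau$. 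Taking $a=\Sd_{q-1}{\rm v}$ and $b=\Dd_q{\rm w}$, Bernstein's inequality gives $\|\nabla\Sd_{q-1}{\rm v}\|_{L^\infty}=\|\Sd_{q-1}\nabla{\rm v}\|_{L^\infty}\lesssim 2^{q}\|\nabla{\rm v}\|_{L^2}$, the frequency power being exactly $2^{q}$ because the dimension is $2$; since $|q-j|\le N_0$ the prefactor $2^{-j}2^{q}$ is $O(1)$, so this first sum is $\lesssim\|\nabla{\rm v}\|_{L^2}\sum_{|q-j|\le 5}\|{\rm w}_q\|_{L^2}$.

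For the remaining sum I would note that, for $|q-j|\le 1$, the difference $\Sd_{q-1}{\rm v}-\Sd_{j-1}{\rm v}$ is either $0$ or a single dyadic block $\pm\Dd_{k}{\rm v}$ with $|k-j|\le 2$, and again by Bernstein $\|\Dd_{k}{\rm v}\|_{L^\infty}\lesssim 2^{k}\|\Dd_{k}{\rm v}\|_{L^2}\lesssim\|\nabla{\rm v}\|_{L^2}$; hence this sum too is $\lesssim\|\nabla{\rm v}\|_{L^2}\sum_{|q-j|\le 5}\|{\rm w}_q\|_{L^2}$, and adding the two contributions finishes the proof. The main obstacle is precisely this endpoint Bernstein step, $\|\Dd_k g\|_{L^\infty}\lesssim\|\nabla g\|_{L^2}$ with constant uniform in $k$: it holds in dimension two and fails in higher dimension, where one would instead need $\nabla{\rm v}$ in $L^\infty$ or in a suitable Besov/BMO scale. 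A harmless technical caveat is that $\Sd_{q-1}{\rm v}$ must be a genuine function for the kernel computation to be taken literally; in the application ${\rm v}\in\Hh^{s}\cap\Hh^{1}$ with $s>0$, which embeds into $L^{p}_{\mathrm{loc}}(\RR^2)$, there is no issue, and in any case the commutator depends on ${\rm v}$ only through $\nabla\Sd_{q-1}{\rm v}\in L^2$, so a constant may be subtracted freely.
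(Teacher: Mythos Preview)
Your proof is correct and is precisely the classical argument behind \cite[Lemma~10.25]{BCD}. Note that the paper does not supply its own proof of this lemma: it merely cites \cite{BCD}, so there is nothing further to compare against---you have simply filled in the details the paper omits, and correctly flagged the two places (the Bernstein step $\|\Sd_{q-1}\nabla{\rm v}\|_{L^\infty}\lesssim 2^{q}\|\nabla{\rm v}\|_{L^2}$ and the bound $\|\Dd_k{\rm v}\|_{L^\infty}\lesssim\|\nabla{\rm v}\|_{L^2}$) where the exponent matches only because $d=2$.
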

\noindent By taking advantage of the mentioned lemma, we get the following bound:
\begin{equation*}
	\I_1^j(t)\,\leq\, C\|\,\nabla \uu(t)\,\|_{L^2}\|\,\uu(t)\,\|_{\Hh^s}\|\,\nabla \uu_j(t)\,\|_{L^2}2^{-js}a_j(t),
\end{equation*}
where the sequence $(\,a_j(t)\,)_{j\in\ZZ}$ belongs to $\ell^2(\ZZ)$, being defined by means of
\begin{equation*}
	a_j(t)\,=\,\sum_{|q-j|\leq5}
 	\frac{2^{qs}\|\, \uu_q(t)\,\|_{L^2(\RR^2)}}{\|\,\uu(t)\,\|_{\Hh^s(\RR^2)}}.
\end{equation*}
\noindent
It remains to control $\I_2^{(j)}$, the non-linear term associated to the reminder $\Delta_jT'_{ \nabla \uu\cdot} \uu$. Since $\Div \uu\,=\,0$, by a repeated use of Bernstein inequalities,
one has
\begin{align*}
	\I^{(j)}_2\,&=\,\int\Dd_j T'_{ \nabla \uu\cdot} \uu\,\uu_j\,=\, 
	\sum_{q\geq j-5}
	\int\Dd_j\bigr( \Dd_q \uu\,\cdot \nabla \Sd_{q+2} \uu\bigr)\,\uu_j \\
&\leq\,	 	
	C\sum_{q\geq j-5} 2^j\,
	\|\,  \Dd_j \bigl( \Delta_q \uu\, \Sd_{q+2}\nabla \uu\bigr)	\,\|_{L^1}\,
	\|			\uu_j			  							\|_{L^2}\,\\
	&\leq\, 	
	C\sum_{q\geq j-5} \|\,\Dd_q\uu\,\|_{L^2}\,\|\Sd_{q+2}\nabla \uu\,\|_{L^2}\,
	\|			\nabla \uu_j		  								\|_{L^2} \\
&\leq\,C\,\|\,\nabla \uu\,\|_{L^2}\,\|\,\nabla \uu_j\,\|_{L^2}\sum_{q\geq j-5} \|\Dd_q \uu\|_{L^2}\,.
\end{align*}
At this point, we remark that
$$
\sum_{q\geq j-5}\|\,\Dd_q \uu\,\|_{L^2}\,\leq\,\left\|\, \uu\,\right\|_{\dot H^s}\,\sum_{q\geq j-5}2^{-qs}\,b_q\,,
$$
where the sequence $\bigl(b_q(t)\bigr)_{q\geq-1}$ is defined, for all time $t\geq0$, by the formula
$$
b_q(t)\,:=\,\frac{2^{qs}\,\|\,\Dd_q\, \uu(t)\|_{H^s}}{\| \,\uu(t)\,\|_{H^s}}\,.
$$
Notice that $\bigl(b_q(t)\bigr)_{q\geq-1}$ belongs to $\ell^2$ and it has unitary norm. In the end, we gather

$$
\I^{(j)}_2\,\leq\,C\,\|\,\nabla \uu\,\|_{L^2}\,\|\, \uu\,\|_{\dot H^s}\,\|\,\nabla \uu_j\,\|_{L^2}\,\sum_{q\geq j-5}2^{-qs}\,b_q\,.
$$
This estimate concludes the proof of inequality $(iii)$, and so of the whole Lemma.

\end{proof}

\appendix

\section{A compactness result}\label{sec:compactness}

In this small section we provide a self-contained proof of Theorem \ref{thm:AL_fractional}, the proof we present here is a slight modification of \cite[Theorem 5.2, p. 61]{Lions69}.\\

 Let us consider a sequence $ \pare{v_n}_n $ in a bounded subset of $ H^\gamma\pare{\bra{0, T}; \ X_0, X_1} $, up to a (non relabeled) subsequence $ v_n \rhu v $ in $ H^\gamma\pare{\bra{0, T}; \ X_0, X_1} $, whence w.l.o.g. we can assume $ v=0 $. If we prove that $ v_n\to 0 $ in $ L^2\pare{\bra{0, T}; X_1} $ we can argue by interpolation  that $ v_n\to 0 $ in $ L^2\pare{\bra{0, T}; X} $. \\
 
 Let us suppose that $ v_n $ is the restriction on $ \bra{0, T} $ of a $ w_n \in H^\gamma\pare{\bR; \ X_0, X_1} $ supported (in time) in $ \bra{-1, T+1} $. Indeed $ w_n \rhu 0 $ in $ H^\gamma\pare{\bR_+ ; \ X_0, X_1} $, we must hence prove that
 
 \begin{equation}\label{eq:conv_Jn}
J_n = \int _{-\infty}^{+\infty} \norm{\cF_t w_n \pare{\tau}}_{X_1}^2 \d \tau 
 \xrightarrow{n\to\infty}0. 
 \end{equation}
 Selecting a $ M>0 $ we can say that
 \begin{align*}
 J_n & = J_{n, M} + J_n^M, 
 \end{align*}
 where
 \begin{align*}
 J_n^M & = \int _{\av{\tau}\geq M} \norm{\cF_t w_n \pare{\tau}}_{X_1}^2 \d \tau, \\
 J_{n, M} & = \int _{\av{\tau}\leq M} \norm{\cF_t w_n \pare{\tau}}_{X_1}^2 \d \tau.
 \end{align*}
 Since, by hypothesis, the sequences $ \pare{\ \cF_t w_n}_n, \  \pare{\av{\tau}^\gamma \cF_t w_n}_n $ are bounded in the space $ L^2\pare{\bR; \ X_1 } $ we can control the high frequency part $ J_n^M  $ as 
 \begin{align*}
 J_n^M & = \int _{\av{\tau}\geq M} \pare{1+\av{\tau}^{2\gamma}} \norm{\cF_t w_n \pare{\tau}}_{X_1}^2 \cdot \frac{1}{\pare{1+\av{\tau}^{2\gamma}}} \d \tau, \\
 & \leq \frac{C}{1+M^{2\gamma}} < \frac{\varepsilon}{2}, 
 \end{align*}
 if $ M>\pare{\frac{2C}{\varepsilon}-1}^{\frac{1}{2\gamma}} $. \\
 
 We must now control the low-frequency part $ J_{n, M} $. In order to do so let us consider a function $ \psi\in\cC^\infty_c\pare{\bR} $, such that $ \psi\pare{t}\equiv 1 $ for each $ t\in\bra{-1, T+1} $. In such setting $ w_n = w_n \psi $, whence
 \begin{equation}\label{eq:Fourier_wn}
 \cF_t w_n\pare{\tau}=\int _{-\infty}^{+\infty} w_n\pare{t}\bra{e^{-2\pi i \ \tau t}\psi\pare{t}}\d t.
 \end{equation}
 
 \noindent We want to prove that
 \begin{equation*}
 \text{for each }\tau\in \bR \hspace{5mm} \cF_t w_n\pare{\tau}\rhu 0 \text{ in } X_0. 
 \end{equation*}

 \noindent In order to do so let us consider a $ \phi\in X_0' $ and a generic $ \tau\in\bR $, indeed
 \begin{align*}
 \psc{\cF_t w_n\pare{\tau}}{\phi}_{X_0\times X_0'} & = \int \cF_t w_n\pare{x, \tau} \ \phi\pare{x}\dx, \\
 & = \int \pare{\int _{-\infty}^{+\infty} w_n\pare{x, t}\bra{e^{-2\pi i \ \tau t}\psi\pare{t}}\d t} \ \phi\pare{x}\dx, \\
 & = \int \int _{-\infty}^{+\infty}  w_n\pare{x, t} \underbrace{\bra{e^{-2\pi i \ \tau t}\psi\pare{t} \phi\pare{x}}}_{=\Phi_\tau\pare{x, t}} \d \pare{x, t}, \\
 & = \psc{w_n}{\Phi_\tau}_{L^2\pare{\bra{0, T}; X_0}\times L^2\pare{\bra{0, T}; X_0'}}, \\
 & \xrightarrow{n\to \infty} 0, 
 \end{align*}
 since $ w_n\rhu 0 $ in $ L^2\pare{\bra{0, T}; X_0} $. Moreover since $ X_0\Subset X_1 $ we deduce that
  \begin{equation*}
 \text{for each }\tau\in \bR \hspace{5mm} \cF_t w_n\pare{\tau}\to 0 \text{ in } X_1. 
 \end{equation*}
 
  Using the identity \eqref{eq:Fourier_wn} we can deduce now that
 \begin{equation*}
 \norm{\cF_t w_n\pare{\tau}}_{X_1}\leqslant\norm{w_n}_{L^2\pare{\bR; X_1}} \norm{e^{-2\pi i \ \tau t}}_{L^2\pare{\bR}}\leq C < \infty, 
 \end{equation*}
 whence using Lebesgue dominated convergence theorem we deduce that
 \begin{equation*}
 \cF_t w_n \xrightarrow{n\to \infty} 0 \text{ in } L^1_\tau\pare{\bR; X_1}. 
 \end{equation*}
 
 \noindent This does not suffice still in order to prove the convergence \eqref{eq:conv_Jn}, it is hence here that we use the localization of $ J_{n , M} $ onto the low frequencies $ \av{\tau}\leq M $, using a Bernstein inequality in fact we can argue that
 \begin{equation*}
 J_{n , M} \leq C M^{1/2} \norm{\cF w_n}_{L^1_\tau\pare{\bR; X_1}}\xrightarrow{n\to\infty} 0, 
 \end{equation*}
 concluding the proof of Theorem \ref{thm:AL_fractional}.

\section{Technical estimates}

In the present technical section we will use continuously the following technical estimate
\begin{equation}\label{eq:regularity_dyadic}
\norm{\Dd_j f}_{L^2}\leq C\ c_j \  2^{-qs}\norm{f}_{\Hh^s}, 
\end{equation}
for any $ j\in\ZZ $ and a $ \pare{c_j}_{j\in\ZZ}=\pare{c_j\pare{f}}_{j\in\ZZ}\in \ell^2\pare{\ZZ} $ so that $ \sum_j c_j^2=1 $. 

\subsection{Proof of Lemma \ref{comm-est}}
\noindent This section is devoted to the proof of Lemma \ref{comm-est}, which played a major role in the uniqueness result of weak solutions for system \eqref{main_system}, in Section \ref{sec-uniq}. We recall that the uniqueness result holds thanks to suitable energy estimates at a level of Sobolev regularity 
$\Hh^{-1/2}(\RR^2)$, in particular the one of Lemma \ref{comm-est}, which can be summarized into
\begin{equation*}
			\langle\,  {\rm v}\cdot \nabla B,\, B\,\rangle_{\Hh^{-\frac{1}{2}}(\RR^2)}\,\leq\, C \|\,\nabla {\rm v}\,\|_{L^2(\RR^2)}\|\,\nabla B\,\|_{\Hh^{-\frac{1}{2}}}\|\,B\,\|_{\Hh^{-\frac{1}{2}}},
		\end{equation*}
We then aim to prove
a generalization of the above inequality, more precisely we perform a suitable estimate at a level of  Sobolev regularity $\Hh^\Tt(\RR^2)$, for any real index $\Tt\in\RR_+$ with $\Tt>-1$. This range specifically includes the case of $\Hh^{-1/2}(\RR^2)$, with $\Tt=-1/2$. We aim at proving the following general statement.
\begin{lemma}\label{comm-est:appendix}
Let $\Tt>-1$, then 
for any divergence-free vector field $\rm v$ in $\Hh^{1}(\RR^2)$ and any vector field $B$ in $\Hh^{\Tt}(\RR^2)$, the following bound
holds
		\begin{equation*}
			\langle\,  {\rm v}\cdot \nabla B,\, B\,\rangle_{\Hh^{\Tt}(\RR^2)}\,\leq\, C \|\,\nabla {\rm v}\,\|_{L^2(\RR^2)}\|\,\nabla B\,\|_{\Hh^{\Tt}(\RR^2)}\|\,B\,\|_{\Hh^{\Tt}(\RR^2)},
		\end{equation*}
		for a suitable positive constant $C$.
	\end{lemma}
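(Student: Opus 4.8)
The plan is to run a Bony decomposition of the convective term, using the divergence-free condition on $\rm v$ in two ways: first to cancel one resonant contribution, and then to recast the remaining part in divergence form. Since $\Div\,{\rm v}=0$ we have ${\rm v}\cdot\nabla B=\Div\,({\rm v}\otimes B)$; splitting ${\rm v}\otimes B=\Th_{\rm v}\otimes B+\Th_B\otimes {\rm v}+\dot R({\rm v},B)$ and using once more $\Div\,\Sd_{j-1}{\rm v}=0$ (so that $\Div\,(\Th_{\rm v}\otimes B)=\Th_{\rm v}\cdot\nabla B$), I would reduce the bound to estimating the two pieces in
\begin{equation*}
	\langle\,{\rm v}\cdot\nabla B,\,B\,\rangle_{\Hh^\Tt}\,=\,\langle\,\Th_{\rm v}\cdot\nabla B,\,B\,\rangle_{\Hh^\Tt}\,+\,\langle\,\Div\,G,\,B\,\rangle_{\Hh^\Tt},\qquad G\,:=\,\Th_B\otimes {\rm v}+\dot R({\rm v},B),
\end{equation*}
the first of which carries high frequencies on $B$, the second on $\rm v$.

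\textbf{The paraproduct term.} Expanding the first inner product dyadically and noting that $\tfrac12\int_{\RR^2}\Sd_{j-1}{\rm v}\cdot\nabla|\Dd_jB|^2\dx=-\tfrac12\int_{\RR^2}(\Div\,\Sd_{j-1}{\rm v})|\Dd_jB|^2\dx=0$, one is left with
\begin{equation*}
	\langle\,\Th_{\rm v}\cdot\nabla B,\,B\,\rangle_{\Hh^\Tt}\,=\,\sum_{j\in\ZZ}2^{2j\Tt}\int_{\RR^2}\big(\Dd_j(\Th_{\rm v}\cdot\nabla B)-\Sd_{j-1}{\rm v}\cdot\nabla\Dd_jB\big)\cdot\Dd_jB\,\dx.
\end{equation*}
Applying Lemma \ref{lemma-commutator:TandS} to each component $\Th_{{\rm v}_k}(\partial_kB)$ and summing over $k$ gives $\|\Dd_j(\Th_{\rm v}\cdot\nabla B)-\Sd_{j-1}{\rm v}\cdot\nabla\Dd_jB\|_{L^2}\lesssim\|\nabla {\rm v}\|_{L^2}\sum_{|q-j|\leq5}\|\Dd_q\nabla B\|_{L^2}$. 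Plugging this in, writing $\|\Dd_jB\|_{L^2}=b_j2^{-j\Tt}\|B\|_{\Hh^\Tt}$ and $\|\Dd_q\nabla B\|_{L^2}=c_q2^{-q\Tt}\|\nabla B\|_{\Hh^\Tt}$ with $(b_j),(c_q)\in\ell^2(\ZZ)$ of unit norm, absorbing the bounded factor $2^{(j-q)\Tt}$ (since $|q-j|\leq5$) and using Cauchy--Schwarz, one gets exactly the wanted $\lesssim\|\nabla {\rm v}\|_{L^2}\|\nabla B\|_{\Hh^\Tt}\|B\|_{\Hh^\Tt}$.

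\textbf{The divergence term.} An integration by parts in $\Hh^\Tt$ yields $\langle\Div\,G,B\rangle_{\Hh^\Tt}=-\langle G,\nabla B\rangle_{\Hh^\Tt}\leq\|G\|_{\Hh^\Tt}\|\nabla B\|_{\Hh^\Tt}$, so it remains to prove $\|G\|_{\Hh^\Tt}\lesssim\|\nabla {\rm v}\|_{L^2}\|B\|_{\Hh^\Tt}$. For $\Th_B\otimes {\rm v}$, where the high frequency sits on $\rm v$, I would use the Bernstein bound $\|\Sd_{q-1}B\|_{L^\infty}\lesssim2^{q(1-\Tt)}\|B\|_{\Hh^\Tt}$ (which needs $\Tt<1$, in particular it covers $\Tt=-1/2$) together with $\|\Dd_q{\rm v}\|_{L^2}=2^{-q}\|\Dd_q\nabla {\rm v}\|_{L^2}$, and conclude by almost orthogonality. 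For $\dot R({\rm v},B)$, Bernstein from $L^1$ to $L^2$ on $\RR^2$ gives $\|\Dd_j\dot R({\rm v},B)\|_{L^2}\lesssim 2^{j}\sum_{q\geq j-3,\,|\nu|\leq1}\|\Dd_q{\rm v}\|_{L^2}\|\Dd_{q+\nu}B\|_{L^2}$; writing the two factors via their $\ell^2$ coefficients, multiplication by $2^{j\Tt}$ turns this into a convolution sum $\sum_{q\geq j-3}2^{(j-q)(\Tt+1)}(\cdots)_q$ whose kernel is summable \emph{precisely because $\Tt+1>0$}, and Young's inequality closes the bound. Adding the two contributions finishes the proof.

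\textbf{Main obstacle.} The delicate feature is that $\rm v$ lives only in $\Hh^1(\RR^2)$, the critical space $\Hh^{d/2}$ in dimension $d=2$, which does \emph{not} embed into $L^\infty$; hence no estimate of the form $\|{\rm v}\cdot\nabla B\|_{\Hh^\Tt}\lesssim\|{\rm v}\|_{L^\infty}\|\nabla B\|_{\Hh^\Tt}$ is available, and each step must instead exploit the spectral localization of the dyadic blocks of $\rm v$ through Bernstein inequalities, in tandem with the divergence-free cancellation. The second, complementary difficulty is the (possibly quite negative) regularity $\Tt$: it is in the remainder term $\dot R({\rm v},B)$ that one must pay with $\Tt+1>0$, which is exactly the hypothesis $\Tt>-1$.
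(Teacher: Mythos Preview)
Your proof follows essentially the same route as the paper's: Bony decomposition, the divergence-free cancellation $\int_{\RR^2}\Sd_{j-1}{\rm v}\cdot\nabla\Dd_jB\cdot\Dd_jB=0$, and the commutator Lemma~\ref{lemma-commutator:TandS} for the $\Th_{\rm v}\cdot\nabla B$ piece, with the constraint $\Tt>-1$ arising in both cases from the summability of $2^{(j-q)(\Tt+1)}$ in the remainder estimate. The one organizational difference is in the non-paraproduct part: you decompose $v\otimes B$ first and treat $\Th_B\otimes{\rm v}$ and $\dot R({\rm v},B)$ separately, bounding the former via $\|\Sd_{q-1}B\|_{L^\infty}\lesssim 2^{q(1-\Tt)}\|B\|_{\Hh^\Tt}$, which forces the extra restriction $\Tt<1$ you flag; the paper instead keeps the combined piece $\Th'_{\nabla B}{\rm v}=\sum_q\Dd_q{\rm v}\cdot\nabla\Sd_{q+2}B$ and bounds it in one shot through an $L^1\to L^2$ Bernstein step, which produces $\|\nabla\Sd_{q+2}B\|_{L^2}$ rather than an $L^\infty$ norm on $B$ and so does not require $\Tt<1$. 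For the paper's actual application ($\Tt=-\tfrac12$) the two arguments are interchangeable.
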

\begin{proof}
	We recast the nonlinear term ${\rm v}\cdot \nabla B$ making use of the Bony decomposition
	\begin{equation*}	
		{\rm v}\cdot \nabla B\,=\,\Th_{\rm v} \nabla B\,+\,\Th'_{\nabla B}{\rm v},
	\end{equation*}
	where we recall that the homogeneous paraproduct $,\Th_{\rm v} \nabla B$ and the homogeneous reminder $\Th'_{\nabla B}{\rm v}$ are defined as
	\begin{equation*}
	\Th_{\rm v}\cdot \nabla B\,=\,\sum_{j\in\ZZ}\Sd_{j-1}{\rm v}\cdot \nabla \Dd_jB\quad\quad\text{and}\quad\quad
	\Th_{\nabla B}'{\rm v}\,=\,\sum_{j\in\ZZ}\Dd_{j}{\rm v}\cdot \nabla \Sd_{j+1}B,
	\end{equation*}
	For the sake of an unified presentation, we now denote by $B_j:=\Dd_j B$ and by ${\rm v}_j:= \Dd_j{\rm v}$. 
	Thus, 
	thanks to the isomorphism between Hilbert spaces $\BB_{2,2}^{s}(\RR^2)\,\simeq\,\Hh^{\Tt}(\RR^2)$, we can reformulate the $\Hh^{\Tt}$-inner product  
	of the statement  as follows:
	\begin{equation}\label{appx-lemma-com-ineq1}
		\begin{aligned}
			\sum_{j\in\ZZ}2^{2j\Tt}\int_{\RR^2}\Dd_j({\rm v}\cdot \nabla B)\cdot B_j\,=\,
			\sum_{j\in\ZZ}
			2^{2j\Tt}
			\bigg\{
			\int_{\RR^2}(\Th_{\rm v} \cdot \nabla B_j\,-\,\Sd_{j-1}{\rm v}\nabla B_j)\cdot B_j
			\,+\\+\,
			\int_{\RR^2}
			(\Sd_{j-1}{\rm v}\cdot \nabla B_j)
			\cdot B_j
			\,+\,
			\int_{\RR^2}
			\Dd_j(\Th_{\nabla B}{\rm v}) \cdot B_j
			\bigg\}.
		\end{aligned}
	\end{equation}
	Now, the free divergence condition on $\rm v$ is automatically  transferred in its localization $\Sd_{j-1}{\rm v}$, which means 
	$\Div\,\Sd_{j-1}{\rm v}\,=\,0$. 
 	This property leads to the standard cancellation 
 	\begin{equation}\label{appx-lemma-eq2}
 		\int_{\RR^2}	(\,\Sd_{j-1}{\rm v}\cdot \nabla B_j\,)\cdot  B_j\,=\,0, \quad\quad
 		\text{for any}\quad j\in \ZZ.
 	\end{equation}
	We then proceed to estimate the remaining term, the first making use of Lemma \ref{lemma-commutator:TandS} while the second one by  a standard 
 	Bernstein-type inequality. More precisely, Lemma \ref{lemma-commutator:TandS}  yields
 	\begin{equation}\label{appx-lemma-ineq4}
 	\begin{aligned}
 		\sum_{j\in\ZZ}2^{2j\Tt}
 		\int_{\RR^2}(\Th_{\rm v} \cdot \nabla B_j\,-\,\Sd_{j-1}{\rm v}\cdot \nabla B_j)\cdot B_j
 		&\lesssim\,
 		\sum_{j\in\ZZ}2^{2js}
 		\|\,\Th_{\rm v} \cdot \nabla B_j\,-\,\Sd_{j-1}{\rm v}\cdot \nabla B_j\,\|_{L^2}\|\,B_j\,\|_{L^2}
 		\\
 		&\lesssim
 		\sum_{j\in\ZZ}2^{2js}
 		\sum_{|q-j|\leq 5}\|\,\nabla {\rm v}\,\|_{L^2}\|\,B_q\|_{L^2}\|\,B_j\,\|_{L^2}\\
 		&\lesssim
 		\|\,\nabla {\rm v}\,\|_{L^2}
 		\Bigg(\,
 			\sum_{q\in\ZZ}
 			2^{2qs}
 			\|\,B_q\|_{L^2}^2
 		\,\Bigg)^{\frac{1}{2}} 
 		\Bigg(\,
 			\sum_{j\in\ZZ}
 			2^{2js}
 			\|\,B_j\|_{L^2}^2
 		\,\Bigg)^{\frac{1}{2}} 
 		\\
 		&\lesssim
 		\|\,\nabla {\rm v}\,\|_{L^2(\RR^2)}\|\,\nabla B\,\|_{\Hh^{s}(\RR^2)}\|\,B\,\|_{\Hh^{s}(\RR^2)},
 		\end{aligned}
	\end{equation} 	 
	It then remains to bound the last term in \eqref{appx-lemma-com-ineq1}, related to the low frequencies of $B$, that is
	\begin{equation*}
		\sum_{j\in \ZZ}2^{2j\Tt}	
		\int_{\RR^2}
		\Dd_j(\Th_{\nabla B}{\rm v}) \cdot B_j		
		\,
		\lesssim
		\,
		\sum_{j\in \ZZ}2^{2j\Tt}
		\sum_{q>j-5}
		\|\,\Dd_j({\rm v}_q\cdot \nabla \Sd_{q+2}B)\,\|_{L^2}
		\|\,B_j\,\|_{L^2}.
	\end{equation*}
	Now, we separately analyze any $j$-term on the right hand sides of the above inequality:
	a standard Bernstein type inequality first leads to the following bound 
	\begin{equation*}
	\begin{aligned}
		\|\,\Dd_j({\rm v}_q \nabla \Sd_{q+2}\nabla B)\,\|_{L^2}
		\|\,B_j\,\|_{L^2}
		\,\lesssim\,
		2^j\|\,\Dd_j({\rm v}_q\cdot  \Sd_{q+2}\nabla B)\,\|_{L^1}
		\|\,B_j\,\|_{L^2}
		\,\lesssim\,
		\|\,{\rm v}_q\,\|_{L^2}
		\|\,\Sd_{q+2}\nabla B\,\|_{L^2}
		\|\,\nabla B_j\,\|_{L^2}\\
		\lesssim		
		2^{j-q}
		\|\,\nabla {\rm v}_q\,\|_{L^2}
		\|\,\Sd_{q+2}\nabla B\,\|_{L^2}
		\|\, B_j\,\|_{L^2}
		\,\lesssim\,
		\|\,\nabla {\rm v}_q\,\|_{L^2}
		\|\,\Sd_{q+2}\nabla B\,\|_{L^2}
		\|\, B_j\,\|_{L^2},
	\end{aligned}
	\end{equation*}
	from which we deduce 
	\begin{equation}\label{appx-lemma-ineq2}
		\begin{aligned}
			\sum_{j\in \ZZ}2^{2j\Tt}
			\sum_{q>j-5}
		&\|\,\Dd_j({\rm v}_q \nabla \Sd_{q+2}\nabla B)\,\|_{L^2}
		\|\,B_j\,\|_{L^2}
		\,\lesssim\\
		&\lesssim
		\|\,\nabla {\rm v}\,\|_{L^2}
		\|\, B\,\|_{\Hh^{\Tt}}
		\|\,\nabla B\,\|_{\Hh^{\Tt}}
		\sum_{j\in \ZZ}
		\sum_{q>j-5}
		2^{(j-q)(\Tt+1)}
		z_q(t),		
		\end{aligned}
	\end{equation}
	Here the sequence $(z_q(t))_{q\in\ZZ}$ is a priori in $l^1(\ZZ)$, being defined by means of
	\begin{equation*}
		z_q(t)\,:=\,2^{2q\Tt}\frac{\|\, B_q(t)\,\|_{L^2}\|\,\nabla B_q(t)\,\|_{L^2}}{\|\, B(t)\,\|_{\Hh^{\Tt}}
		\|\,\nabla B(t)\,\|_{\Hh^{\Tt}}}
		\,\Rightarrow\,
		\|\,(z_q(t))_{q\in\ZZ}\,\|_{l^1(\ZZ)}\,\leq\, 1.
	\end{equation*}
	Secondly we apply a Young-type inequality between convolution of sequences as follows:
	\begin{equation*}
		\Bigg\|
				\sum_{j\in \ZZ}
				\sum_{q>j-5}2^{(j-q)(\Tt+1)}z_q(t)
		\Bigg\|_{\ell^1(\ZZ)}
		\,\leq\,
		\Bigg(
			\sum_{k<5}
			2^{k(\Tt+1)}
		\Bigg)
		\|\,(z_q(t))_{q\in\ZZ}\,\|_{l^1(\ZZ)}
		\,\leq\,C,
	\end{equation*}
	where $C$ is a positive constant $C$ which depends only on $\Tt$. Hence, we couple this estimate together with 
	\eqref{appx-lemma-ineq2}, to eventually gather
	\begin{equation}\label{appx-lemm-comm-ineq3}
		\begin{aligned}
			\sum_{j\in \ZZ}2^{2j\Tt}
			\sum_{q>j-5}
		\|\,\Dd_j({\rm v}_q \nabla \Sd_{q+2}\nabla B)\,\|_{L^2}
		\|\,B_j\,\|_{L^2}
		\,\lesssim\,
		\|\,\nabla {\rm v}\,\|_{L^2}
		\|\, B\,\|_{\Hh^{\Tt}}
		\|\,\nabla B\,\|_{\Hh^{\Tt}}.
		\end{aligned}
	\end{equation}
	In order to conclude the proof of the Lemma, we summarize inequalities \eqref{appx-lemma-ineq4}, \eqref{appx-lemm-comm-ineq3} and identity 
	\eqref{appx-lemma-eq2} into \eqref{appx-lemma-com-ineq1}, to finally achieve
	\begin{equation*}
		\sum_{j\in\ZZ}2^{2j\Tt}
 		\int_{\RR^2}\Dd_j({\rm v}\cdot \nabla B)\cdot B_j
 		\,\lesssim\,
 		|\,\nabla {\rm v}\,\|_{L^2}
		\|\, B\,\|_{\Hh^{\Tt}}
		\|\,\nabla B\,\|_{\Hh^{\Tt}},
	\end{equation*}
	which corresponds to the desired inequality. 
\end{proof}

\subsection{Proof of \eqref{eq:Lorentz_estimate_higher_Sobolev}}\label{sec:tec_est_Sobolev}

In order to prove the estimate \eqref{eq:Lorentz_estimate_higher_Sobolev} we will require the following auxiliary estimate

\begin{lemma}\label{lem:tec_est_for_tec_est}
The following estimate holds true
\begin{equation*}
\norm{\Dd_j \pare{M\cdot \nabla H}}_{L^{4/3}} \lesssim c_j \ 2^{-js} \pare{\norm{M}_{L^2}^{1/2} \norm{ \nabla M}_{L^2}^{1/2}\norm{\nabla H}_{\Hh^s} + \norm{M}_{\Hh^s}^{1/2} \norm{ \nabla M}_{\Hh^s}^{1/2}\norm{\nabla H}_{L^2}}, 
\end{equation*}
for some sequence $ \pare{c_j}_j=\pare{c_j\pare{M, H}}_j\in \ell^2\pare{\ZZ} $. 
\end{lemma}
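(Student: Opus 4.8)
The plan is to feed $M\cdot\nabla H$ into Bony's decomposition, $M\cdot\nabla H=\Th_M\cdot\nabla H+\Th'_{\nabla H}M$, localize in frequency with $\Dd_j$, and estimate the two resulting pieces separately in $L^{4/3}(\RR^2)$ using H\"older's inequality together with the two‑dimensional Gagliardo--Nirenberg (Ladyzhenskaya) inequality $\norm{f}_{L^4(\RR^2)}\lesssim\norm{f}_{L^2}^{1/2}\norm{\nabla f}_{L^2}^{1/2}$ and the dyadic bound \eqref{eq:regularity_dyadic}. The paraproduct piece will produce the first term on the right‑hand side of the statement, and the low‑high--plus--remainder piece $\Th'_{\nabla H}M$ the second.

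For the paraproduct I would write $\Dd_j(\Th_M\cdot\nabla H)=\sum_{|q-j|\le 4}\Dd_j(\Sd_{q-1}M\cdot\nabla\Dd_q H)$ and bound each summand by $\norm{\Sd_{q-1}M}_{L^4}\norm{\nabla\Dd_q H}_{L^2}$. Since the low‑pass filter $\Sd_{q-1}$ is bounded on $L^4$ uniformly in $q$, Gagliardo--Nirenberg gives $\norm{\Sd_{q-1}M}_{L^4}\lesssim\norm{M}_{L^2}^{1/2}\norm{\nabla M}_{L^2}^{1/2}$, while \eqref{eq:regularity_dyadic} applied to $\nabla H$ gives $\norm{\nabla\Dd_q H}_{L^2}\lesssim d_q\,2^{-qs}\norm{\nabla H}_{\Hh^s}$ with $(d_q)\in\ell^2(\ZZ)$; because only finitely many $q$ (with $|q-j|\le 4$) contribute and $2^{-qs}\simeq 2^{-js}$ there, summing yields $c_j\,2^{-js}\norm{M}_{L^2}^{1/2}\norm{\nabla M}_{L^2}^{1/2}\norm{\nabla H}_{\Hh^s}$ for a square‑summable sequence $c_j$.

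For the term $\Th'_{\nabla H}M=\sum_q\Dd_q M\cdot\nabla\Sd_{q+1}H$, the frequency supports force $\Dd_j(\Th'_{\nabla H}M)=\sum_{q\ge j-N_0}\Dd_j(\Dd_q M\cdot\nabla\Sd_{q+1}H)$ for a fixed $N_0$. Each summand is controlled by $\norm{\Dd_q M}_{L^4}\norm{\nabla\Sd_{q+1}H}_{L^2}\lesssim\norm{\Dd_q M}_{L^4}\norm{\nabla H}_{L^2}$, and Bernstein's inequality in dimension two gives $\norm{\Dd_q M}_{L^4}\lesssim 2^{q/2}\norm{\Dd_q M}_{L^2}\lesssim\norm{\Dd_q M}_{L^2}^{1/2}\norm{\Dd_q\nabla M}_{L^2}^{1/2}$ (using $\norm{\nabla\Dd_q M}_{L^2}\simeq 2^q\norm{\Dd_q M}_{L^2}$); then \eqref{eq:regularity_dyadic} applied to $M$ and to $\nabla M$ yields $\norm{\Dd_q M}_{L^4}\lesssim (a_q b_q)^{1/2}\,2^{-qs}\norm{M}_{\Hh^s}^{1/2}\norm{\nabla M}_{\Hh^s}^{1/2}$ with $(a_q),(b_q)$ unit‑norm in $\ell^2(\ZZ)$. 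Since $s>0$, the tail sum $\sum_{q\ge j-N_0}2^{-qs}(a_q b_q)^{1/2}$ is $2^{-js}$ times the convolution of the $\ell^2$ sequence $((a_q b_q)^{1/2})_q$ --- which lies in $\ell^2$ by Cauchy--Schwarz --- with the $\ell^1$ sequence $(2^{-ms}\mathbf 1_{\{m\ge -N_0\}})_m$, so Young's inequality for sequences delivers a square‑summable $c_j$, and this piece contributes $c_j\,2^{-js}\norm{M}_{\Hh^s}^{1/2}\norm{\nabla M}_{\Hh^s}^{1/2}\norm{\nabla H}_{L^2}$. Summing the two bounds and relabeling the $\ell^2$ sequences into a single $c_j$ gives the claim.

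The only genuinely delicate point is the bookkeeping in the $\Th'$ term: one must split a single $L^4$ dyadic norm into the geometric mean of the $L^2$ norms of $\Dd_q M$ and $\Dd_q\nabla M$ so that the two distinct homogeneous norms $\norm{M}_{\Hh^s}$ and $\norm{\nabla M}_{\Hh^s}$ appear with exactly the half‑powers demanded by the statement, and then check that the product of the two associated $\ell^2$ sequences survives Cauchy--Schwarz and the summation over $q\ge j-N_0$ --- which is precisely where the hypothesis $s>0$ enters. Everything else is routine Littlewood--Paley bookkeeping.
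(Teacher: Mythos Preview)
Your proposal is correct and follows essentially the same approach as the paper's own proof: Bony decomposition into $\Th_M\cdot\nabla H+\Th'_{\nabla H}M$, H\"older plus Gagliardo--Nirenberg on the paraproduct piece, and for the $\Th'$ piece a Bernstein/interpolation estimate on $\norm{\Dd_q M}_{L^4}$ followed by the $\ell^1$--$\ell^2$ convolution argument (which, as you note, is exactly where $s>0$ is used). The paper is terser on the remainder term --- it writes ``similar computations'' and jumps to the convolution bound --- but your more explicit bookkeeping, in particular the splitting $\norm{\Dd_q M}_{L^4}\lesssim\norm{\Dd_q M}_{L^2}^{1/2}\norm{\Dd_q\nabla M}_{L^2}^{1/2}$, is precisely what is needed to produce the half-powers of $\norm{M}_{\Hh^s}$ and $\norm{\nabla M}_{\Hh^s}$.
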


\begin{proof}
Using Bony paraproduct decomposition as in \eqref{Bony2} we can say that
\begin{equation*}
\Dd_j \pare{M\cdot \nabla H} = \Dd_j \Th_M \nabla H + \Dd_j \Th' _{\nabla H} M, 
\end{equation*}
where thanks to the almost-orthogonality properties of the dyadic blocks (cf. \cite[Chapter 2]{BCD}) we can say that
\begin{align*}
\Dd_j \Th_M \nabla H & = \sum_{\av{j-q}\leq 4} \Dd_j \pare{ \Sd_{q-1} M \  \Dd_q\nabla H}, \\
\Dd_j \Th' _{\nabla H} M & = \sum_{q>j-4} \Dd_j \pare{ \Dd_q M \  \Sd_{q+2}\nabla H }. 
\end{align*}
We can use H\"older inequality, Gagliardo-Nirenberg inequality and \eqref{eq:regularity_dyadic} in order to deduce that
\begin{equation*}
\begin{aligned}
\norm{\Dd_j\Th_M\nabla H}_{L^{4/3}} & \lesssim \norm{\Sd_{q-1} M}_{L^4} \norm{\Dd_q \nabla H}_{L^2}, \\
&\lesssim c_j \norm{M}_{L^2}^{1/2}\norm{\nabla M}_{L^2}^{1/2} \norm{\nabla H}_{\Hh^s}. 
\end{aligned}
\end{equation*}
In the above estimate we used repeatedly the fact that for the term $ \Dd_j\Th_M\nabla H $ the summation indexes $ j $ and $ q $ differ at most for a uniform and constant value, hence we can interchange them at the prize of a multiplication for a constant. \\

\noindent
Similar computations  allows us to deduce the bound
\begin{align*}
\norm{\Dd_j \Th' _{\nabla H} M}_{L^{4/3}} \lesssim 2^{-js} \norm{M}_{\Hh^s}^{1/2} \norm{ \nabla M}_{\Hh^s}^{1/2}\norm{\nabla H}_{L^2} \sum_{q>j-4}2^{\pare{j-q}s}\tilde{c}_q,
\end{align*}
but we remark that
\begin{equation*}
\pare{\sum_{q>j-4}2^{\pare{j-q}s}\tilde{c}_q}_{j\in\ZZ} = \pare{\pare{2^{ps}1_{p<4}}\star \tilde{c}_p\big. }_{j\in\ZZ} = \pare{c_j}_{j\in\ZZ}\in\ell^2\pare{\ZZ}, 
\end{equation*}
concluding the proof. 
\end{proof}

With the result of Lemma \ref{lem:tec_est_for_tec_est} the proof of \eqref{eq:Lorentz_estimate_higher_Sobolev} is immediate. Let us write
\begin{equation}
\label{eq:Lorentz_technical_est_1}
\begin{aligned}
\angles{M\cdot\nabla H, \ \uu}_{\Hh^s} & \lesssim \sum_j 2^{2js}\angles{\Dd_j\pare{M\cdot\nabla H}, \ \Dd_j \uu}_{L^2}, \\
& \lesssim \sum_j 2^{2js} \norm{\Dd_j\pare{M\cdot\nabla H}}_{L^{4/3}}\norm{\Dd_j\uu}_{L^4}, \\
& \lesssim \pare{\norm{M}_{L^2}^{1/2} \norm{ \nabla M}_{L^2}^{1/2}\norm{\nabla H}_{\Hh^s} + \norm{M}_{\Hh^s}^{1/2} \norm{ \nabla M}_{\Hh^s}^{1/2}\norm{\nabla H}_{L^2}}\norm{\uu}_{\Hh^s}^{1/2}\norm{\nabla\uu}_{\Hh^s}^{1/2}. 
\end{aligned}
\end{equation}

\noindent We rely now on the following technical result, for a proof of which we refer to \cite[Lemma 3.3]{Stefano2}; 
\begin{lemma}\label{lem:reg_H}
Let us fix a $ s\in\bR $ and let $ M, \cG_F $ be such that $ { M,  \cG_F\in \Hh^s} $, then there exists a positive constant $ C $ such that
\begin{equation*}
\begin{aligned}
\norm{  H}_{\Hh^s} & \leqslant C \pare{\norm{ M}_{\Hh^s} + \norm{ \cG_F}_{\Hh^s}}
,
\end{aligned}
\end{equation*}
where $ \cG_F =\nabla\Delta^{-1}F $. 
\end{lemma}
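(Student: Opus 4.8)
The plan is to observe that the lemma is not really about the coupled system at all: once $M$ and $F$ are prescribed, the magnetostatic pair $\Div(H+M)=F$, $\curl H=0$ determines $H$ \emph{explicitly}, and the asserted bound is merely the $\Hh^s$-boundedness of a homogeneous Fourier multiplier of order zero.

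First I would recover the representation formula already implicit in \eqref{eq:def_Hn} and used in the proof of Lemma~\ref{lemma-ineq-Hxi}. Since $\curl H=0$ we write $H=\nabla\phi$; the identity $\Div H=F-\Div M$ then reads $\Delta\phi=F-\Div M$, so that $\phi=\Delta^{-1}F-\Delta^{-1}\Div M$ and consequently
\[
 H\,=\,\nabla\Delta^{-1}F\,-\,\nabla\Delta^{-1}\Div M\,=\,\cG_F\,-\,\cQ M,\qquad \cQ:=\Delta^{-1}\nabla\Div .
\]
In Fourier variables this is exactly $\hat H(\xi)=-\dfrac{\xi\otimes\xi}{|\xi|^2}\,\hat M(\xi)+\widehat{\cG_F}(\xi)$.

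Next I would record the only analytic input needed: for every $\xi\neq 0$ the matrix symbol $\xi\otimes\xi/|\xi|^2$ is an orthogonal projection, hence of operator norm $\le 1$, so $|\widehat{\cQ M}(\xi)|\le|\hat M(\xi)|$ pointwise. Multiplying by $|\xi|^{s}$, squaring, integrating in $\xi$, and using $(a+b)^2\le 2a^2+2b^2$ yields
\[
 \|H\|_{\Hh^s}^2\,\le\,2\,\|\cQ M\|_{\Hh^s}^2+2\,\|\cG_F\|_{\Hh^s}^2\,\le\,2\,\|M\|_{\Hh^s}^2+2\,\|\cG_F\|_{\Hh^s}^2,
\]
which is the claim with $C=\sqrt2$. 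Equivalently, in the Littlewood–Paley formalism used elsewhere in the paper one writes $\Dd_j H=-\Dd_j\cQ M+\Dd_j\cG_F$, notes that $\cQ$ commutes with the dyadic blocks and satisfies $\|\Dd_j\cQ f\|_{L^2}\lesssim\|\Dd_j f\|_{L^2}$, and sums $2^{2js}$ times the square of this identity over $j\in\ZZ$ via the isomorphism $\Hh^s\simeq\BB^s_{2,2}$.

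The inequality itself is immediate; the single point that genuinely requires care — and the reason the statement is quoted from \cite{Stefano2} rather than reproved — is the choice of functional framework in which $\Delta^{-1}$, the potential $\phi$, and the homogeneous space $\Hh^s(\RR^2)$ are well defined (in dimension two one must work modulo polynomials, with the usual low-frequency restriction when $s\ge 1$, and justify the reconstruction $H=\nabla\phi$ in that setting). I expect this bookkeeping about the definition of the homogeneous spaces, rather than the estimate, to be the only real obstacle.
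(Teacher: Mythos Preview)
Your proposal is correct and is exactly the natural argument: the paper does not actually prove this lemma but only quotes it from \cite[Lemma~3.3]{Stefano2}, and the Fourier-multiplier computation you give (write $H=\cG_F-\cQ M$ and use that $\xi\otimes\xi/|\xi|^2$ has operator norm at most $1$) is the same representation the paper itself invokes in \eqref{eq:def_Hn} and in the proof of Lemma~\ref{lemma-ineq-Hxi}. Your closing caveat about the meaning of $\Delta^{-1}$ and of $\Hh^s(\RR^2)$ for $s\ge 1$ is well placed, but for the pointwise Fourier inequality and the resulting $\Hh^s$ bound nothing further is needed.
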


\noindent Inserting the result of Lemma \ref{lem:reg_H} in the estimate \eqref{eq:Lorentz_technical_est_1} and using repeatedly a Young convexity inequality of the form 
\begin{align*}
ab\leq \frac{\varepsilon a^p}{p} + \frac{C\ b^q}{\varepsilon q}, && \frac{1}{p}+\frac{1}{q}=1,
\end{align*} 
 we deduce the following inequality for any $ \varepsilon >0 $
 \begin{multline*}
 \angles{M\cdot\nabla H, \ \uu}_{\Hh^s} \leq \varepsilon \pare{ \norm{\nabla \uu}_{\Hh^s}^2 + \norm{\nabla M}_{\Hh^s}^2 }\\
  + \frac{C}{\varepsilon} \bra{ \Big. 
 \pare{1+\norm{M}_{L^2}^2}\norm{\nabla M}_{L^2}^2 + \norm{\nabla \cG_F}_{L^2}^2 
 }\pare{ \norm{ u}_{\Hh^s}^2 + \norm{ M}_{\Hh^s}^2 } + \norm{\nabla\cG_F}_{\Hh^s}^2,
 \end{multline*}
 concluding the proof of estimate \eqref{eq:Lorentz_estimate_higher_Sobolev}.

 \subsection{Proof of \eqref{eq:est_Hs_MxH}} \label{sec:est_MxH}
 
 In a fashion analogous of what was done above we need first the following technical result; 
 
 \begin{lemma}\label{lem:tec_est_for_MxH}
 The following estimate hold true
 \begin{equation*}
 \norm{\Dd_j \pare{ M\times H}}_{L^2}\lesssim c_j \ 2^{-js} \pare{
 \norm{M}_{L^2}^{1/2} \norm{ \nabla M}_{L^2}^{1/2} \norm{H}_{\Hh^s}^{1/2} \norm{ \nabla H}_{\Hh^s}^{1/2} + 
 \norm{H}_{L^2}^{1/2} \norm{ \nabla H}_{L^2}^{1/2} \norm{M}_{\Hh^s}^{1/2} \norm{ \nabla M}_{\Hh^s}^{1/2}
 }
 \end{equation*}
 \end{lemma}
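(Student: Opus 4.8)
The plan is to mimic the structure of the proof of Lemma~\ref{lem:tec_est_for_tec_est}, replacing the role of $\nabla H$ by $H$ and distributing the gradient evenly between the two factors. First I would apply the Bony decomposition to the (scalar-valued) product $M\times H$, writing $\Dd_j(M\times H)=\Dd_j\T_M H+\Dd_j\T'_H M$, and then use the almost-orthogonality of the dyadic blocks to reduce each piece to a localized sum:
\begin{equation*}
\Dd_j\T_M H=\sum_{|j-q|\leq 4}\Dd_j\bigl(\Sd_{q-1}M\,\Dd_q H\bigr),\qquad
\Dd_j\T'_H M=\sum_{q>j-4}\Dd_j\bigl(\Dd_q M\,\Sd_{q+2}H\bigr).
\end{equation*}

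For the paraproduct term $\Dd_j\T_M H$, the indices $j$ and $q$ are comparable, so I would estimate $\|\Sd_{q-1}M\,\Dd_q H\|_{L^2}\lesssim\|\Sd_{q-1}M\|_{L^4}\|\Dd_q H\|_{L^4}$ by H\"older, then use Gagliardo--Nirenberg in the form $\|f\|_{L^4}\lesssim\|f\|_{L^2}^{1/2}\|\nabla f\|_{L^2}^{1/2}$ on the first factor and, on the dyadic piece $\Dd_q H$, a Bernstein-type bound $\|\Dd_q H\|_{L^4}\lesssim\|\Dd_q H\|_{L^2}^{1/2}\|\nabla\Dd_q H\|_{L^2}^{1/2}$ together with \eqref{eq:regularity_dyadic}; since $j\sim q$ this contributes $c_j\,2^{-js}\,\|M\|_{L^2}^{1/2}\|\nabla M\|_{L^2}^{1/2}\|H\|_{\Hh^s}^{1/2}\|\nabla H\|_{\Hh^s}^{1/2}$ after absorbing the geometric sum over $|j-q|\leq 4$. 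For the remainder term $\Dd_j\T'_H M$, I would instead put the high-frequency factor $\Dd_q M$ in $L^2$ with a square-summable coefficient via \eqref{eq:regularity_dyadic}, estimate $\|\Sd_{q+2}H\|_{L^\infty}$ (or rather keep an $L^2\times L^2$ pairing after a Bernstein gain, exactly as in Lemma~\ref{comm-est:appendix}), and split the derivative so as to produce $\|H\|_{\Hh^s}^{1/2}\|\nabla H\|_{\Hh^s}^{1/2}$ paired against $\|M\|_{L^2}^{1/2}\|\nabla M\|_{L^2}^{1/2}$; the leftover sum $\sum_{q>j-4}2^{(j-q)s}\tilde c_q$ is a convolution of a summable geometric sequence (valid since $s>0$) with an $\ell^2$ sequence, hence again $\ell^2$ in $j$.

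The main obstacle I anticipate is bookkeeping the derivative distribution so that the \emph{symmetric} form of the bound --- with $M$ and $H$ playing interchangeable roles --- actually comes out, rather than a lopsided estimate. Concretely, in the remainder piece one must avoid losing a full derivative on $\Sd_{q+2}H$; the standard trick (used in Lemma~\ref{comm-est:appendix}) is to move one derivative off $\nabla\Sd_{q+2}B$ onto the outer block $\Dd_j$ via a Bernstein inequality, but here there is no explicit gradient, so I would instead interpolate $\|\Sd_{q+2}H\|_{L^\infty}\lesssim\|\Sd_{q+2}H\|_{L^2}^{1/2}\|\nabla\Sd_{q+2}H\|_{L^2}^{1/2}$ (a low-frequency Bernstein/Gagliardo--Nirenberg bound in $\RR^2$) and then bound $\|\nabla\Sd_{q+2}H\|_{L^2}$ and $\|\Sd_{q+2}H\|_{L^2}$ by the corresponding full norms, being slightly careful that the low-frequency truncation of $\nabla H$ is controlled by $\|\nabla H\|_{L^2}$. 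Once the $L^2$ bound on $\Dd_j(M\times H)$ is established it feeds directly into the $\Hh^s$ inner product $\langle M\times H,\omega\rangle_{\Hh^s}=\sum_j 2^{2js}\langle\Dd_j(M\times H),\omega_j\rangle_{L^2}$, and Lemma~\ref{lem:reg_H} together with repeated Young inequalities yields \eqref{eq:est_Hs_MxH}; that final assembly is routine and mirrors the deduction of \eqref{eq:Lorentz_estimate_higher_Sobolev} from Lemma~\ref{lem:tec_est_for_tec_est}.
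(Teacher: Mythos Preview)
Your decomposition and the treatment of the paraproduct piece $\Dd_j\Th_M H$ are correct and match the paper exactly: H\"older into $L^4\times L^4$, then Gagliardo--Nirenberg on $\Sd_{q-1}M$ and on $\Dd_q H$, combined with \eqref{eq:regularity_dyadic}.

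There is, however, a genuine gap in your treatment of the remainder $\Dd_j\Th'_H M$. The interpolation you propose,
\[
\|\Sd_{q+2}H\|_{L^\infty}\lesssim \|\Sd_{q+2}H\|_{L^2}^{1/2}\|\nabla\Sd_{q+2}H\|_{L^2}^{1/2},
\]
is the one-dimensional Gagliardo--Nirenberg inequality and is \emph{false} in $\RR^2$, where $H^1\not\hookrightarrow L^\infty$; a function localized at frequency $\sim 2^q$ saturates $\|\cdot\|_{L^\infty}\sim 2^q\|\cdot\|_{L^2}$ while the right-hand side scales like $2^{q/2}\|\cdot\|_{L^2}$. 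Your fallback ``$L^2\times L^2$ after a Bernstein gain'' does not close either, because the factor $2^j$ coming from $\|\Dd_j(\cdot)\|_{L^2}\lesssim 2^j\|\cdot\|_{L^1}$ cannot be compensated without further structure. You also have the roles of $M$ and $H$ swapped in your stated target: in $\Th'_H M$ the high-frequency factor is $\Dd_q M$, so it is $M$ that must carry the $\Hh^s$ norms and $H$ the $L^2$ ones.

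The paper's fix is much simpler than what you attempt: it uses the \emph{same} $L^4\times L^4$ splitting for the remainder,
\[
\|\Dd_q M\,\Sd_{q+2}H\|_{L^2}\le \|\Dd_q M\|_{L^4}\|\Sd_{q+2}H\|_{L^4},
\]
applies the (valid in $\RR^2$) interpolation $\|f\|_{L^4}\lesssim\|f\|_{L^2}^{1/2}\|\nabla f\|_{L^2}^{1/2}$ to both factors, bounds $\|\Sd_{q+2}H\|_{L^4}\lesssim\|H\|_{L^2}^{1/2}\|\nabla H\|_{L^2}^{1/2}$ and $\|\Dd_q M\|_{L^4}\lesssim \tilde c_q\,2^{-qs}\|M\|_{\Hh^s}^{1/2}\|\nabla M\|_{\Hh^s}^{1/2}$ via \eqref{eq:regularity_dyadic}, and then runs the convolution argument $\sum_{q>j-4}2^{(j-q)s}\tilde c_q\in\ell^2$ that you already identified.
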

 
 \begin{proof}
 As in the proof of Lemma \ref{lem:tec_est_for_tec_est} we can use Bony paraproduct decomposition in order so that
 \begin{equation*}
 \Dd_j \pare{M\times H} = \Dd_j \Th _{M} H + \Dd_j \Th' _{H} M.  
 \end{equation*}
 Whence using the interpolation inequality $ \norm{f}_{L^4}\lesssim \norm{f}_{L^2}^{1/2} \norm{ \nabla f}_{L^2}^{1/2} $ and \eqref{eq:regularity_dyadic}; 
 \begin{align*}
 \norm{\Dd_j \Th _{M} H}_{L^2} & \leq \sum_{\av{j-q}\leq 4} \norm{\Sd_{q} M}_{L^4} \norm{\Dd_q H}_{L^4}, \\
 &  \lesssim c_j \ 2^{-js} 
 \norm{M}_{L^2}^{1/2} \norm{ \nabla M}_{L^2}^{1/2} \norm{H}_{\Hh^s}^{1/2} \norm{ \nabla H}_{\Hh^s}^{1/2}. 
 \end{align*}
 Similar computations lead to the bound
 \begin{equation*}
 \norm{\Dd_j \Th' _{H} M}_{L^2}\lesssim c_j \ 2^{-js} \norm{H}_{L^2}^{1/2} \norm{ \nabla H}_{L^2}^{1/2} \norm{M}_{\Hh^s}^{1/2} \norm{ \nabla M}_{\Hh^s}^{1/2}. 
 \end{equation*}
 \end{proof}

 We can now use the result of Lemma \ref{lem:tec_est_for_MxH} in order to conclude the proof of \eqref{eq:est_Hs_MxH}. Using the result of Lemma \ref{lem:tec_est_for_MxH} and \eqref{eq:regularity_dyadic} we can in fact argue that
 \begin{multline}\label{eq:tecest0}
 \int \Dd_j\pare{M\times H}\cdot \Dd_j\omega \  \dx
\\
\begin{aligned}
\lesssim & \ \norm{\Dd_j\pare{M\times H}}_{L^2}\norm{\Dd_j \omega}_{L^2}, \\
\lesssim & \  b_j \ 2^{-2js} \pare{
 \norm{M}_{L^2}^{1/2} \norm{ \nabla M}_{L^2}^{1/2} \norm{H}_{\Hh^s}^{1/2} \norm{ \nabla H}_{\Hh^s}^{1/2} + 
 \norm{H}_{L^2}^{1/2} \norm{ \nabla H}_{L^2}^{1/2} \norm{M}_{\Hh^s}^{1/2} \norm{ \nabla M}_{\Hh^s}^{1/2}
 } \norm{\omega}_{\Hh^s}. 
\end{aligned}
 \end{multline}
 
 We can hence use a Young convexity inequality and Lemma \ref{lem:reg_H} in order to deduce the estimate
 \begin{multline}\label{eq:tecest1}
 \norm{M}_{L^2}^{1/2} \norm{ \nabla M}_{L^2}^{1/2} \norm{H}_{\Hh^s}^{1/2} \norm{ \nabla H}_{\Hh^s}^{1/2}\norm{\omega}_{\Hh^s} \\
 \begin{aligned}
 \lesssim & \ \frac{\varepsilon}{2}\norm{\omega}_{\Hh^s}^2 + \frac{\varepsilon}{2} \norm{\nabla H}_{\Hh^s}^2 + \frac{C}{\varepsilon} \norm{M}_{L^2}^{2} \norm{ \nabla M}_{L^2}^{2} \norm{H}_{\Hh^s}^{2}, \\
 \lesssim & \ \frac{\varepsilon}{2}\norm{\omega}_{\Hh^s}^2 + \frac{\varepsilon}{2} \norm{\nabla M}_{\Hh^s}^2 + \frac{\varepsilon}{2} \norm{\nabla \cG_F}_{\Hh^s}^2 \\
   & \ + \frac{C}{\varepsilon} \norm{M}_{L^2}^{2} \norm{ \nabla M}_{L^2}^{2} \norm{M}_{\Hh^s}^{2} + \frac{C}{\varepsilon} \norm{M}_{L^2}^{2} \norm{ \nabla M}_{L^2}^{2} \norm{\cG_F}_{\Hh^s}^{2}. 
 \end{aligned}
 \end{multline}

 \noindent Similarly we can deduce that
 \begin{equation}\label{eq:tecest2}
 \begin{aligned}
 \norm{H}_{L^2}^{1/2} \norm{ \nabla H}_{L^2}^{1/2} \norm{M}_{\Hh^s}^{1/2} \norm{ \nabla M}_{\Hh^s}^{1/2}\norm{\omega}_{\Hh^s} & \lesssim \frac{\varepsilon}{2}\norm{\omega}_{\Hh^s}^2 + \frac{\varepsilon}{2} \norm{\nabla M}_{\Hh^s}^2 + \frac{C}{\varepsilon} \norm{H}_{L^2}^{2} \norm{ \nabla H}_{L^2}^{2} \norm{M}_{\Hh^s}^{2}. 
 \end{aligned}
 \end{equation}

 \noindent Plugging hence estimate \eqref{eq:tecest1} and \eqref{eq:tecest2} in \eqref{eq:tecest0} and summing in $ j\in\ZZ $ we deduce the bound
 
 \begin{multline*}
 \sum_{j\in\ZZ}2^{2js}\int \Dd_j\pare{M\times H}\cdot \Dd_j\omega \  \dx\\
\begin{aligned}
\lesssim & \  {\varepsilon}\norm{\omega}_{\Hh^s}^2 + {\varepsilon} \norm{\nabla M}_{\Hh^s}^2 + \frac{\varepsilon}{2} \norm{\nabla \cG_F}_{\Hh^s}^2 \\
    & \  + \frac{C}{\varepsilon} \pare{\norm{M}_{L^2}^{2} \norm{ \nabla M}_{L^2}^{2} + \norm{H}_{L^2}^{2} \norm{ \nabla H }_{L^2}^{2}} \norm{M}_{\Hh^s}^{2} + \frac{C}{\varepsilon} \norm{M}_{L^2}^{2} \norm{ \nabla M}_{L^2}^{2} \norm{\cG_F}_{\Hh^s}^{2}, 
\end{aligned}
 \end{multline*}
 concluding the proof of \eqref{eq:est_Hs_MxH}.

\section{Overview of the Littlewood Paley theory}\label{app:LP}
\noindent
This section is devoted to an overview of the Littlewood-Paley theory. We present here some technical tools that have been crucial to our proofs.
For more specifics, we refer the interested reader to \cite{BCD}.

\noindent
We begin with introducing the so called ``Littlewood-Paley decomposition'', which is characterized by an homogeneous partition of unity within the Fourier space
$\RR^\dd_\xi$. 

\noindent
We first take into account a radial function $\chi$ which belongs to ${\mathcal{D}}(B(0,2)) $, being identically 1 in $B(0,1/2)$. We also assume that 
the function
$$r\in\RR_+\,\rightarrow \,\chi(re)$$
is non increasing, for any vector $e$ in $\RR^\dd$. We then introduce the sequence $(\varphi_j)_{j\in\ZZ}$ as 
\begin{equation*}
\varphi_j(\xi)\,: =\,\chi\left(\frac{\xi}{2^{j+1}}\right)\,-\,\chi\left(\frac{\xi}{2^j}\right),
\end{equation*}
which satisfies
\begin{equation*}
|j-q|\,>\,5\,
\Rightarrow\,
{\rm Supp}\,\, \varphi_j\cap {\rm Supp}\,\, \varphi_q\,=\,\emptyset,\quad\text{and}\quad
\sum_{j\in\ZZ}\varphi_j(\xi)\,=\,1\;\;\forall \xi\in\RR^\dd\setminus\{0\}.
\end{equation*}
For any integer $j\in\ZZ$, the homogeneous dyadic block  $\Dd_j$ and the low frequency cut-off operator $\Sd_j$ are then defined by means of
\begin{equation*}
	\Dd_j\,:=\,	\varphi_j(D),\quad\quad\Sd_j\,:=\,\sum_{q\leq j-1}\Dd_q.
\end{equation*}
where throughout we agree  that  $f(D)$ stands for 
the pseudo-differential operator 
$$u\rightarrow\FF^{-1}(f\,\FF_x(u)),$$ for any smooth function $f$. Both $\Dd_j$ and $\Sd_j$ maps the Lebesgue space $L^p(\RR^\dd)$ into itself, for any $j\in\ZZ$ and  $p\in[1,+\infty]$. The norm of these maps are independent of the indexes $j$ and $p$.

\noindent 
\noindent
The following classical property holds true: for any homogeneous tempered distribution $u\in\mycal{S}_h'$, we can decompose $\uu$ as 
$$u\,=\,\sum_{j}\Dd_ju,$$ 
in the sense of $\mycal{S}_h'$.
Formally, for two appropriately tempered distributions $a$ and $b$ we have the so called Bony's paraproduct decomposition
\cite{Bony81}:
\begin{equation}\label{Bony}
	ab\,=\,\Th_a b\,+\,\Th_b a\,+\,\dot{R}(a,b)
\end{equation} 
being $\Th_a b$ the homogeneous paraproduct and $\dot{R}(a,b)$ the homogeneous reminder, i.e.
\begin{equation*}
\Th_a b=\sum_{\substack{j\in\ZZ}}\Sd_{j-1} a\Dd_{j}b,\quad\quad 
\dot{R}(a,b)=\sum_{\substack{j\in\ZZ,\\ i\in\{0,\pm 1\}} }\Dd_{j} a\Dd_{j+i} b.
\end{equation*}
It is common in literature to regroup the second homogeneous paraproduct of \eqref{Bony} into the reminder as follows:
\begin{equation}\label{Bony2}
	ab\,=\,\Th_a b\,+\,\Th_b' a,\quad\quad\text{where}\quad\quad \Th_b' a\,:=\,\sum_{\substack{j\in\ZZ}}\Dd_{j} a\Sd_{j+2}b
\end{equation}

\smallskip\noindent
Let us present the so-called \emph{Bernstein's inequalities}, which explain the way derivatives act on spectrally localized functions.
\begin{lemma} \label{l:bern}
Let  $0<r<R$.   A constant $C$ exists so that, for any nonnegative integer $k$, any couple $(p,q)$ 
in $[1,+\infty]^2$, with  $p\leq q$,  and any function $u\in L^p$,  we  have, for all $\lambda>0$,
$$
\displaylines{
{\rm supp}\, \widehat u \subset   B(0,\lambda R)\quad
\Longrightarrow\quad
\|\nabla^k u\|_{L^q}\, \leq\,
 C^{k+1}\,\lambda^{k+d\left(\frac{1}{p}-\frac{1}{q}\right)}\,\|u\|_{L^p}\;;\cr
{\rm supp}\, \widehat u \subset \{\xi\in\R^d\,|\, r\lambda\leq|\xi|\leq R\lambda\}
\quad\Longrightarrow\quad C^{-k-1}\,\lambda^k\|u\|_{L^p}\,
\leq\,
\|\nabla^k u\|_{L^p}\,
\leq\,
C^{k+1} \, \lambda^k\|u\|_{L^p}\,.
}$$
\end{lemma}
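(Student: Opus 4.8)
The plan is to deduce the whole statement from three elementary building blocks, after first reducing to the normalisation $\lambda=1$ by the dilation $v(x):=u(x/\lambda)$, which sends $\widehat v(\xi)=\lambda^{d}\widehat u(\lambda\xi)$ and hence transports the support condition $\mathrm{supp}\,\widehat u\subset B(0,\lambda R)$ (resp.\ the annulus $\{r\lambda\le|\xi|\le R\lambda\}$) to the corresponding condition with $\lambda=1$. Tracking the powers of $\lambda$ produced by $\|\nabla^{k}v\|_{L^{q}}=\lambda^{-k+d/q}\|\nabla^{k}u\|_{L^{q}}$ and $\|v\|_{L^{p}}=\lambda^{d/p}\|u\|_{L^{p}}$ will then reconstitute the exponents $k+d(1/p-1/q)$ and $k$ at the end. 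The three blocks are: (a) the zeroth--order embedding $\|u\|_{L^{q}}\le C\,\|u\|_{L^{p}}$ for $\widehat u$ supported in a fixed ball; (b) a single first--derivative gain $\|\nabla u\|_{L^{p}}\le C_{1}\|u\|_{L^{p}}$; and (c) an iteration turning (b) into the $k$--th order estimate with a \emph{geometric} constant.

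For (a) I would fix $\theta\in\mathcal{D}(\RR^{d})$ equal to $1$ on $B(0,R)$, so that $\widehat u=\theta\,\widehat u$, i.e.\ $u=\check\theta * u$ with $\check\theta:=\mathcal{F}^{-1}\theta\in\mathcal{S}(\RR^{d})$. Young's convolution inequality with $1/r=1-1/p+1/q\in[0,1]$ (admissible precisely because $p\le q$) gives $\|u\|_{L^{q}}\le\|\check\theta\|_{L^{r}}\|u\|_{L^{p}}$, with a constant independent of $k$. For (b) the same factorisation yields $\partial_{j}u=(\partial_{j}\check\theta)*u$, whence $\|\partial_{j}u\|_{L^{p}}\le\|\partial_{j}\check\theta\|_{L^{1}}\|u\|_{L^{p}}$ and thus $\|\nabla u\|_{L^{p}}\le C_{1}\|u\|_{L^{p}}$. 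The crucial observation for (c) is that multiplication by $i\xi_{j}$ on the Fourier side does \emph{not} enlarge the spectral support; hence every component of $\nabla^{k-1}u$ again has Fourier transform supported in $B(0,R)$, and (b) applies to it. Iterating $k$ times gives $\|\nabla^{k}u\|_{L^{p}}\le C_{1}^{k}\|u\|_{L^{p}}$ at $\lambda=1$; combining this with (a) applied to $\nabla^{k}u$ (still spectrally supported in $B(0,R)$) and undoing the dilation produces the first inequality with constant $C^{k+1}\lambda^{k+d(1/p-1/q)}$.

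For the second inequality the upper bound is exactly the first inequality specialised to $q=p$ (the annulus sits inside $B(0,\lambda R)$), so only the lower bound requires work, and there I would exploit that $\widehat u$ is supported away from the origin. Choosing $\psi\in\mathcal{D}(\RR^{d}\setminus\{0\})$ with $\psi\equiv1$ on $\{r\le|\xi|\le R\}$ and writing $\widehat u=\sum_{j}m_{j}(\xi)\psi(\xi)\,\widehat{\partial_{j}u}$ with $m_{j}(\xi)=c\,\xi_{j}/|\xi|^{2}$ for a suitable constant $c$ (so that $\sum_{j}m_{j}(\xi)\,2\pi i\xi_{j}=1$ on $\mathrm{supp}\,\psi$), I get a symbol $m_{j}\psi\in\mathcal{D}(\RR^{d}\setminus\{0\})$, homogeneous of degree $-1$ on the support of $\widehat u$, whose inverse transform is Schwartz. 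Young's inequality with this $L^{1}$ kernel then yields the $k=1$ lower bound $\|u\|_{L^{p}}\le C_{1}'\|\nabla u\|_{L^{p}}$ at $\lambda=1$. As in (c), and because multiplication by $i\xi_{j}$ preserves the annular support, this iterates to $\|u\|_{L^{p}}\le (C_{1}')^{k}\|\nabla^{k}u\|_{L^{p}}$; the degree $-1$ homogeneity of the $m_{j}$ supplies the factor $\lambda^{-k}$ upon rescaling, giving $C^{-k-1}\lambda^{k}\|u\|_{L^{p}}\le\|\nabla^{k}u\|_{L^{p}}$.

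The step I expect to be the genuine obstacle is obtaining the \emph{geometric} dependence $C^{k+1}$ rather than a factorially growing constant: a brute--force estimate of $\|\partial^{\alpha}\check\theta\|_{L^{1}}$ summed over all $|\alpha|=k$ would blow up like $k!$. The remedy, around which the plan above is built, is never to differentiate more than once per step and to iterate the $k=1$ estimates; this is legitimate precisely because both the ball-- and the annulus--support hypotheses are invariant under multiplication by $i\xi_{j}$, so each intermediate tensor component retains the spectral localisation needed to reapply (b). The remaining bookkeeping---the admissibility $1/r\in[0,1]$ in Young's inequality, the exact powers of $\lambda$ from the dilations, and the harmless $2\pi$ factors coming from the normalisation $\mathcal{F}_{x}g(\xi)=\int g(x)e^{-2\pi i\xi\cdot x}\,dx$ fixed earlier---is routine.
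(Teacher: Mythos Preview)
Your argument is correct and is essentially the standard proof of Bernstein's inequalities as found, for instance, in \cite{BCD}. Note, however, that the paper itself does not supply a proof of this lemma: it is stated in the appendix as a classical tool of Littlewood--Paley theory, with an implicit reference to \cite{BCD} for details. There is therefore no ``paper's own proof'' to compare against; your sketch simply fills in what the authors took for granted, and it does so along the conventional lines (scaling reduction to $\lambda=1$, convolution with a Schwartz cutoff plus Young's inequality for the upper bounds, a smooth Fourier multiplier supported on the annulus to invert one derivative for the lower bound, and iteration of the $k=1$ case to secure the geometric constant $C^{k+1}$). Your remark about why iteration is preferable to a direct $k$-th order estimate---namely that differentiating the kernel $k$ times in one shot would produce factorial growth---is exactly the point that makes the constant in the statement sharp in its $k$-dependence.
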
   
\noindent
We are now in the position to define the class of homogeneous Besov spaces.
\begin{definition} \label{d:B}
  Let $s\in\R$ and $1\leq p,r\leq+\infty$. The \emph{homogeneous Besov space}
$\BB^{s}_{p,r}\,=\,\BB^{s}_{p,r}(\RR^\dd)$ is defined as the subset of homogeneous tempered distributions $u$ in $\mycal{S}'_h$ for which
the following norm is bounded
\begin{equation*}
	\|\,u\,\|_{\BB^{s}_{p,r}}\,:=\,
	\left\|\,\left(\,2^{js}\,\|\,\Dd_ju\,\|_{L^p}\,\right)_{j\in\ZZ}\,\right\|_{\ell^r}\,<\,+\infty\,.
\end{equation*}
\end{definition}
\noindent
Homogeneous Besov spaces can be understood as interpolation spaces between the Sobolev ones. Furthermore for all $s\in\R$ we have the isomorphism of Banach spaces $\BB^s_{2,2}\cong \Hh^s$, with
\begin{equation} \label{eq:LP-Sob}
\|\,f\,\|_{\Hh^s}\,\sim\,\left(\sum_{j\in\ZZ}2^{2 j s}\,\|\,\Dd_jf\,\|^2_{L^2}\right)^{1/2}\,.
\end{equation}
Indeed, the previous isomorphism is an isomorphism between Hilbert spaces: the $\BB^s_{2,2}$-inner product 
\begin{equation*}
\langle f,\, g\rangle_{\BB_{2,2}^{\s}} := \sum_{j\in\ZZ} 2^{2\,j\,\s}\langle \Dd_j f\,,\, \Dd_j g\rangle_{L^2}\,,
\end{equation*}
is equivalent to the classical one over $\dot H^s$.

\noindent
A consequence of the Bernstein's inequality is the following embedding result.
\begin{prop}\label{p:embed}
The space $\BB^{s_1}_{p_1,r_1}$ is continuously embedded in the space $\BB^{s_2}_{p_2,r_2}$ for all indexes $p_1\,\leq\,p_2$, $r_1\,\leq\,r_2$ and
$$
s_2\,=\,s_1-d\left(\frac{1}{p_1}-\frac{1}{p_2}\right)\,. 
$$
\end{prop}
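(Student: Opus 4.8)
The plan is to reduce the embedding to a single frequency-localized estimate, combining Bernstein's inequality (Lemma \ref{l:bern}), which governs the passage from $L^{p_1}$ to $L^{p_2}$, with the elementary nesting of the sequence spaces $\ell^{r_1}\hra\ell^{r_2}$, which governs the passage from the summability index $r_1$ to $r_2$. The guiding observation is that the frequency gain produced by Bernstein is \emph{exactly} the factor that is absorbed by the shift $s_2=s_1-d(1/p_1-1/p_2)$ in the regularity index, so that no genuine estimate beyond these two standard facts is needed.

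First I would record that, by the very construction of $\varphi_j$, the homogeneous block $\Dd_j u$ has Fourier support contained in the annulus $\{2^{j-1}\le|\xi|\le 2^{j+2}\}$, hence in particular in a ball $B(0,R\,2^{j})$ with $R$ a fixed constant. Since $p_1\le p_2$, applying the ball version of Bernstein's inequality (Lemma \ref{l:bern}) with $k=0$ and $\lambda=2^{j}$ yields a constant $C$ independent of $j$ such that
\begin{equation*}
\|\,\Dd_j u\,\|_{L^{p_2}}\,\le\, C\,2^{j d\left(\frac{1}{p_1}-\frac{1}{p_2}\right)}\,\|\,\Dd_j u\,\|_{L^{p_1}}.
\end{equation*}
Multiplying both sides by $2^{j s_2}$ and invoking the defining relation $s_2=s_1-d(1/p_1-1/p_2)$, the frequency exponents combine into $2^{j s_1}$, so that
\begin{equation*}
2^{j s_2}\,\|\,\Dd_j u\,\|_{L^{p_2}}\,\le\, C\,2^{j s_1}\,\|\,\Dd_j u\,\|_{L^{p_1}}\qquad\text{for every }j\in\ZZ.
\end{equation*}

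It then remains to take sequence norms over $j\in\ZZ$. I would take the $\ell^{r_2}(\ZZ)$ norm of the left-hand side, bounding it (with $C$ uniform in $j$) by $C$ times the $\ell^{r_2}$ norm of the sequence $\big(2^{js_1}\|\Dd_j u\|_{L^{p_1}}\big)_j$, and then use that $r_1\le r_2$ to invoke the inclusion $\ell^{r_1}(\ZZ)\hra\ell^{r_2}(\ZZ)$, with $\|a\|_{\ell^{r_2}}\le\|a\|_{\ell^{r_1}}$. This chain gives
\begin{equation*}
\|\,u\,\|_{\BB^{s_2}_{p_2,r_2}}\,=\,\Big\|\big(2^{j s_2}\|\Dd_j u\|_{L^{p_2}}\big)_{j\in\ZZ}\Big\|_{\ell^{r_2}}\,\le\, C\,\Big\|\big(2^{j s_1}\|\Dd_j u\|_{L^{p_1}}\big)_{j\in\ZZ}\Big\|_{\ell^{r_1}}\,=\,C\,\|\,u\,\|_{\BB^{s_1}_{p_1,r_1}},
\end{equation*}
which is precisely the asserted continuous embedding.

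Given the ingredients already at hand, there is no serious obstacle here: the proof is essentially a bookkeeping of Bernstein's frequency gain. The only two points deserving care are, first, checking that the annular Fourier localization of $\Dd_j$ legitimately permits the uniform-in-$j$ application of the ball version of Bernstein; and second, keeping straight that the monotonicity of the sequence spaces runs \emph{opposite} to that of the Lebesgue spaces, i.e. $\ell^{r_1}\hra\ell^{r_2}$ holds precisely when $r_1\le r_2$, so that the hypothesis $r_1\le r_2$ is used in the correct direction.
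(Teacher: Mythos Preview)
Your proof is correct and follows exactly the approach the paper indicates: the paper does not give a detailed proof but simply states that the embedding is ``a consequence of the Bernstein's inequalities,'' which is precisely what you carry out by combining Lemma \ref{l:bern} with the elementary inclusion $\ell^{r_1}\hra\ell^{r_2}$.
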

\noindent
We finally recall a classical commutator estimate (see e.g. Lemma 2.97 of \cite{BCD}).
\begin{lemma} \label{l:commut}
Let $\Phi\in\Cc^1(\RR^d)$ such that $(1+|\,\cdot\,|)\hat{\Phi}\,\in\,L^1$. There exists a constant $C$ such that,
for any function $h$ for which $\nabla h\in L^{p}(\RR^d)$, for any $f\in L^q(\R^d)$ and for all $\lambda>0$, one has
$$
\left\|\bigl[\Phi(\lambda^{-1}D),h\bigr]f\right\|_{L^r}\,\leq\,C\,\lambda^{-1}\,\left\|\nabla h\right\|_{L^p}\,\|f\|_{L^q}\,,
$$
where $r\in[1,+\infty]$ satisfies the relation $1/r\,=\,1/p\,+\,1/q$.
\end{lemma}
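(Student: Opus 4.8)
The plan is to reduce the commutator to a single convolution integral against a rescaled kernel and then to extract the gain $\lambda^{-1}$ from a first–order Taylor expansion of $h$. First I would record that, with the Fourier normalisation fixed in the body of the paper, the Fourier multiplier $\Phi(\lambda^{-1}D)$ acts as convolution against the rescaled kernel $g_\lambda(x):=\lambda^d g(\lambda x)$, where $g:=\cF^{-1}\Phi$; equivalently $g(\cdot)=\hat\Phi(-\,\cdot)$, so that the hypothesis $(1+|\cdot|)\hat\Phi\in L^1$ translates into the two integrability facts $g\in L^1(\RR^d)$ and $|\cdot|\,g\in L^1(\RR^d)$.

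Next I would put the commutator into kernel form. Since $\Phi(\lambda^{-1}D)u=g_\lambda*u$, a direct computation cancels the diagonal and gives
$$\bigl[\Phi(\lambda^{-1}D),h\bigr]f(x)=\int_{\RR^d} g_\lambda(x-y)\bigl(h(y)-h(x)\bigr)f(y)\,dy.$$
The crucial step is then to write $h(y)-h(x)=\int_0^1 \nabla h\bigl(x+t(y-x)\bigr)\cdot(y-x)\,dt$ by the fundamental theorem of calculus and to perform the change of variables $z=x-y$. The factor $(y-x)=-z$ combines with the kernel through the scaling identity $z\,g_\lambda(z)=\lambda^{-1}G_\lambda(z)$, where $G(z):=z\,g(z)$ and $G_\lambda(z):=\lambda^d G(\lambda z)$. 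This is precisely where the negative power of $\lambda$ is produced, turning the commutator into
$$\bigl[\Phi(\lambda^{-1}D),h\bigr]f(x)=-\lambda^{-1}\int_{\RR^d}\!\!\int_0^1 G_\lambda(z)\cdot\nabla h(x-tz)\,f(x-z)\,dt\,dz.$$

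Finally I would close the estimate by elementary functional inequalities. The last display exhibits the commutator as a superposition, over $t\in[0,1]$ and $z\in\RR^d$, of the translates $\nabla h(\cdot-tz)\,f(\cdot-z)$ weighted by $\lambda^{-1}|G_\lambda(z)|$; applying the Minkowski integral inequality to bring the $L^r_x$–norm inside the $dt\,dz$ integration, then H\"older's inequality in the split $1/r=1/p+1/q$ together with the translation invariance of the Lebesgue norms, yields
$$\bigl\|\bigl[\Phi(\lambda^{-1}D),h\bigr]f\bigr\|_{L^r}\le \lambda^{-1}\,\|\nabla h\|_{L^p}\,\|f\|_{L^q}\int_{\RR^d}|G_\lambda(z)|\,dz,$$
and the remaining integral equals $\|G\|_{L^1}=\||\cdot|\,\hat\Phi\|_{L^1}$ by the scale invariance of the $L^1$–norm, giving the claimed constant $C=\|(1+|\cdot|)\hat\Phi\|_{L^1}$. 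I expect no deep obstacle here: the only genuinely delicate points are bookkeeping ones, namely fixing the normalisation so that the rescaling $z\,g_\lambda(z)=\lambda^{-1}G_\lambda(z)$ is exact, and verifying that the hypothesis on $\hat\Phi$ indeed forces $|\cdot|\,g\in L^1$ so that $\|G\|_{L^1}<\infty$; beyond these, the argument is purely the convolution-plus-Taylor scheme above.
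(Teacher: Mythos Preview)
The paper does not supply its own proof of this lemma; it simply records it as a classical estimate and refers to \cite[Lemma~2.97]{BCD}. Your argument---writing $\Phi(\lambda^{-1}D)$ as convolution against the rescaled kernel $g_\lambda$, expanding $h(y)-h(x)$ by the fundamental theorem of calculus, extracting $\lambda^{-1}$ via the identity $z\,g_\lambda(z)=\lambda^{-1}G_\lambda(z)$, and closing with Minkowski plus H\"older---is correct and is exactly the proof given in that reference.
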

\vspace{0.5cm}
\noindent
{\bf Acknowledgment} 
The authors express their sincere appreciation to Professor C. Liu and Professor A. Zarnescu for constructive
suggestions and discussions. The work proceeded substantially both at the the Department of
Mathematics of the Penn State University and the Basque Center for Applied Mathematics. We thank deeply these institutions for their generous support and for providing a stimulating environment. 

\noindent
The first author has been partially supported by the NSF (grants DMS-1714401 and DMS-1412005). The second author has been supported by the Basque Government through the BERC 2018-2021 program and by Spanish Ministry of Economy and Competitiveness MINECO through BCAM Severo Ochoa excellence accreditation SEV-2013-0323 and through project MTM2017-82184-R funded by (AEI/FEDER, UE) and acronym "DESFLU. 


{

}
\end{document}